\documentclass{jocg}
\usepackage[textwidth=6.6in]{geometry} 
\pagestyle{plain}

\usepackage{enumitem}
\usepackage{amsmath, amssymb, amsthm}

\usepackage{mathtools}
\usepackage{geometry}
\usepackage{array}
\usepackage{hyperref}
\usepackage[square,sort,comma,numbers]{natbib}

\usepackage{color}
\definecolor{darkred}{RGB}{100,0,0}
\definecolor{darkgreen}{RGB}{0,100,0}
\definecolor{darkblue}{RGB}{0,0,150}
\definecolor{citecol}{RGB}{30,80,150}
\definecolor{tabcol}{RGB}{200,230,255}

\usepackage{colortbl}

\usepackage{hyperref}
\hypersetup{colorlinks=true, linkcolor=darkred, citecolor=citecol, urlcolor=darkblue}
\usepackage{url}

\allowdisplaybreaks
\newtheorem{theorem}{Theorem}[section]
\newtheorem{corollary}[theorem]{Corollary}
\newtheorem{prop}[theorem]{Proposition}
\newtheorem{lemma}[theorem]{Lemma}
\newtheorem{definition}[theorem]{Definition}
\theoremstyle{remark}

\newtheorem{remark}[theorem]{Remark}

\newcommand{\R}{\mathbb{R}}
\newcommand{\N}{\mathbb{N}}

\newcommand{\X}{\mathbb{X}}
\newcommand{\Y}{\mathbb{Y}}

\newcommand{\W}{\mathbb{W}}

\newcommand{\DD}{\mathcal{D}}

\renewcommand{\P}{\mathbb{P}}

\newcommand{\RR}{\mathcal{R}}

\newcommand{\KK}{\mathcal{K}}

\newcommand{\CC}{\mathcal{C}}

\newcommand{\dd}{\mathrm{d}}

\newcommand{\p}[1]{\left(#1 \right)}

\DeclareMathOperator*{\id}{id}

\newcommand{\upperdiag}{\R^2_>}
\newcommand{\dgm}{D}
\DeclareMathOperator{\dgmmap}{dgm}  
\newcommand{\rD}{\mathrm{r}}
\DeclareMathOperator{\Pers}{Pers}  
\DeclareMathOperator{\pers}{pers} 
\DeclareMathOperator{\Lip}{Lip}  
\DeclareMathOperator{\diam}{diam}  
\newcommand{\ones}{\mathbf{1}}
\newcommand{\defeq}{\vcentcolon =}

\numberwithin{equation}{section}

\begin{document}
\raggedbottom

\title{\MakeUppercase{On the choice of weight functions for linear representations of persistence diagrams}}

\author{Vincent Divol  \thanks{\affil{Inria Saclay and Universit\'e Paris-Sud},
          \email{vincent.divol@inria.fr}}\ \&\ Wolfgang Polonik \thanks{\affil{University of California, Davis}, \email{wpolonik@ucdavis.edu} }
}

\date{\today}

\maketitle

\begin{abstract}
Persistence diagrams are efficient descriptors of the topology of a point cloud. As they do not naturally belong to a Hilbert space, standard statistical methods cannot be directly applied to them. Instead, feature maps (or representations) are commonly used for the analysis. A large class of feature maps, which we call \emph{linear}, depends on some weight functions, the choice of which is a critical issue. An important criterion to choose a weight function is to ensure stability of the feature maps with respect to Wasserstein distances on diagrams. We improve known results on the stability of such maps, and extend it to general weight functions. We also address the choice of the weight function by considering an asymptotic setting; assume that $\X_n$ is an i.i.d.\ sample from a density on $[0,1]^d$. For the \v Cech and Rips filtrations, we characterize the weight functions for which the corresponding feature maps converge as $n$ approaches infinity, and by doing so, we prove laws of large numbers for the total persistences of such diagrams.  Those two approaches (stability and convergence) lead to the same simple heuristic for tuning weight functions: if the data lies near a $d$-dimensional manifold, then a sensible choice of weight function is the persistence to the power $\alpha$ with $\alpha \geq d$.
\end{abstract}

\section{Introduction}

Topological data analysis, or TDA (see \cite{chazal2017introduction} for a survey) is a recent field at the intersection of computational geometry, statistics and probability theory that has been successfully applied to various scientific areas, including biology \cite{yao2009topological}, chemistry \cite{nakamura2015persistent}, material science \cite{lee2017quantifying} or the study of time series \cite{seversky2016time}. It consists of an array of techniques aimed at understanding the topology of a $d$-dimensional manifold based on an approximating point cloud $\X$. For instance, clustering can be seen as the estimation of the connected components of a given manifold. Persistence diagrams are one of the tools used most often in TDA. They are efficient descriptors of the topology of a point cloud, consisting in a multiset $\dgm$  of points in $\upperdiag \defeq \{\rD=(r_1,r_2)\in \R^2, r_1<r_2\}$ (see Section \ref{sec:background} for a more precise definition). The space $\mathcal{D}$ of persistence diagrams is not naturally endowed with a Hilbert or Banach space structure, making statistical inference rather awkward. A common scheme to overcome this issue is to use a \emph{representation} or \emph{feature map} $\Phi : \mathcal{D} \to \mathcal{B}$, where $\mathcal{B}$ is some Banach space: classical machine learning techniques are then applied to $\Phi(\dgm)$ instead of $\dgm$, where it is assumed that an entire set (or sample) of persistence diagrams is observed. A natural way to create such feature maps is to consider a function $\phi : \upperdiag \to \mathcal{B}$ and to define
\begin{equation}\label{eq:linear_representation}
\Phi(\dgm) \defeq \sum_{\rD \in \dgm} \phi(\rD).
\end{equation}
A multiset can equivalently be seen as a measure. Therefore we let $\dgm$ also denote the measure $\sum_{\rD\in \dgm}\delta_{\rD}$ with $\delta_\rD$ denoting Dirac measure in $\rD$. With this notation, $\Phi(\dgm)$ is equal to $\dgm(\phi)$, the integration of $\phi$ against the measure $\dgm$. Representations as in \eqref{eq:linear_representation} are called \emph{linear} as they define linear maps from the space of finite signed measures to the Banach space $\mathcal{B}$. In the following, a representation will always be considered linear. Many linear representations exist in the literature, including persistence surfaces and its variants \cite{chen2015statistical,reininghaus2015stable,kusano2018kernel,adams2017persistence}, persistence silhouettes \cite{chazal2014stochastic} or accumulated persistence function \cite{biscio2016accumulated}. Notable non-linear representations inlude persistence landscapes \cite{bubenik2015statistical}, and sliced Wasserstein kernels \cite{carriere2017sliced}. 

In machine learning, a possible way to circumvent the so-called "curse of dimensionality" is to assume that the data lies near some low-dimensional manifold $M$. Under this assumption, the persistence diagram of the data set (built with the \v Cech filtration, for instance) is made of two different types of points: points $\dgm_{\text{true}}$ far away from the diagonal, which estimate the diagram of the manifold $M$, and points $\dgm_{\text{noise}}$ close to the diagonal, which are generally considered to be "topological noise" (see Figure \ref{fig:ex_pd}). 
This interpretation is a consequence of the stability theorem for persistence diagrams; see \cite{cohen2007stability}. If the relevant information lies in the structure of the manifold, then the topological noise indeed represents true noise, and representations of the form $\dgm(\phi)$ are bound to fail if $\dgm_{\text{noise}}(\phi)$ is dominating $\dgm_{\text{true}}(\phi)$. A way to avoid such behaviour is to weigh the points in diagrams by means of a weighting function $w:\upperdiag\to \R$. If $w$ is chosen properly, i.e.\ small enough when close to the diagonal, then one can hope that $\dgm_{\text{true}}(w\phi)$ can be separated from $\dgm_{\text{noise}}(w\phi)$. The weight functions $w$ are typically chosen as functions of the persistence $\pers(\rD) \defeq r_2-r_1$, a choice which will be made here also. Of course, it is not clear what "small enough" really means, and there are several ways to address the issue.

A first natural answer is to look at the problem from a stability point of view. Indeed, as data are intrinsically noisy, a statistical method has to be stable with respect to some metric in order to be meaningful. Standard metrics on the space of diagrams $\mathcal{D}$ are Wasserstein distances $W_p$, which under mild assumptions (see \cite{cohen2010lipschitz}) are known to be stable with respect to the data on which diagrams are built. The task therefore becomes to find representations $\dgm(w\phi)$ that are continuous with respect to some Wasserstein distance. Recent work in \cite{kusano2018kernel} shows that when sampling from a $d$-dimensional manifold, a weight function of the form $w(\rD)= \arctan(A \cdot \pers(\rD)^\alpha)$ with $\alpha >d+1$ ensures that a certain class of representations are Lipschitz. Our first contribution is to show that, for a general class of weight functions,  a choice of $\alpha >d$ is enough to make \emph{all} linear representations continuous (even H\"olderian of exponent $\alpha-d$).

\begin{figure}
\includegraphics[width= \textwidth]{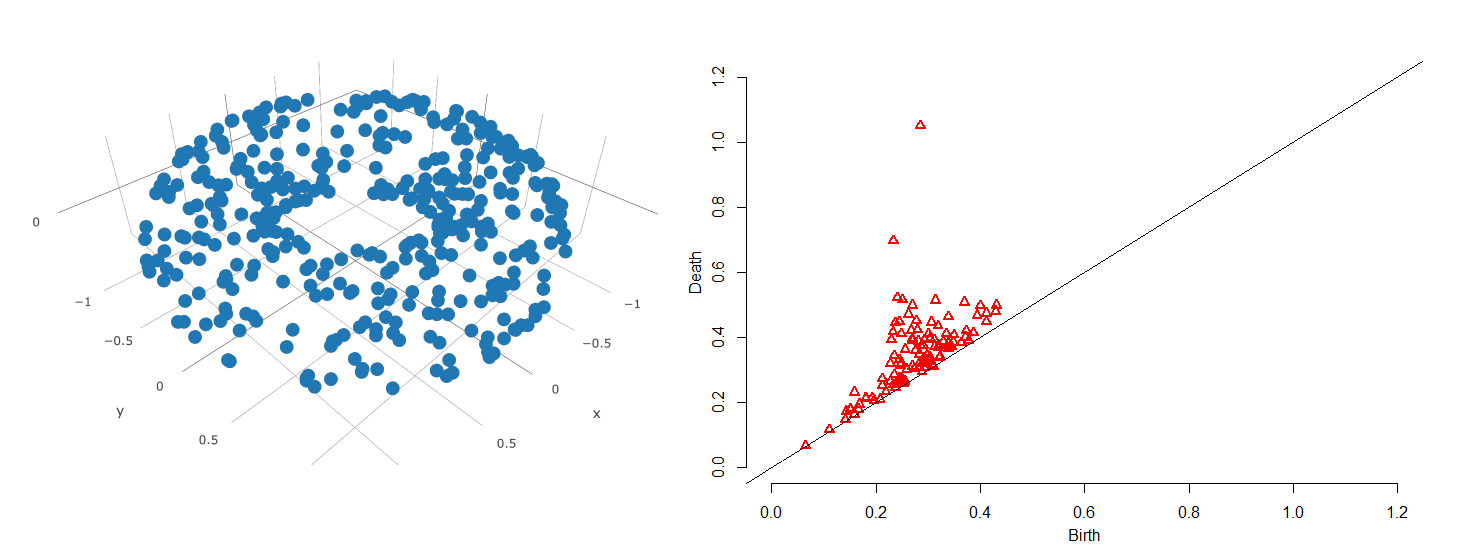}
\caption{The persistence diagram for homology of degree 1 of the Rips filtration of $2000$ i.i.d.\ points uniformly sampled on a torus. There are two distinct points in the diagram, corresponding to the two equivalence classes of one-dimensional holes of the torus.}
\label{fig:ex_pd}
\end{figure}

Our second (and main) contribution is to evaluate closeness to the diagonal from an asymptotic point of view. Assume that a diagram $\dgm_n$ is built on a data set of size $n$. For which weight functions is $\dgm_{n,\text{noise}}(w\phi)$ none-divergent? Of course, for this question to make sense, a model for the data set has to be specified. A simple model is given by a Poisson (or binomial) process $\X_n$ of intensity $n$ in a cube of dimension $d$. We denote the corresponding diagrams built on a filtration $\KK$ with respect to $q$-dimensional homology by $\dgmmap_q[\KK(\X_n)]$, with $\KK$ either the Rips or \v Cech filtration. A precise definition is given below in Section \ref{sec:background}. In this setting, there are no "true" topological features (other than the trivial topological feature of $[0,1]^d$ being connected), and thus the diagram based on the sampled data is uniquely made of topological noise. A first promising result is the vague convergence of the measure $\mu_q^n\defeq n^{-1}\dgmmap[\KK(n^{1/d}\X_n)]$, which was recently proven in \cite{hiraoka2018limit} for homogeneous Poisson processes in the cube and in \cite{goel2018strong} for binomial processes on manifolds. However, vague convergence is not enough for our purpose, as neither $\phi$ nor $w$ have good reasons to have compact support. Our main result, Theorem \ref{thm:main_cv} extends result of \cite{goel2018strong}, for processes on the cube, to a stronger convergence, allowing test functions to have both non-compact support (but to converge to $0$ near the diagonal) and to have polynomial growth. As a corollary of this general result, the convergence of the $\alpha$-th total persistence, which plays an important role in TDA, is shown. The $\alpha$-th total persistence is defined as $\Pers_\alpha(\dgm) \defeq \dgm(\pers^\alpha) = \sum_{\rD\in \dgm} \pers(\rD)^\alpha$.

\begin{theorem}\label{thm:main_thm} Let $\alpha>0$ and let $\kappa$ be a density on $[0,1]^d$ such that $0<\inf \kappa \leq \sup \kappa < \infty$. Let $\X_n$ be either a binomial process with parameters $n$ and $\kappa$ or a Poisson process of intensity $n\kappa$ in the cube $[0,1]^d$. Define $\dgmmap_q[\KK(\X_n)]$ to be the persistence diagram of $\X_n$ for $q$-dimensional homology, built with either the Rips or the \v Cech filtration. Then, with probability one, as $n \to \infty$
\begin{equation}
n^{\frac{\alpha}{d}-1}\Pers_\alpha(\dgmmap_q[\KK(\X_n)]) \to \mu_q^\kappa(\pers^\alpha) <\infty
\end{equation}
for some non-degenerate Radon measure $\mu_q^\kappa$ on $\upperdiag$.
\end{theorem}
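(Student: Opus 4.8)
The plan is to deduce Theorem~\ref{thm:main_thm} from the strong convergence result, Theorem~\ref{thm:main_cv}, by exploiting the scale equivariance of the Rips and \v Cech filtrations. First I would record the elementary scaling identity. For any $\lambda>0$, the complex built on $\lambda\X$ at scale $\lambda t$ coincides with the complex built on $\X$ at scale $t$, since a simplex enters either filtration exactly when a pairwise distance (Rips) or a circumradius (\v Cech) crosses a threshold, and rescaling $\X$ by $\lambda$ rescales every such threshold by $\lambda$. Hence the diagram is scale equivariant, $\dgmmap_q[\KK(\lambda\X)]=\lambda\cdot\dgmmap_q[\KK(\X)]$, where $\lambda$ acts coordinatewise by $(r_1,r_2)\mapsto(\lambda r_1,\lambda r_2)$; in particular $\pers(\lambda\rD)=\lambda\,\pers(\rD)$. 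Taking $\lambda=n^{1/d}$, so that each persistence is multiplied by $n^{1/d}$, gives
\begin{equation*}
\mu_q^n(\pers^\alpha)=n^{-1}\sum_{\rD\in\dgmmap_q[\KK(n^{1/d}\X_n)]}\pers(\rD)^\alpha=n^{\frac{\alpha}{d}-1}\Pers_\alpha(\dgmmap_q[\KK(\X_n)]).
\end{equation*}
Thus the rescaled total persistence is exactly $\mu_q^n(\pers^\alpha)$, and it remains to pass to the limit in $n$.

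Next I would verify that $\pers^\alpha$ is an admissible test function for Theorem~\ref{thm:main_cv}. It vanishes near the diagonal, since $\pers(\rD)^\alpha=(r_2-r_1)^\alpha\to0$ as $\rD$ tends to the diagonal (here $\alpha>0$), even though its support is not compact; and it has polynomial growth, as $\pers(\rD)^\alpha\le 2^{\alpha/2}|\rD|^\alpha$. Theorem~\ref{thm:main_cv} then applies and yields, with probability one, $\mu_q^n(\pers^\alpha)\to\mu_q^\kappa(\pers^\alpha)$, together with the finiteness $\mu_q^\kappa(\pers^\alpha)<\infty$ and the non-degeneracy of the limiting Radon measure $\mu_q^\kappa$; both of the latter are part of the conclusion of that theorem. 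Combined with the scaling identity, this is precisely the claim of Theorem~\ref{thm:main_thm}.

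Consequently the entire difficulty is relegated to Theorem~\ref{thm:main_cv}, and the hard part there is upgrading the known vague convergence of $\mu_q^n$ (from \cite{goel2018strong,hiraoka2018limit}) to convergence against the unbounded, non-compactly supported function $\pers^\alpha$. Vague convergence only controls integrals of continuous compactly supported functions, so the crux is a uniform integrability estimate in the two dangerous regions: one must show $\sup_n\mu_q^n(\pers^\alpha\,\mathbbm{1}\{\pers>R\})\to0$ as $R\to\infty$ and $\sup_n\mu_q^n(\pers^\alpha\,\mathbbm{1}\{\pers<\varepsilon\})\to0$ as $\varepsilon\to0$. The near-diagonal region is damped by the vanishing of $\pers^\alpha$ but still requires a bound on the number of low-persistence features, while the tail region requires a moment bound on the rare high-persistence features; I would expect both to follow from deterministic bounds on the number of $q$-simplices combined with concentration of the underlying point process, with the passage from convergence in expectation to the almost-sure statement handled by a variance estimate and the Borel--Cantelli lemma exploiting the independence built into the Poisson or binomial model.
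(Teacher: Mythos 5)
Your proposal is correct and takes essentially the same route as the paper, which proves Theorem \ref{thm:main_thm} as a direct corollary of Theorem \ref{thm:main_cv} (Remark (d) following that theorem) via exactly your scale-equivariance identity $\mu_q^n(\pers^\alpha)=n^{\frac{\alpha}{d}-1}\Pers_\alpha(\dgmmap_q[\KK(\X_n)])$ together with the observation that $\pers^\alpha$ lies in $C_{\mathrm{poly}}(\upperdiag)$, so that the almost-sure convergence, finiteness, and non-degeneracy (positive finite mass) all follow from that theorem. Your concluding sketch of the proof of Theorem \ref{thm:main_cv} also matches the paper's actual strategy (bounds on the number of diagram points, exponential tail estimates for high-persistence features, and concentration plus Borel--Cantelli for the almost-sure statement).
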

If $D_n \defeq \dgmmap_q[\KK(\X_n')]$ is built on a point cloud $\X_n'$ of size $n$ on a $d$-dimensional manifold, one can expect $D_{n,\text{noise}}$ to behave in a similar fashion to that of $\dgmmap_q[\KK(\X_n)]$ for $\X_n$ a $n$-sample on a $d$-dimensional cube (a manifold looking locally like a cube). Therefore, for $\alpha>0$, the quantity $\dgm_{n,\text{noise}}(\pers^\alpha)$ should be close to $\Pers_\alpha(\dgmmap[\KK(\X_n)]),$  and it can be expected to converge to $0$ if and only if the weight function $\pers^\alpha$ is such that $\alpha \geq d$. The same heuristic is found through both the approaches (stability and convergence): 
\textit{a weight function of the form $\pers^\alpha$ with $\alpha\geq d$ is sensible if the data lies near a $d$-dimensional object}.


Further properties of the process $(\dgmmap_q[\KK(\X_n)])_n$ are also shown, namely non-asymptotic rates of decays for the number of points in said diagrams, and the absolute continuity of the marginals of $\mu_q^\kappa$ with respect to the Lebesgue measure on $\R$.

\subsection{Related work}

Techniques used to derive the large sample results indicated above are closely related to the field of geometric probability, which is the study of geometric quantities arising naturally from point processes in $\R^d$. A classical result in this field, see \cite{steele1988growth}, proves the convergence of the total length of the minimum spanning tree built on $n$ i.i.d.\ points in the cube. This pioneering work can be seen as a $0$-dimensional special case of our general results about persistence diagrams built for homology of dimension $q$. This type of result has been extended to a large class of functionals in the works of J. E. Yukich and M. Penrose (see for instance \cite{mcgivney1999asymptotics,yukich2000asymptotics,penroseLLN} and \cite{penrose2003random} or \cite{yukich2006probability} for monographs on the subject). 

The study of higher dimensional properties of such processes is much more recent. Known results include convergence of Betti numbers for various models and under various asymptotics (see  \cite{Kahle2011,kahle2013limit,Yogeshwaran2017,bobrowski2017random}). The paper \cite{bobrowski2015maximally} finds bounds on the persistence of cycles in random complexes, and \cite{hiraoka2018limit} proves limit theorems for persistence diagrams built on homogeneous point processes. The latter is extended to non-homogeneous processes in \cite{trinh2017remark}, and to processes on manifolds in \cite{goel2018strong}. Note that our results constitute a natural extension of \cite{trinh2017remark}. In \cite{skraba2017randomly}, higher dimensional analogs of minimum spanning trees, called minimal spanning acycles, were introduced. Minimal spanning acycles exhibits strong links with persistence diagrams and our main theorem can be seen as a convergence result for weighted minimal spanning acycle on geometric random complexes. \cite{skraba2017randomly} also proves the convergence of the total $1$-persistence for Linial-Meshulam random complexes, which are models of random simplicial complexes of a combinatorial nature rather than a geometric nature.

\subsection{Notation}
\begin{itemize}
	\item[] $\|\cdot \|$\hspace*{0.88cm} Euclidean distance on $\R^d$.
	\item[] $\|\cdot \|_\infty$\hspace*{0.68cm} supremum-norm of a function.
	\item[] $B(x,r)$\hspace*{0.55cm} open ball of radius $r$ centered at $x$.
	\item[] diam$(\X)$\hspace*{0.35cm} diameter of a set $\X\subset \R^d$, defined as $\sup_{x,y\in \X}\|x-y\|$.
	\item[] $|\cdot |$ \hspace*{1.05cm}  total variation of a measure.
	\item[] $\#$\hspace*{1.4cm} cardinality of a set.
	\item[] $\Lip(f)$\hspace*{0.65cm} Lipschitz constant of a Lipschitz function $f$.
\end{itemize}

\paragraph{}The rest of the paper is organized as follows. In Section \ref{sec:background}, some background on persistent homology is briefly described. The stability results are then discussed in Section \ref{sec:stability} whereas the convergence results related to the asymptotic behavior of the sample-based linear representations are stated in Section \ref{sec:convergence}. Section \ref{sec:discussion} presents some discussion. Proofs can be found in Section \ref{sec:proofs}.

\section{Background on persistence diagrams}\label{sec:background}
Persistent homology deals with the evolution of homology through a sequence of topological spaces. We use the field of two elements $\mathbb{F}_2$ to build the homology groups. A filtration $\KK = (K^r)_{r \geq 0}$ is an increasing right-continuous sequence of topological spaces : $K^{r'}\subset K^r$ iff $r'\leq r$ and $K^r = \bigcap_{r'<r}K^{r'}$. For any $q\geq 0$, the inclusion of spaces give rise to linear maps between corresponding homology groups $H_q(K^r)$. The persistence diagram $\dgmmap_q[\KK]$ of the filtration is a succinct way to summarize the evolution of the homology groups. It is a multiset of points in $\upperdiag = \{\rD=(r_1,r_2)\in \R^2, r_1<r_2\}$\footnote{Persistence diagrams are in all generality multiset of points in $\{\rD=(r_1,r_2), -\infty \leq r_1<r_2\leq \infty\}$. We only consider diagrams which do not contain points "at infinity" throughout the paper.}, so that each point $\rD=(r_1,r_2)$ corresponds informally to a $q$-dimensional "hole" in the filtration $\KK$ that appears (or is born) at $r_1$ and disappears (or dies) at $r_2$. The persistence $\pers(\rD)$ of $\rD$ is defined as $r_2-r_1$ and is understood as the lifetime of the corresponding hole. Persistence diagrams are known to exist given mild assumptions on the filtration (see \cite[Section 3.8]{chazal2016structure}). Some basic descriptors of persistence diagrams include the $\alpha$-th total persistence of a diagram, defined as
\begin{equation}
\Pers_\alpha(\dgm) \defeq \dgm(\pers^\alpha) = \sum_{\rD \in \dgm} \pers(\rD)^\alpha,\ \alpha>0,
\end{equation}
and the persistent Betti numbers, defined as
\begin{equation}
\beta^{r,s}(\dgm) \defeq \dgm([0,r]\times (s,\infty)) = \sum_{\rD \in \dgm} \ones\{\rD \in [0,r]\times (s,\infty)\}, \ 0\leq r\leq s.
\end{equation}
Also, for $M\geq 0$, define
\begin{equation}
\Pers_\alpha(\dgm,M) \defeq \dgm(\pers^\alpha \ones\{ \pers\geq M\}).
\end{equation}
Given a subset $\X$ of a metric space $(\Y,d)$, standard constructions of filtrations are the \v Cech filtration $\CC(\X)=(C^r(\X))_{r\geq 0}$ and the Rips filtration $\RR(\X)=(R^r(\X))_{r\geq 0}$:
\begin{align}
C^r(\X) &= \left\{\mbox{finite } \sigma \subset \X, \ \bigcap_{x\in \sigma} B(x,r) \neq \emptyset\right\} \text{ and }\\
R^r(\X) &= \left\{\mbox{finite }\sigma \subset \X,\ \mbox{diam}(\sigma)\leq r \right\},
\end{align}
where the abstract simplicial complexes on the right are identified with their geometric realizations. The dimension of a simplex $\sigma$ is equal to $\#\sigma -1$. If $K$ is a simplicial complex, the set of its simplexes of dimension $q$ is denoted by $K_q$.

The space of persistence diagrams $\DD$ is the set of all finite multisets in $\upperdiag$. Wasserstein distances are standard distances on $\DD$. For $p\geq 1$, they are defined as:
\begin{equation}
 W_p(\dgm,\dgm') \defeq \min_{\gamma} \left(\sum_{\rD\in \dgm\cup \Delta} \|\rD-\gamma(\rD)\|^p\right)^{1/p},
\end{equation} 
where $\Delta$ is the diagonal of $\R^2$ and $\gamma : \dgm\cup \Delta \to \dgm'\cup \Delta$ is a bijection. The definition is extended to $p=\infty$ by 
\begin{equation}
W_\infty(\dgm,\dgm') = \min_{\gamma} \max_{\rD\in \dgm\cup \Delta} \|\rD-\gamma(\rD)\|,
\end{equation}
which is called the \emph{bottleneck distance}.

The use of Wasserstein distances is motivated by crucial stability properties they satisfy. Let $f,g:\X\to \R$ be two continuous functions on a triangulable space $\X$. Assuming that the persistence diagrams $\dgmmap_q[f]$ and $\dgmmap_q[g]$ of the filtrations defined by the sublevel sets of $f$ and $g$ exist and are finite (a condition called \emph{tameness}\footnote{Tameness holds under simple conditions, see \cite[Section 3.9]{chazal2016structure}, which we will always assume to hold in the following)}, the stability property of \cite[Main Theorem]{cohen2007stability} asserts that $W_\infty(\dgmmap_q[f],\dgmmap_q[g])\leq \|f-g\|_{\infty}$, i.e.\ the diagrams are stable with respect to the functions they are built with. The functions $f$ and $g$ have to be thought of as representing the data: for instance, if the \v Cech filtration is built on a data set $\X_n = \{X_1,\dots,X_n\}$, then $\dgmmap_q[\CC(\X_n)]=\dgmmap_q[f]$ where $f$ is the distance function to $\X_n$, i.e.\ $f(\cdot)=d(\cdot,\X_n)$. When $p<\infty$, similar stability results have been proved under more restrictive conditions on the ambient space $\X$, which we now detail. 

\begin{definition}\label{def:bounded_pers} A metric space $\X$ is said to have \emph{bounded $m$-th total persistence} if there exists a constant $C_{\X,m}$ such that for all tame 1-Lipschitz functions $f :\X \to \R,$ and, for all $q\geq 1$, $\Pers_m(\dgmmap_q[f])\leq C_{\X,m}$.
\end{definition}

This assumption holds, for instance, for a $d$-dimensional manifold $\X$ when $m >d$ with 
\begin{equation}
C_{\X,m} = \frac{m}{m-d}C_{\X}\, \diam(\X)^{m-d},
\end{equation}
$C_{\X}$ being a constant depending only on $\X$ (see \cite{cohen2010lipschitz}). The stability theorem for the $p$-th Wasserstein distances claims:

\begin{theorem}[Section 3 of \cite{cohen2010lipschitz}]\label{thm:wass_stab}
Let $\X$ be a compact triangulable metric space with  bounded $m$-th total persistence for some $m \geq 1$. Let $f,g :\X\to \R$ be two tame Lipschitz functions. Then, for $q\geq 0$,
\begin{equation}
W_p(\dgmmap_q[f],\dgmmap_q[g])\leq C_0^{\frac{1}{p}}\|f-g\|_{\infty}^{1-\frac{m}{p}},
\end{equation}
for $p\geq m$, where $C_0=C_{\X,m}\max\{\Lip(f)^{m},\Lip(g)^{m}\}$.
\end{theorem}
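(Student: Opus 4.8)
The plan is to combine the bottleneck stability theorem of \cite{cohen2007stability} with the bounded total persistence hypothesis of Definition \ref{def:bounded_pers}, bridged by a single elementary inequality. Write $\epsilon \defeq \|f-g\|_\infty$ and $L\defeq \max\{\Lip(f),\Lip(g)\}$. A first reduction is to assume $L\le 1$. Indeed, the sublevel filtration of $f/L$ at level $r$ equals that of $f$ at level $Lr$, so passing from $f$ to $f/L$ rescales every coordinate of $\dgmmap_q[f]$—hence every pairwise distance and every Wasserstein distance—by $1/L$, and rescales $\epsilon$ by $1/L$ as well. By homogeneity the general statement then follows from the $1$-Lipschitz case, and one checks that the powers of $L$ recombine into exactly $C_0=C_{\X,m}L^m$: if $W_p\le C^{1/p}\epsilon^{1-m/p}$ for $L\le1$, then in general $W_p = L\,W_p(\dgmmap_q[f/L],\dgmmap_q[g/L])\le (CL^m)^{1/p}\epsilon^{1-m/p}$. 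Under $L\le1$, Definition \ref{def:bounded_pers} gives $\Pers_m(\dgmmap_q[f])\le C_{\X,m}$ and likewise for $g$.

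For the core estimate, \cite{cohen2007stability} supplies a bijection $\gamma:\dgmmap_q[f]\cup\Delta\to\dgmmap_q[g]\cup\Delta$ with $\|\rD-\gamma(\rD)\|\le\epsilon$ for every $\rD$. Bounding $W_p$ by the cost of this particular matching and splitting each term as $\|\rD-\gamma(\rD)\|^p=\|\rD-\gamma(\rD)\|^{p-m}\|\rD-\gamma(\rD)\|^m\le\epsilon^{p-m}\|\rD-\gamma(\rD)\|^m$, which is valid since $p\ge m$, yields
\begin{equation}
W_p(\dgmmap_q[f],\dgmmap_q[g])^p \le \epsilon^{p-m}\sum_{\rD}\|\rD-\gamma(\rD)\|^m .
\end{equation}
The remaining task is to bound $\sum_\rD\|\rD-\gamma(\rD)\|^m$ by a constant multiple of $C_{\X,m}$, and this is where total persistence must enter. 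The key observation is that $\gamma$ can be chosen so that no off-diagonal point is matched to an off-diagonal point at a cost exceeding both of their distances to $\Delta$: whenever $\|\rD-\gamma(\rD)\|>\max\{\mathrm{dist}(\rD,\Delta),\mathrm{dist}(\gamma(\rD),\Delta)\}$, replacing that pair by two diagonal matches lowers both individual costs and so cannot increase the bottleneck cost, using that $\Delta$ has infinite multiplicity. After this modification every term obeys $\|\rD-\gamma(\rD)\|\le\max\{\mathrm{dist}(\rD,\Delta),\mathrm{dist}(\gamma(\rD),\Delta)\}$, hence $\|\rD-\gamma(\rD)\|^m\le\mathrm{dist}(\rD,\Delta)^m+\mathrm{dist}(\gamma(\rD),\Delta)^m$, while still $\|\rD-\gamma(\rD)\|\le\epsilon$.

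Summing this pointwise bound over the matching and using that $\mathrm{dist}(\cdot,\Delta)$ is proportional to $\pers$ gives $\sum_\rD\|\rD-\gamma(\rD)\|^m\lesssim \Pers_m(\dgmmap_q[f])+\Pers_m(\dgmmap_q[g])\le 2C_{\X,m}$; substituting into the displayed inequality produces $W_p\le(\mathrm{const}\cdot C_{\X,m})^{1/p}\epsilon^{1-m/p}$, which is the claim after folding the dimensional constant into $C_{\X,m}$ (its exact value depends only on the norm convention on $\upperdiag$). The main obstacle is the matching-modification step: one must verify that the bottleneck-optimal matching can be taken to respect $\|\rD-\gamma(\rD)\|\le\max\{\mathrm{dist}(\rD,\Delta),\mathrm{dist}(\gamma(\rD),\Delta)\}$ \emph{without} sacrificing the displacement bound $\le\epsilon$, since it is precisely this compatibility that couples the two a priori unrelated quantities—maximal displacement and total persistence—appearing in the final estimate. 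The power-splitting and the total-persistence bound are then routine.
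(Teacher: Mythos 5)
Your argument is correct and is essentially the original proof in \cite{cohen2010lipschitz}: the paper states Theorem \ref{thm:wass_stab} without proof, importing it from Section 3 of that reference, and your chain — bottleneck matching from \cite{cohen2007stability}, modification of the matching so that every pair satisfies $\|\rD-\gamma(\rD)\|\le\max\{\mathrm{dist}(\rD,\Delta),\mathrm{dist}(\gamma(\rD),\Delta)\}$ while preserving the $\le\epsilon$ displacement bound, the power splitting $\|\rD-\gamma(\rD)\|^p\le\epsilon^{p-m}\|\rD-\gamma(\rD)\|^m$, and the homogeneity reduction to $1$-Lipschitz functions — matches it step for step. One small improvement: no folding of constants into $C_{\X,m}$ is actually needed, since with the sup-norm convention $\mathrm{dist}(\rD,\Delta)=\pers(\rD)/2$ your bound gives $\sum_{\rD}\|\rD-\gamma(\rD)\|^m\le 2^{-m}\left(\Pers_m(\dgmmap_q[f])+\Pers_m(\dgmmap_q[g])\right)\le 2^{1-m}C_{\X,m}\le C_{\X,m}$ for $m\ge 1$ in the $1$-Lipschitz case, so the stated constant $C_0^{1/p}$ comes out exactly.
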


\section{Stability results for linear representations}\label{sec:stability}

In \cite[Corollary 12]{kusano2018kernel}, representations of diagrams are shown to be Lipschitz with respect to the $1$ Wasserstein distance for weight functions of the form $w(\rD) = \arctan(B \cdot \pers(\rD)^\alpha)$ with $\alpha> m+1,B>0$, provided the diagrams are built with the sublevels of functions defined on a space $\X$ having bounded $m$-th total persistence. The stability result is proved for a particular function $\phi:\upperdiag \to \mathcal{B}$ defined by $\phi(\rD) = K(\rD,\cdot)$, with $K$ a bounded Lipschitz kernel and $\mathcal{B}$ the associated RKHS (short for Reproducing Kernel Hilbert Space, see \cite{aronszajn1950theory} for a monograph on the subject). We present a generalization of the stability result to (i) general weight functions $w$, (ii) any bounded Lipschitz function $\phi,$ and (iii) we only require $\alpha >m$.

Consider weight functions $w:\upperdiag \to \R_+$ of the form $w(\rD) = \tilde{w}(\pers(\rD))$ for a differentiable function $\tilde{w}:\R_+ \to \R_+$ satisfying $\tilde{w}(0)=0$, and, for some $A>0$, $\alpha \geq 1$,
\begin{equation}\label{eq:grad_weight}
 \forall u\geq 0,\ |\tilde{w}'(u)| \leq Au^{\alpha-1}.
\end{equation}
Examples of such functions include $w(\rD) = \arctan(B \cdot \pers(\rD)^{\alpha})$ for  $B>0$ and $w(\rD)=\pers(\rD)^{\alpha}$. We denote the class of such weight functions by  $\W(\alpha,A)$. In contrast to \cite{kusano2018kernel}, the function $\phi$ does not necessarily take its values in a RKHS, but simply in a Banach space (so that its Bochner integral --see for instance \cite[Chapter 4]{diestel1984sequences}-- is well defined). 

\begin{theorem}\label{thm:main_stab}
Let $(\mathcal{B},\|\cdot \|_{\mathcal{B}})$ be a Banach space, and let $\phi: \upperdiag\to \mathcal{B}$ be a Lipschitz continuous function. Furthermore, for $w\in \W(\alpha,A)$ with $A>0,\, \alpha \geq 1,$ let $\Phi_w(\dgm) \defeq \dgm(w\phi),$ and for two persistence diagrams $\dgm_1$ and $\dgm_2$ let $G\{t\} \defeq \max\{\Pers_t(\dgm_1),\Pers_t(\dgm_2)\},\ t \ge 0$. Then, for $1\leq p \leq \infty,$ and $a\in [0,1]$ (and using the conventions $0/\infty = 0$ and $\infty/\infty = 1$), we have
\begin{align}
\|\Phi_w(\dgm_1)-\Phi_w(\dgm_2)\|_{\mathcal{B}} &\leq \Lip(\phi)\frac{A}{\alpha} \left(G\left\{p\frac{\alpha}{p-1}\right\}\right)^{1-\frac{1}{p}} W_p(\dgm_1,\dgm_2) \label{eq:ineq_stab_general}\\
& +\|\phi\|_\infty  A \left(2G\left\{p\frac{\alpha-a}{p-a}\right\}\right)^{1-\frac{a}{p}} W_p(\dgm_1,\dgm_2)^a. \nonumber
\end{align}
\end{theorem}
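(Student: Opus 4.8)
The plan is to reduce everything to the optimal matching. Fix a bijection $\gamma\colon \dgm_1\cup\Delta\to\dgm_2\cup\Delta$ realizing $W_p(\dgm_1,\dgm_2)$ (identity on all but finitely many diagonal points, as usual). Since $\tilde w(0)=0$, every diagonal point carries zero weight, so $\dgm(w\phi)$ is unchanged when summed over $\dgm\cup\Delta$ instead of $\dgm$; using that $\gamma$ is a bijection, I would write
\begin{equation*}
\Phi_w(\dgm_1)-\Phi_w(\dgm_2)=\sum_{\rD\in\dgm_1\cup\Delta}\big(w(\rD)\phi(\rD)-w(\gamma(\rD))\phi(\gamma(\rD))\big).
\end{equation*}
Writing $\rD'=\gamma(\rD)$, each term splits as
\begin{equation*}
w(\rD)\phi(\rD)-w(\rD')\phi(\rD')=w(\rD)\big(\phi(\rD)-\phi(\rD')\big)+\big(w(\rD)-w(\rD')\big)\phi(\rD'),
\end{equation*}
and I would bound the two resulting sums separately, the first producing the $\Lip(\phi)$-term and the second the $\|\phi\|_\infty$-term of the statement.

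The analytic core is a pair of pointwise estimates on the weight. Integrating $|\tilde w'(u)|\le Au^{\alpha-1}$ from $0$ gives $|w(\rD)|=|\tilde w(\pers(\rD))|\le\frac{A}{\alpha}\pers(\rD)^\alpha$, which together with $\|\phi(\rD)-\phi(\rD')\|_{\mathcal B}\le\Lip(\phi)\|\rD-\rD'\|$ controls the first summand. For the second summand the key is the interpolation inequality
\begin{equation*}
|\tilde w(s)-\tilde w(s')|\le A\,\max(s,s')^{\alpha-a}\,|s-s'|^{a},\qquad s,s'\ge 0,\ a\in[0,1],
\end{equation*}
which I would obtain by first bounding $|\tilde w(s)-\tilde w(s')|\le A\max(s,s')^{\alpha-1}|s-s'|$ from the gradient hypothesis, then trading the factor $|s-s'|^{1-a}\le\max(s,s')^{1-a}$ (legitimate since $0\le|s-s'|\le\max(s,s')$). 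Applied with $s=\pers(\rD)$, $s'=\pers(\rD')$ and combined with $|\pers(\rD)-\pers(\rD')|\le\sqrt2\,\|\rD-\rD'\|$ (the precise constant is immaterial and absorbed) and $\|\phi(\rD')\|_{\mathcal B}\le\|\phi\|_\infty$, this is exactly what produces the free exponent $a$ and the factor $\|\rD-\rD'\|^{a}$ needed to recover $W_p^{a}$.

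It then remains to sum and apply Hölder's inequality twice: with conjugate exponents $(p,\tfrac{p}{p-1})$ on the first sum and $(\tfrac{p}{a},\tfrac{p}{p-a})$ on the second. In both cases the factors $\|\rD-\gamma(\rD)\|$ reassemble into $\big(\sum_\rD\|\rD-\gamma(\rD)\|^{p}\big)^{1/p}=W_p(\dgm_1,\dgm_2)$, to the power $1$ and $a$ respectively, while the weight factors reassemble into total persistences. In the first sum diagonal points carry $\pers=0$, so $\sum_{\rD\in\dgm_1\cup\Delta}\pers(\rD)^{p\alpha/(p-1)}=\Pers_{p\alpha/(p-1)}(\dgm_1)\le G\{p\alpha/(p-1)\}$, giving the first term with no extra constant. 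The delicate point is the second sum, where $\max(\pers(\rD),\pers(\rD'))^{\alpha-a}$ is symmetric in the two diagrams: bounding $\max(x,y)^{\beta}\le x^{\beta}+y^{\beta}$ and using that $\rD\mapsto\gamma(\rD)$ sweeps out $\dgm_2\cup\Delta$ yields $\Pers_\beta(\dgm_1)+\Pers_\beta(\dgm_2)\le 2G\{\beta\}$ with $\beta=p(\alpha-a)/(p-a)$, which is the origin of the factor $2$ inside $G$. I expect the main obstacle to be precisely this last bookkeeping step: arranging the interpolation inequality so that it simultaneously delivers the $W_p^{a}$ factor and a \emph{symmetric} total-persistence factor, and verifying that features created or destroyed against the diagonal (where one of $\pers(\rD),\pers(\rD')$ vanishes) are absorbed by this symmetrization rather than breaking the telescoping. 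The boundary cases $p\in\{1,\infty\}$ and $a\in\{0,1\}$ are then handled by the stated conventions $0/\infty=0$ and $\infty/\infty=1$.
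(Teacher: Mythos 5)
Your proposal is correct and follows essentially the same route as the paper's proof: the same optimal matching $\gamma$ extended over $\dgm\cup\Delta$ (with $\tilde w(0)=0$ killing the diagonal), the same two-term decomposition (the paper phrases it via an intermediate measure $\tilde\mu=\sum_{\rD}w(\gamma(\rD))\delta_\rD$ and the Monge--Kantorovich distance, but the resulting sums coincide with yours up to swapping which factor is evaluated at $\rD$ versus $\gamma(\rD)$, which is immaterial since $G$ symmetrizes over both diagrams), the bound $w(\rD)\leq\frac{A}{\alpha}\pers(\rD)^\alpha$, the interpolation $|\tilde w(s)-\tilde w(s')|\leq A\max(s,s')^{\alpha-a}|s-s'|^a$ (the paper derives it from $x^\alpha-y^\alpha\leq\alpha(x-y)^ax^{\alpha-a}$, you from trading $|s-s'|^{1-a}\leq\max(s,s')^{1-a}$), and H\"older with exponents $\bigl(p/a,\,p/(p-a)\bigr)$ followed by $\max(x,y)^\beta\leq x^\beta+y^\beta$, which is exactly the origin of the factor $2$. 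The $\sqrt2$ you flag in $|\pers(\rD)-\pers(\rD')|\leq\sqrt2\,\|\rD-\rD'\|$ is silently taken as $1$ in the paper's own proof as well (it is exact for the $\ell^1$ matching cost), so this is a shared norm-convention slack, not a gap in your argument.
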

The quantity $G\{q\}$ can often be controlled. For instance, if the diagrams are built with Lipschitz continuous functions $f:\X\to \R,$ and $\X$ is a space having bounded $m$-th total persistence.
\begin{corollary}\label{cor:stab_bis}
Let $A>0,\, \alpha \geq 1, q\geq 0$ and consider a compact triangulable metric space $\X$ having bounded $m$-th total persistence for some $m \geq 1$. Suppose that $f,g : \X \to \R$ are two tame Lipschitz continuous functions, $w\in \W(\alpha,A)$, and $a\in [0,1]$. Then, for $m\leq p \leq \infty$ such that $\alpha \geq m+a\left(1-\frac{m}{p}\right) \geq 0$, if $C_0 = C_{\X,m}\max\{\Lip(f)^{m},\Lip(g)^{m}\}$ and $\ell$ is the maximum persistence in the two diagrams $\dgmmap_q[f], \dgmmap_q[g]$:

\begin{align}
&\|\Phi_w(\dgmmap_q[f])-\Phi_w(\dgmmap_q[g])\|_{\mathcal{B}}  \leq C_1 W_p(\dgmmap_q[f],\dgmmap_q[g]) +C_2 W_p(\dgmmap_q[f],\dgmmap_q[g])^a,\label{eq:ineq_main_stab}
\end{align}
where $C_1=\Lip(\phi) \frac{A}{\alpha}\ell^{\alpha-m\left(1-\frac{1}{p}\right)} C_0^{1-\frac{1}{p}}$ and $C_2 = \|\phi\|_\infty A\ell^{\alpha-m-a\left(1-\frac{m}{p}\right)}(2C_0)^{1-\frac{a}{p}}$.
\end{corollary}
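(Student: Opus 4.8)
The plan is to read off the corollary directly from Theorem~\ref{thm:main_stab}, the only work being to control the two quantities $G\{t_1\}$ and $G\{t_2\}$ that appear there, where $t_1 = p\frac{\alpha}{p-1}$ is the exponent in the first summand of \eqref{eq:ineq_stab_general} and $t_2 = p\frac{\alpha-a}{p-a}$ the one in the second. Everything reduces to bounding $G\{t_1\}$ and $G\{t_2\}$ in terms of $C_0$ and the maximal persistence $\ell$, and then matching the resulting powers of $\ell$ against the stated constants $C_1$ and $C_2$.

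First I would record the base estimate at exponent $m$ by a rescaling argument. Writing $L = \Lip(f)$, the function $f/L$ is $1$-Lipschitz and tame, and its sublevel set at parameter $t$ equals $\{f \leq Lt\}$; hence $\dgmmap_q[f/L]$ is the image of $\dgmmap_q[f]$ under scaling by $1/L$, so that $\Pers_m(\dgmmap_q[f]) = L^m \Pers_m(\dgmmap_q[f/L]) \leq \Lip(f)^m C_{\X,m}$ by Definition~\ref{def:bounded_pers}. The same bound holds for $g$, and taking the maximum gives $G\{m\} \leq C_0$. Next I would propagate this to higher exponents: for any $t \geq m$, bounding each persistence by $\ell$ yields $\pers(\rD)^t \leq \ell^{t-m}\pers(\rD)^m$, and summing over either diagram gives $\Pers_t(\dgm) \leq \ell^{t-m}\Pers_m(\dgm)$; therefore $G\{t\} \leq \ell^{t-m} G\{m\} \leq \ell^{t-m} C_0$ whenever $t \geq m$.

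It then remains to verify that both exponents are admissible, and this is the one genuinely non-mechanical point. Since $a \in [0,1]$ and $p \geq m$, the hypothesis $\alpha \geq m + a(1-\tfrac{m}{p})$ already forces $\alpha \geq m$, so $t_1 = \frac{p}{p-1}\alpha \geq \alpha \geq m$. For the second exponent, a short rearrangement shows that $t_2 \geq m$ is equivalent to $p(\alpha - m) \geq a(p-m)$, i.e.\ to $\alpha \geq m + a(1-\tfrac{m}{p})$, which is exactly the standing assumption; this is the place where the precise shape of the hypothesis is used, and it is what keeps the second summand finite.

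Finally I would substitute these bounds into \eqref{eq:ineq_stab_general} and simplify the exponents of $\ell$. For the first term, $(G\{t_1\})^{1-1/p} \leq \ell^{(t_1-m)(1-1/p)} C_0^{1-1/p}$, and the direct computation $(t_1-m)(1-\tfrac{1}{p}) = \alpha - m(1-\tfrac{1}{p})$ recovers $C_1$. For the second term, $(2 G\{t_2\})^{1-a/p} \leq (2C_0)^{1-a/p}\,\ell^{(t_2-m)(1-a/p)}$ with $(t_2-m)(1-\tfrac{a}{p}) = \alpha - m - a(1-\tfrac{m}{p})$, recovering $C_2$. The degenerate cases (such as $p = \infty$, or $p = m = 1$ where $t_1$ formally blows up but enters only through the zero power $G\{t_1\}^{0}$) are absorbed by the conventions $0/\infty = 0$ and $\infty/\infty = 1$ already in force in Theorem~\ref{thm:main_stab}. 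Apart from the exponent bookkeeping of the preceding paragraph, the argument is entirely routine.
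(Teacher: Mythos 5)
Your proposal is correct and follows the same route as the paper, whose one-line proof invokes exactly your two ingredients: the bounded $m$-th total persistence definition (with the Lipschitz rescaling absorbed into $C_0$) and the inequality $G\{t_1+t_2\}\leq \ell^{t_1}G\{t_2\}$. You merely make explicit the bookkeeping the paper leaves implicit, including the correct observation that $t_2 = p\frac{\alpha-a}{p-a}\geq m$ is precisely equivalent to the standing hypothesis $\alpha \geq m+a\left(1-\frac{m}{p}\right)$.
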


If $\alpha > m+1$ and $p=\infty$, then the result is similar to Theorem 3.3 in \cite{kusano2018kernel}. However, Corollary \ref{cor:stab_bis} implies that the representations are still continuous (actually H\"older continuous) when $\alpha \in (m,m+1]$, and this is the novelty of the result. Indeed, for such an $\alpha$, one can always chose $a$ small enough so that the stability result \eqref{eq:ineq_main_stab} holds. The proofs of Theorem \ref{thm:main_stab} and Corollary \ref{cor:stab_bis} consist of adaptations of similar proofs in \cite{kusano2018kernel}. They can be found in Section \ref{sec:proofs}.

\begin{remark} $\;$
%
{\em (a)} One cannot expect to obtain an inequality of the form \eqref{eq:ineq_stab_general} without quantities $G\{t\}$ (or other quantities depending on the diagrams) appearing on the right-hand side. For instance, in the case $p=\infty$, it is clear that adding an arbitrary number of points near the diagonal will not change the bottleneck distance between the diagram, whereas the distance between representations can become arbitrarily large.\\[5pt]
%
{\em (b)} Laws of large numbers stated in the next section (see also Theorem \ref{thm:main_thm} already stated in the introduction), show that Theorem \ref{thm:main_stab} is optimal: take $w=\pers^\alpha$ and $\phi \equiv 1$. If $\X_n$ is a sample on the $d$-dimensional cube $[0,1]^d$ (which has bounded $m$-th total persistence for $m>d$), then $\Phi_w(\dgmmap_q[\CC(\X_n)])=\Pers_\alpha(\dgmmap_q[\CC(\X_n)])$. The quantity $\Pers_\alpha(\dgmmap_q[\CC(\X_n)])$ does not converge to $0$ for $\alpha \leq d$ (it even diverges if $\alpha <d$), whereas the bottleneck distance between $\dgmmap_q[\CC(\X_n)]$ and the empty diagram does converge to $0$.
\end{remark}

The following corollary to the stability result also is a contribution to the asymptotic study of next section. It presents rates of convergence of representations in a random setting. Let $\X_n=\{X_1,\dots,X_n\}$ be a $n$-sample of i.i.d.\ points from a distribution on some manifold $\X$. We are interested in the convergence of representations $\Phi_w(\dgmmap_q[\CC(\X_n)])$ to the representations $\Phi_w(\dgmmap_q[\CC(\X)])$. The nerve theorem asserts that for any subspace $\X' \subset \X$, $\dgmmap_q[\CC(\X')] = \dgmmap_q[f]$ where $f(x)$ is the distance from $x\in \X$ to $\X'$. We obtain the following corollary, whose proof is found in Section \ref{sec:proofs}:

\begin{corollary}\label{cor:first_asymptotics} Consider a $d$-dimensional compact Riemannian manifold $\X$, and let $\X_n=\{X_1,\dots,X_n\}$ be a $n$-sample of i.i.d.\ points from a distribution having a density $\kappa$ with respect to the $d$-dimensional Hausdorff measure on $\X$. Assume that $0<\inf \kappa \leq \sup \kappa <\infty$. Let $w\in \W(\alpha,A)$ for some $A>0, \alpha>d,$ and let $\phi: \upperdiag \to \mathcal{B}$ be a Lipschitz function. Then, for $q\geq 0$, and for $n$ large enough,
\begin{equation}
E\left[ \|\Phi_w(\dgmmap_q[\CC(\X_n)])-\Phi_w(\dgmmap_q[\CC(\X)])\|_{\mathcal{B}}\right] \leq C\|\phi\|_\infty \frac{ \alpha }{\alpha-d} \left(\frac{\ln n}{n}\right)^{\frac{\alpha}{d}-1},
\end{equation}
where $C$ is a constant depending on $\X, A$ and the density $\kappa$.
\end{corollary}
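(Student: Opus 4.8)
The plan is to reduce the statement to a bound on the weighted total persistence of the topological \emph{noise} in $\dgmmap_q[\CC(\X_n)]$, and then to control its expectation through the speed at which $\X_n$ fills $\X$.

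First I would use the nerve theorem recalled just before the statement to write $\dgmmap_q[\CC(\X_n)]=\dgmmap_q[f_n]$ and $\dgmmap_q[\CC(\X)]=\dgmmap_q[f]$, where $f_n=d(\cdot,\X_n)$ and $f=d(\cdot,\X)$ are $1$-Lipschitz. Since $\X_n\subset\X$, setting $\delta_n\defeq\sup_{x\in\X}d(x,\X_n)$ (the Hausdorff distance between $\X_n$ and $\X$) gives $\|f_n-f\|_\infty=\delta_n$. The bottleneck stability theorem recalled in Section~\ref{sec:background} then yields $W_\infty(\dgmmap_q[f_n],\dgmmap_q[f])\le\delta_n$; fixing an optimal matching $\gamma$, I would split $\dgmmap_q[\CC(\X_n)]$ into the points matched to genuine points of $\dgmmap_q[\CC(\X)]$ (the finitely many ``true'' features, each displaced by at most $\delta_n$) and the points matched to the diagonal, whose persistence is therefore at most $2\delta_n$; the latter form the noise $\dgm_{\mathrm{noise}}$.

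Next I would bound the two families. For $w=\tilde w\circ\pers\in\W(\alpha,A)$, integrating $\tilde w(0)=0$ and $|\tilde w'(u)|\le Au^{\alpha-1}$ yields $w(\rD)\le\frac{A}{\alpha}\pers(\rD)^\alpha$, so in $\mathcal{B}$-norm the noise contributes at most $\|\phi\|_\infty\frac{A}{\alpha}\Pers_\alpha(\dgm_{\mathrm{noise}})$. To estimate the latter I would use the covering bound behind Definition~\ref{def:bounded_pers}: the number $N(t)\defeq\#\{\rD\in\dgm_{\mathrm{noise}}:\pers(\rD)>t\}$ is dominated by the corresponding count for the whole diagram, which is at most $C_\X t^{-d}$ for a constant $C_\X$ depending only on $\X$ (this is exactly the estimate producing $C_{\X,m}=\frac{m}{m-d}C_\X\diam(\X)^{m-d}$). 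Since every noise point has persistence at most $2\delta_n$, the layer-cake formula gives
\[
\Pers_\alpha(\dgm_{\mathrm{noise}})=\int_0^{2\delta_n}\alpha t^{\alpha-1}N(t)\,dt\le\alpha C_\X\int_0^{2\delta_n}t^{\alpha-d-1}\,dt=\frac{\alpha}{\alpha-d}C_\X(2\delta_n)^{\alpha-d},
\]
where the hypothesis $\alpha>d$ is precisely what makes the integral converge and produces the factor $\frac{\alpha}{\alpha-d}$. The true features, finite in number and each moved by at most $\delta_n$, contribute only $O(\delta_n)$ by the Lipschitz continuity of $\phi$ and of $\tilde w$ on the bounded persistence range, a term of lower order.

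It then remains to take expectations, for which the one probabilistic input is the moment estimate $E[\delta_n^{\,s}]\le C(\ln n/n)^{s/d}$, valid for $n$ large enough when $\inf\kappa>0$ on a compact $d$-manifold; this is where the restriction ``$n$ large enough'' and the logarithmic factor originate. Applying it with $s=\alpha-d$ converts the display above into a bound of order $\frac{\alpha}{\alpha-d}\|\phi\|_\infty(\ln n/n)^{(\alpha-d)/d}=\frac{\alpha}{\alpha-d}\|\phi\|_\infty(\ln n/n)^{\alpha/d-1}$, the constant $C$ absorbing $A$, $C_\X$ and $2^{\alpha-d}$, which is the asserted inequality (one could instead feed $W_\infty\le\delta_n$ into Corollary~\ref{cor:stab_bis} with $p=\infty$ to get continuity, but bounding the noise directly is what yields the sharp constant $\frac{\alpha}{\alpha-d}$). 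The main obstacle I anticipate is geometric and twofold: establishing the scale-uniform bound $N(t)\le C_\X t^{-d}$ simultaneously over all thresholds for the random diagram, and proving the sharp moment bound $E[\delta_n^{\,s}]\lesssim(\ln n/n)^{s/d}$, which rests on covering $\X$ by $O(n/\ln n)$ balls together with a union bound (the source of $\ln n$) and the assumption $\inf\kappa>0$ ensuring each ball is hit; checking that the true-feature term is genuinely dominated is the last point requiring care.
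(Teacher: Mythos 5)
Your argument is correct in substance and reaches the stated bound, but it takes a more direct route than the paper. The paper's proof simply specializes Theorem \ref{thm:main_stab} with $p=\infty$, $f_n=d(\cdot,\X_n)$ and $f$ the null function, producing the two-term bound $\Lip(\phi)\frac{A}{\alpha}G\{\alpha\}W_\infty+2\|\phi\|_\infty A\,G\{\alpha-a\}W_\infty^a$; it then bounds $G$ by the inequality $\Pers_m(\dgmmap_q[f_n])\le mC_\X\|f_n\|_\infty^{m-d}/(m-d)$ quoted from \cite{cohen2010lipschitz}, minimizes the second term over $a\in\bigl(0,\min(1,\alpha-d)\bigr)$ (this is where the factor $\alpha/(\alpha-d)$ comes from), and finishes with the same Hausdorff-distance moment bound $E[\|f_n\|_\infty^\beta]\le c(\ln n/n)^{\beta/d}$ from \cite{cuevas2009set} that you invoke. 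You bypass Theorem \ref{thm:main_stab} entirely: from $w\le\frac{A}{\alpha}\pers^\alpha$ you bound the noise contribution by $\|\phi\|_\infty\frac{A}{\alpha}\Pers_\alpha$, and you re-derive the total-persistence estimate from the counting bound $N(t)\le C_\X t^{-d}$ by the layer-cake formula with cutoff $2\delta_n$ from bottleneck stability. Your layer-cake computation is precisely the mechanism behind the \cite{cohen2010lipschitz} inequality the paper cites, so the external inputs coincide; what your route buys is the absence of the $a$-optimization and of any $\Lip(\phi)$ transport term (the paper must separately argue that its term $\Lip(\phi)\frac{\alpha AC_\X}{\alpha-d}\|f_n\|_\infty^{\alpha-d+1}$ is negligible for $n$ large). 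Note also that the counting bound is a deterministic statement valid for every tame $1$-Lipschitz function, so it holds pathwise for the random diagram; the ``scale-uniformity over thresholds for the random diagram'' you list as a main obstacle is not actually an issue.

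One step of yours needs correction, and it is exactly the point you flagged as requiring care. You claim the ``true features'', each displaced by at most $\delta_n$, contribute $O(\delta_n)$, ``a term of lower order''. In general this is false: $O(\delta_n)=O\bigl((\ln n/n)^{1/d}\bigr)$ \emph{dominates} $(\ln n/n)^{(\alpha-d)/d}$ whenever $\alpha>d+1$, and such a term would introduce $\Lip(\phi)$ into the bound, which is absent from the statement. The step is rescued by an observation you did not make: under the nerve identification used here, $\dgmmap_q[\CC(\X)]=\dgmmap_q[f]$ with $f\equiv 0$, whose sublevel filtration jumps from $\emptyset$ to $\X$ at time $0$, so every homology class is essential, i.e.\ a point at infinity, which the paper's conventions exclude from diagrams. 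Hence $\dgmmap_q[\CC(\X)]$ has no finite points, $\Phi_w(\dgmmap_q[\CC(\X)])=0$, the true-feature part of your matching is empty, and \emph{all} of $\dgmmap_q[\CC(\X_n)]$ is noise of persistence at most $2\delta_n$. This is why the paper writes its proof with $f$ the null function. With that substitution your argument is complete, and your resulting constant $\frac{A}{\alpha}\cdot\frac{\alpha}{\alpha-d}C_\X 2^{\alpha-d}c=\frac{AC_\X 2^{\alpha-d}c}{\alpha-d}$ is consistent with (indeed slightly smaller than) the stated $C\|\phi\|_\infty\frac{\alpha}{\alpha-d}$ form.
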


\section{Convergence of total persistence}\label{sec:convergence}

Consider again the i.i.d.\ model: let $\X_n=\{X_1,\dots,X_n\}$ be i.i.d.\ observations of density $\kappa$ with respect to the $d$-dimensional Hausdorff measure on some $d$-dimensional manifold $\X$. The general question we are addressing in this section is the convergence of the observed diagrams $\dgmmap_q[\KK(\X_n)]$ to $\dgmmap_q[\KK(\X)]$, with $\KK$ either the Rips or the \v Cech filtration. Of course, the question has already been answered in some sense. For instance, Theorem \ref{thm:wass_stab} affirms that the sequence of observed diagrams will always converge to $\dgmmap_q[\KK(\X)]$ for the bottleneck distance,  if $\KK$ is the \v Cech filtration\footnote{A similar result states that the bottleneck distance between two diagrams, each built with the Rips filtration on some space, is controlled by the Hausdorff distance between the two spaces (see Theorem 3.1 \cite{chazal2009gromov}). As the Rips filtration, contrary to the \v Cech filtration, cannot be seen as the filtration of the sublevel sets of some function, this stability is not a consequence of Theorem \ref{thm:wass_stab}.}. However, this is not informative with respect to the convergence of the representations introduced in the previous section, which is related to a weak convergence of measure: For which functions $\phi$ does $\dgmmap_q[\KK(\X_n)](\phi)$ converge to $\dgmmap_q[\KK(\X)](\phi)$? 

The stability theorem for the bottleneck distance asserts that, for $\varepsilon >0$ small enough, and for $n$ large enough, $\dgmmap_q[\KK(\X_n)]$ can be decomposed into two separate sets of points: a set of fixed size $\dgmmap_{\text{true},q}[\KK(\X_n)]$ that is $\varepsilon$-close to points in $\dgmmap_q[\KK(\X)]$ and the remaining part of the diagram, $\dgmmap_{\text{noise},q}[\KK(\X_n)]$, usually consisting of a large number of points, which have persistence smaller than $\varepsilon$, i.e.\ these are the points that lie close to the diagonal. A Taylor expansion of $\phi$ shows that the difference between $\dgmmap_q[\KK(\X_n)](\phi)$ and $\dgmmap_q[\KK(\X)](\phi)$ is of the order of $\dgmmap_{\text{noise},q}[\KK(\X_n)](\pers^\alpha)$ for some $\alpha\geq 0$. The latter quantities are therefore of utmost interest to achieve our goal. Instead of directly studying $\dgmmap_{\text{noise},q}[\KK(\X_n)](\pers^\alpha)$ for $\X_n$ on a $d$-dimensional manifold, we focus on the study of the quantity $\dgmmap_{q}[\KK(\X_n)](\pers^\alpha)$ for $\X_n$ in a cube $[0,1]^d$.

Contributions to the study of quantities of the form $\dgmmap_q[\KK(\mathbb{S}_n)](\phi)$ have been made in  \cite{hiraoka2018limit}, where $\mathbb{S}_n$ is considered to be the restriction of a stationary process to a box of volume $n$ in $\R^d$. Specifically, \cite{hiraoka2018limit} shows the vague convergence of the rescaled diagram $n^{-1}\dgmmap_q[\KK(\mathbb{S}_n)]$ to some Radon measure $\mu_q$. The two recent papers \cite{trinh2017remark,goel2018strong} prove that a similar convergence actually holds for $\mathbb{S}_n$ a binomial sample on a manifold. However, vague convergence deals with continuous functions $\phi$ with compact support, whereas we are interested in functions of the type $\pers^\alpha$, which are not even bounded. Our contributions to the matter consists in proving, for samples on the cube $[0,1]^d$, a stronger convergence, allowing test functions to have non-compact support and polynomial growth. As a gentle introduction to the formalism used later, we first recall some known results from geometric probability on the study of Betti numbers, and we also detail relevant results of \cite{hiraoka2018limit, trinh2017remark, goel2018strong}.

\subsection{Prior work}
In the following, $\KK$ refers to either the \v Cech or the Rips filtration. Let $\kappa$ be a density on $[0,1]^d$ such that:
 \begin{equation}\label{eq:kappa}
0 <  \inf \kappa \leq  \sup\kappa < \infty.
\end{equation}
Note that the cube $[0,1]^d$ could be replaced by any compact convex body (i.e.\ the boundary of an open bounded convex set). However, the proofs (especially geometric arguments of Section \ref{sec:proof_bound}) become much more involved in this greater generality. To keep the main ideas clear, we therefore restrict ourselves to the case of the cube. We indicate, however, when challenges arise in the more general setting.

Let $(X_i)_{i\geq 1}$ be a sequence of i.i.d.\ random variables sampled from density $\kappa$ and let $(N_i)_{i\geq 1}$ be an independent sequence of Poisson variables with parameter $i$. In the following $\X_n$ denotes either $\{X_1,\dots,X_n\}$, a binomial process of intensity $\kappa$ and of size $n$, or $\{X_1,\dots,X_{N_n}\}$, a Poisson process of intensity $n\kappa$. The fact that the binomial and Poisson processes are built in this fashion is not important for weak laws of large numbers (only the law of the variables is of interest), but it is crucial for strong laws of large numbers to make sense.

The persistent Betti numbers $\beta^{r,s}(\dgmmap_q[\KK]) = \dgmmap_q[\KK(\X_n)](\ones_{[r,\infty)\times [s,\infty)]})$ are denoted more succinctly by $\beta^{r,s}_q(\KK)$. When $r=s$, we use the notation $\beta^r_q(\KK)$.

\begin{theorem}[Theorem 1.4 in \cite{trinh2017remark}]\label{thm:betti_strong}
Let $r>0$ and $q\geq 0$. Then, with probability one, $n^{-1}\beta_q^r(\KK(n^{1/d}\X_n))$ converges to some constant. The convergence also holds in expectation.
\end{theorem}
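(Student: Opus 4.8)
The plan is to prove the result by the standard two-part strategy of geometric probability: first establish convergence of the rescaled expectation $n^{-1}\E[\beta_q^r(\KK(n^{1/d}\X_n))]$ to a limiting constant, and then upgrade this to almost sure (and, a posteriori, $L^1$) convergence by a concentration argument. Throughout I would work with the rescaled process $\Y_n \defeq n^{1/d}\X_n$, which lives in the box $Q_n \defeq [0,n^{1/d}]^d$ of volume $n$ and has local intensity $x \mapsto \kappa(x/n^{1/d})$, bounded above and below by hypothesis; since $\beta_q^r$ is unchanged by the global dilation, this reformulation is harmless. I would also first treat the Poisson model and then transfer to the binomial model by de-Poissonization, exploiting the coupling $\X_n = \{X_1,\dots,X_{N_n}\}$ versus $\{X_1,\dots,X_n\}$: since $|N_n-n| = O(\sqrt{n\log\log n})$ almost surely and inserting or deleting one point perturbs $\beta_q^r$ only by a controlled amount, the two models must share the same limit.

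The decisive elementary input is a \emph{bounded-increment} estimate: if one filtered complex is obtained from another by inserting a single vertex $v$, then $\beta_q^r$ changes by at most the number of $q$- and $(q+1)$-simplices incident to $v$, hence by at most $\binom{D_v}{q}+\binom{D_v}{q+1}$, where $D_v$ counts the points within the connection radius of $v$ (radius $r$ for Rips, $2r$ for \v Cech). This rests on the classical fact that adding a single simplex changes every persistent Betti number by at most one, applied simplex by simplex. Under a process of bounded intensity $D_v$ is stochastically dominated by a Poisson variable with bounded parameter, so these increments have all moments finite; this is precisely what powers both the de-Poissonization and the concentration step.

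For convergence of the mean I would run a near-additivity (subadditive/superadditive) argument. Partitioning $Q_n$ into subcubes and building $\KK$ separately on each, the only simplices of the global complex left unaccounted for are those straddling a subcube wall; by the increment estimate the resulting defect is controlled by the number of points lying within the connection radius of the walls, whose expectation is of order the total wall area and hence $o(n)$ once the subcubes are large. When $\kappa$ is constant this produces an approximately superadditive functional to which a standard subadditivity lemma applies, giving $n^{-1}\E[\beta_q^r]\to \tilde c(\lambda,r)$ for homogeneous intensity $\lambda$. For general $\kappa$ I would then homogenize: partition $[0,1]^d$ into small cubes on which $\kappa$ is nearly constant, approximate the contribution of each by the corresponding homogeneous limit, and send the mesh to zero to obtain $n^{-1}\E[\beta_q^r]\to \int_{[0,1]^d}\tilde c(\kappa(x),r)\,dx$, using continuity of $\tilde c$ in its intensity argument for the passage to the limit.

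Finally, for almost sure convergence I would prove concentration around the mean: in the binomial case via the Azuma--Hoeffding inequality for the Doob martingale obtained by revealing $X_1,\dots,X_n$ one at a time, and in the Poisson case via the Poincar\'e (Efron--Stein type) inequality for Poisson functionals. The one genuine difficulty, which I expect to be the main obstacle, is that the martingale differences are \emph{not uniformly bounded}: they are controlled only by the random local degrees $\binom{D_v}{q}+\binom{D_v}{q+1}$. Overcoming this requires combining a truncation (discarding the rare event that some local degree is atypically large, whose probability decays thanks to the Poisson tails of $D_v$) with a bounded-difference inequality on the complementary event, so as to reach a tail bound of the form $\P(|\beta_q^r - \E\beta_q^r| > \varepsilon n) \le \exp(-c\,n^{\gamma})$ that is summable in $n$. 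Borel--Cantelli then yields almost sure convergence to the constant identified above, while the accompanying bound $\mathrm{Var}(n^{-1}\beta_q^r) = o(1)$ furnishes the uniform integrability needed for convergence in expectation.
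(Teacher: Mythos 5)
Your proposal is correct and follows essentially the same route as the proof the paper relies on: the paper does not reprove this result but cites Trinh (2017), explicitly noting that the argument rests on the geometric increment bound (Lemma \ref{lem:geomineq}), which is exactly your bounded-increment estimate, combined with homogenization over small cubes of nearly constant density and a concentration/de-Poissonization step to upgrade convergence of expectations to almost sure convergence. Your substitution of an approximate-subadditivity argument for the nearly-additive ergodic-theorem treatment of the homogeneous case, and your explicit truncation-plus-Azuma handling of the unbounded martingale differences, are minor variations on the same strategy.
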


The theorem is originally stated with the \v{C}ech filtration but its generalization to the Rips filtration (or even to more general filtrations considered in \cite{hiraoka2018limit}) is straightforward. The proof of this theorem is based on a simple, yet useful geometric lemma, which still holds for the persistent Betti numbers, as proven in \cite{hiraoka2018limit}. Recall that for $j\geq 0$, $K_j$ denotes the $j$-skeleton of the simplicial complex $K$.

\begin{lemma}[Lemma 2.11 in \cite{hiraoka2018limit}]\label{lem:geomineq} Let $\X\subset \mathbb{Y}$ be two subsets of $\R^d$. Then 
\begin{equation}
|\beta_q^{r,s}(\KK(\X))- \beta_q^{r,s}(\KK(\mathbb{Y}))| \leq \sum_{j=q}^{q+1} |K_j^s(\mathbb{Y})\backslash K_j^s(\X)|.
\end{equation}
\end{lemma}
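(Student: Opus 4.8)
The plan is to reduce the statement to a one-simplex-at-a-time bookkeeping argument. First I would rewrite the persistent Betti number in terms of cycle and boundary groups: writing $Z_q(K)$ for the group of $q$-cycles and $B_q(K)$ for the group of $q$-boundaries of a complex $K$ (homology being taken over $\mathbb{F}_2$), the number $\beta_q^{r,s}(\KK(\X))$ is the rank of the map $H_q(K^r(\X)) \to H_q(K^s(\X))$ induced by the inclusion $K^r(\X)\subseteq K^s(\X)$, and a short linear-algebra computation gives
\[
\beta_q^{r,s}(\KK(\X)) = \dim Z_q(K^r(\X)) - \dim\big(Z_q(K^r(\X))\cap B_q(K^s(\X))\big).
\]
The virtue of this formula is that the first term only sees $q$-simplices present at level $r$, while the second only additionally sees $(q+1)$-simplices present at level $s$.

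Next I would set up an interpolation between the two filtrations. Since $\X\subseteq\Y$, for every level $t$ one has $K^t(\X)\subseteq K^t(\Y)$, and a simplex lies in $K_j^t(\Y)\setminus K_j^t(\X)$ exactly when it has a vertex outside $\X$, a condition independent of $t$; hence $K_j^r(\Y)\setminus K_j^r(\X)\subseteq K_j^s(\Y)\setminus K_j^s(\X)$. I would then enumerate the finitely many simplices of $K^s(\Y)\setminus K^s(\X)$ in order of increasing dimension (any linear extension of the face order) and add them one at a time, each time inserting the simplex into the level-$s$ complex and, when its filtration value is at most $r$, also into the level-$r$ complex. Because of the inclusion above, after all insertions the pair $(K^r,K^s)$ has been transformed from $(K^r(\X),K^s(\X))$ into $(K^r(\Y),K^s(\Y))$, and the dimension ordering (together with the fact that faces of a low-value simplex again have low value) guarantees that every intermediate pair is a genuine pair of nested simplicial complexes.

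The heart of the argument is then the claim that inserting a single $j$-simplex changes $\beta_q^{r,s}$ by at most $1$ in absolute value, and leaves it unchanged unless $j\in\{q,q+1\}$. This follows directly from the rank formula: $Z_q(K^r)$ can only change when a $q$-simplex enters the level-$r$ complex, and then its dimension moves by $0$ or $1$; $B_q(K^s)$ can only change when a $(q+1)$-simplex enters the level-$s$ complex, again moving by $0$ or $1$; in either case the intersection term, and hence $\beta_q^{r,s}$, shifts by at most one, whereas a simplex of any other dimension touches neither term. Summing these unit increments along the interpolation and applying the triangle inequality bounds the total change of $\beta_q^{r,s}$ by the number of inserted simplices of dimension $q$ or $q+1$, which is precisely $\sum_{j=q}^{q+1}|K_j^s(\Y)\setminus K_j^s(\X)|$.

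I expect the main obstacle to be bookkeeping rather than conceptual: one must check that the interpolation always produces valid nested complexes, and that the per-simplex case analysis correctly separates the situation where the new simplex enters both $K^r$ and $K^s$ from the one where it enters only $K^s$. Establishing the rank formula over $\mathbb{F}_2$ and verifying that only dimensions $q$ and $q+1$ contribute are the two places needing a little care, but both are elementary once the cycle/boundary description is in place.
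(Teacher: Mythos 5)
Your proof is correct, but note that the paper does not prove this lemma at all: it is imported verbatim as Lemma~2.11 of \cite{hiraoka2018limit}, so the relevant comparison is with that source. There the argument starts from the same rank formula,
\begin{equation*}
\beta_q^{r,s} \;=\; \dim Z_q(K^r) \;-\; \dim\bigl(Z_q(K^r)\cap B_q(K^s)\bigr),
\end{equation*}
but concludes in one shot: the change in the first term is bounded by $\dim C_q(K^r(\Y))-\dim C_q(K^r(\X)) = |K_q^r(\Y)\setminus K_q^r(\X)| \le |K_q^s(\Y)\setminus K_q^s(\X)|$, and the change in the second by $|K_{q+1}^s(\Y)\setminus K_{q+1}^s(\X)|$, since $B_q(K^s)$ is generated by boundaries of $(q+1)$-simplices. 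You instead interpolate simplex by simplex and show each insertion of a $q$- or $(q+1)$-simplex moves $\beta_q^{r,s}$ by at most one, all other dimensions being inert. The two routes are the same linear algebra; yours is slightly longer but has the merit of making explicit the points the one-shot version leaves implicit --- that membership in $K^t(\Y)\setminus K^t(\X)$ is independent of $t$ (a vertex lies outside $\X$), that faces enter the Čech/Rips filtration no later than cofaces so every intermediate pair is a genuine nested pair of complexes, and the case split between a simplex entering both $K^r$ and $K^s$ versus only $K^s$. All the individual claims in your per-insertion analysis check out: a $q$-simplex entering only the level-$s$ complex changes neither $Z_q(K^r)$ nor $B_q(K^s)$, a $q$-simplex entering both gives $\Delta\beta\in\{0,1\}$, and a $(q+1)$-simplex gives $\Delta\beta\in\{-1,0\}$, so summing and applying the triangle inequality yields exactly the stated bound.
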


In \cite{hiraoka2018limit}, this lemma was used to prove the convergence of expectations of diagrams of stationary point processes. As indicated in \cite[Remark 2.4]{goel2018strong}, this lemma can also be used to prove the convergence of the expectations of diagrams for non-homogeneous binomial processes on manifold.
Let $C_c(\upperdiag)$ be the set of functions $\phi:\upperdiag \to \R$ with compact support. We say that a sequence $(\mu_n)_{n\geq 0}$ of measures on $\upperdiag$ \emph{converges $C_c$-vaguely} to $\mu$ if $\forall \phi\in C_c(\upperdiag)$, $\mu_n(\phi) \xrightarrow[n\to \infty]{} \mu(\phi)$. Note that this does not include the function $\phi = 1$ or the function $\phi=\pers$. Vague convergence is denoted by $ \xrightarrow[]{v_c}$. Set $\mu_n = n^{-1}\dgmmap[\KK(n^{1/d}\X_n)]$. Remark 2.4 in \cite{goel2018strong} implies the following theorem.

\begin{theorem}[Remark 2.4 in \cite{goel2018strong} and Theorem 1.5 in \cite{hiraoka2018limit}]\label{thm:duy}
Let $\kappa$ be a probability density function on a $d$-dimensional compact $C^1$ manifold $\X$, with $\int_{\X} \kappa^j(z)d z <\infty$ for $j\in \N$. Then, for $q\geq 0$, there exists a unique Radon measure $\mu^\kappa_q$ on $\upperdiag$ such that
\begin{equation}
E[\mu_n] \xrightarrow[n\to \infty]{v_c} \mu_q^\kappa
\end{equation}
and
\begin{equation}\label{eq:ergodic}
\mu_n \xrightarrow[n\to \infty]{v_c} \mu_q^\kappa \mbox{ a.s..}
\end{equation}
The measure $\mu_q^\kappa$ is called the persistence diagram of intensity $\kappa$ for the filtration $\KK$.
\end{theorem}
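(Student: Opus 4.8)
The plan is to deduce the $C_c$-vague convergence from the convergence of the rescaled persistent Betti numbers $n^{-1}\beta_q^{r,s}(\KK(n^{1/d}\X_n))$, in the spirit of proving weak convergence of measures through their distribution functions. Given $\phi \in C_c(\upperdiag)$, its support is a compact subset of $\upperdiag$, hence bounded and bounded away from the diagonal, so only points with persistence in some range $[\delta,M]$ and bounded birth contribute. First I would approximate $\phi$ uniformly by a step function constant on the cells of a fine grid, and express the $\mu_n$-mass of each grid rectangle by inclusion--exclusion from the masses of up-right quadrants $[r,\infty)\times(s,\infty)$, i.e.\ from the persistent Betti numbers $\beta_q^{r,s}$. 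Choosing the grid lines at continuity points of the (monotone, hence a.e.\ continuous) limiting Betti function keeps the approximation error uniformly small, and both asserted convergences reduce to showing $n^{-1}\beta_q^{r,s}(\KK(n^{1/d}\X_n)) \to b(r,s)$, in expectation and almost surely, for a limit function $b$; for $r=s$ this is exactly the content of Theorem \ref{thm:betti_strong}, and the general case is its natural two-parameter extension.

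For the convergence of $n^{-1}E[\beta_q^{r,s}]$, I would first treat the homogeneous case. For a unit-intensity Poisson process on a box of volume $n$ in $\R^d$, Lemma \ref{lem:geomineq} controls how $\beta_q^{r,s}$ changes when the box is subdivided (the defect is bounded by the number of $q$- and $(q+1)$-simplices straddling the new boundaries), which yields an approximate subadditivity and hence a limit $\hat\beta(r,s)$, as in \cite{hiraoka2018limit}. To pass to a non-homogeneous density $\kappa$ on the manifold $\X$, I would localize: cover $\X$ by charts, partition it into small cells $Q_i$ on which $\kappa$ is nearly constant and the manifold is $C^1$-close to its tangent plane, so that after rescaling by $n^{1/d}$ the restriction of the process to each cell is well approximated by a homogeneous Poisson process of intensity $\kappa(z_i)$ on a large Euclidean box. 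Lemma \ref{lem:geomineq} again bounds the cost of decoupling the cells by the number of simplices meeting cell boundaries, which is negligible relative to $n$ once the mesh tends to $0$ after $n\to\infty$. A linear rescaling of the persistence coordinates reduces an intensity $c$ to intensity one, and summing the per-cell contributions exhibits $\mu_q^\kappa$ as an integral over $\X$ of the scaled homogeneous measure weighted by the local density; the moment hypotheses $\int_\X \kappa^j < \infty$ guarantee that the local simplex counts, and hence these contributions, are integrable and that the Riemann sum over cells converges.

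For the almost sure statement I would use concentration around the mean. Lemma \ref{lem:geomineq} shows that inserting or deleting a single point changes $\beta_q^{r,s}$ by at most the number of $q$- and $(q+1)$-simplices incident to that point at scale $s$, a bounded-difference property. For the binomial process this feeds the Azuma--McDiarmid inequality applied to the Doob martingale obtained by revealing $X_1,\dots,X_n$ one at a time; for the Poisson process it feeds a Poisson concentration inequality of the same bounded-difference type. Either way one obtains an exponential tail bound for $n^{-1}|\beta_q^{r,s} - E[\beta_q^{r,s}]|$, summable in $n$, so Borel--Cantelli gives $n^{-1}(\beta_q^{r,s} - E[\beta_q^{r,s}]) \to 0$ almost surely; combined with the previous paragraph this yields $n^{-1}\beta_q^{r,s} \to b(r,s)$ a.s. The measure $\mu_q^\kappa$ is then uniquely determined, since it is prescribed on the generating family of up-right quadrants by $b(r,s)$.

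The hard part will be the localization/gluing step in the expectation argument. One must simultaneously show that the per-cell process is uniformly close to a homogeneous Poisson process --- which is where the $C^1$ geometry of $\X$ and the control of the rescaled density enter, and where the cube is genuinely easier than a general convex body or manifold --- and that the total boundary defect supplied by Lemma \ref{lem:geomineq}, once divided by $n$, vanishes uniformly in the coupled limits of mesh size and $n$. Making this interchange of limits rigorous requires a priori moment bounds on local simplex counts, and it is precisely here that the hypothesis $\int_\X \kappa^j(z)\,dz < \infty$ for all $j$ is used.
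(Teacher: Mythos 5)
Note first that the paper itself does not prove Theorem \ref{thm:duy}: it is imported verbatim from \cite{hiraoka2018limit} (stationary case, via Theorem 1.5) and \cite{goel2018strong} (Remark 2.4, binomial processes on manifolds), so your sketch must be measured against those proofs. Your reduction of $C_c$-vague convergence to two-parameter persistent Betti numbers via inclusion--exclusion on grid rectangles (with grid lines at continuity points of the limit), and your treatment of the expectation --- approximate subadditivity from Lemma \ref{lem:geomineq} in the homogeneous case, then localization into cells where $\kappa$ is nearly constant, with boundary defects again controlled by Lemma \ref{lem:geomineq} and integrability supplied by $\int_\X \kappa^j <\infty$ --- is faithful to the actual route in \cite{hiraoka2018limit,trinh2017remark,goel2018strong}, and is correct in outline.

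The genuine gap is in your almost-sure step. You call the add-one estimate of Lemma \ref{lem:geomineq} ``a bounded-difference property'' and feed it to Azuma--McDiarmid, but McDiarmid requires a \emph{deterministic, worst-case} bound on the change of $F$ when one coordinate is resampled, uniformly over all configurations in the support. The number of $q$- and $(q+1)$-simplices incident to a point at scale $s n^{-1/d}$ is random and, in the worst configuration (all $n$ points clustered in one ball, which has positive probability under any density with full support), is of order $n^{q+1}$; even for $q=0$ it is of order $n$. Plugging these worst-case constants $c_i$ into McDiarmid gives a tail of the form $\exp\bigl(-c\,t^2/(n\cdot n^{2(q+1)})\bigr)$, which is vacuous at the relevant scale $t\asymp \varepsilon n$, so no summable bound and no Borel--Cantelli conclusion follows. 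The same objection applies to your Poissonized variant. The known repairs are exactly what the cited works (and this paper, for its own Lemma \ref{lem:concentration_bound}) use: replace McDiarmid by a moment version of Efron--Stein, e.g.\ Theorem 2 of \cite{boucheron2005moment}, exploiting that the \emph{random} add-one cost has all moments bounded uniformly in $n$ (local point counts in balls of radius $O(n^{-1/d})$ have $O(1)$ means), to get $E\bigl[|\beta^{r,s}_q - E[\beta^{r,s}_q]|^p\bigr] = O(n^{p/2+\epsilon})$ and then Markov plus Borel--Cantelli; alternatively truncate on the maximal degree. Note also that for the stationary Poisson setting \cite{hiraoka2018limit} does not use concentration at all but a multiparameter subadditive ergodic theorem. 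With the concentration step corrected along these lines, your argument goes through and coincides with the literature's proof.
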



\subsection{Main results}

A function $\phi:\upperdiag \to \R$ is said to vanish on the diagonal if 
\begin{equation}
\lim_{\varepsilon\to 0} \sup_{\pers(\rD)\leq \varepsilon} |\phi(\rD)| = 0.
\end{equation}
Denote by $C_0(\upperdiag)$ the set of all such functions. The weight functions of Section \ref{sec:stability} all lie in $C_0(\upperdiag)$.  We say that a function $\phi:\upperdiag \to \R$ has polynomial growth if there exist two constants $A,\alpha>0$, such that
\begin{equation}\label{polyGrowth}
|\phi(\rD)| \leq A\left(1+ \pers(\rD)^\alpha\right).
\end{equation}

The class $C_{\mathrm{poly}}(\upperdiag)$ of functions in $C_0(\upperdiag)$ with polynomial growth constitutes a reasonable class of functions $w\cdot \phi$ one may want to build a representation with. Our goal is to extend the convergence of Theorem \ref{thm:duy} to this larger class of functions. Convergence of measures $\mu_n$ to $\mu$ with respect to $C_{\mathrm{poly}}(\upperdiag)$, i.e.\ $\forall \phi\in C_{\mathrm{poly}}(\upperdiag)$, $\mu_n(\phi) \xrightarrow[n\to \infty]{} \mu(\phi)$,  is denoted by $ \xrightarrow[]{v_p}$. Note that this class of functions is standard: it is for instance known to characterize $p$-th Wasserstein convergence in optimal transport (see \cite[Theorem 6.9]{villani2008optimal}). 

\begin{theorem}\label{thm:main_cv}
(i) For $q\geq 0$, there exists a unique Radon measure $\mu_q^\kappa$ such that $E[\mu_q^n] \xrightarrow[n\to \infty]{v_p} \mu_q^\kappa$ and, with probability one, $\mu_q^n \xrightarrow[n\to \infty]{v_p} \mu_q^\kappa$. The measure $\mu_q^\kappa$ is called the $q$-th persistence diagram of intensity $\kappa$ for the filtration $\KK$. It does not depend on whether $\X_n$ is a Poisson or a binomial process, and is of positive finite mass.\\[5pt]
(ii) The convergence also holds pointwise for the $L_p$ distance: for all $\phi \in C_{\mathrm{poly}}(\upperdiag)$, and for all $p\geq 1$, $\mu_q^n(\phi) \xrightarrow[n\to \infty]{L_p} \mu_q^{\kappa}(\phi)$. In particular, $|\mu_q^\kappa(\phi)|<\infty$.
%
\end{theorem}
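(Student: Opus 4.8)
The strategy is to bootstrap the $C_c$-vague convergence supplied by Theorem~\ref{thm:duy} up to the class $C_{\mathrm{poly}}(\upperdiag)$ through a uniform integrability argument. Fix $\phi \in C_{\mathrm{poly}}(\upperdiag)$ and write $\eta(\varepsilon) \defeq \sup_{\pers(\rD)\le \varepsilon}|\phi(\rD)|$, which tends to $0$ as $\varepsilon \to 0$ since $\phi$ vanishes on the diagonal, together with the growth bound $|\phi(\rD)| \le A(1+\pers(\rD)^\alpha)$. For $0<\varepsilon<M$ and $R>0$ I would cut the upper half-plane into the compact window $K \defeq \{\rD : 0\le r_1,\ \varepsilon \le \pers(\rD),\ r_2\le M,\ r_1 \le R\}$, the near-diagonal strip $\{\pers(\rD)<\varepsilon\}$, the large-persistence region $\{\pers(\rD)>M\}$, and the large-birth region $\{r_1>R\}$. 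Multiplying $\phi$ by a continuous cutoff supported in $K$ reduces the contribution of the window to an application of Theorem~\ref{thm:duy}; the whole theorem then rests on showing that the three remaining contributions are negligible, uniformly in $n$.

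The heart of the matter is therefore a family of uniform moment estimates for the random measures $\mu_q^n$, which I would establish using the geometric inequality of Lemma~\ref{lem:geomineq} together with Palm theory (the Mecke formula for the Poisson case). The three bounds I aim for are: a uniform total-mass bound $\sup_n E[\mu_q^n(\upperdiag)]<\infty$; a uniform persistence-tail bound $\sup_n E[\mu_q^n(\pers^\beta)]<\infty$ for some $\beta>\alpha$; and a uniform birth-tail bound $\lim_{R\to\infty}\sup_n E[\mu_q^n(\{r_1>R\})]=0$. Granting these, the near-diagonal term is controlled by $\eta(\varepsilon)\,\sup_n E[\mu_q^n(\upperdiag)]$, the large-persistence term by $M^{-(\beta-\alpha)}\sup_n E[\mu_q^n(\pers^\beta)]$, and the large-birth term by the birth-tail bound, so that sending $n\to\infty$ first (via Theorem~\ref{thm:duy} on $K$) and then $\varepsilon\to 0$, $M\to\infty$, $R\to\infty$ yields $E[\mu_q^n(\phi)]\to\mu_q^\kappa(\phi)$ and simultaneously the finiteness $|\mu_q^\kappa(\phi)|<\infty$ and positivity of the mass. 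Uniqueness of $\mu_q^\kappa$ and its independence of the Poisson/binomial choice follow from the corresponding statements in Theorem~\ref{thm:duy} on compact sets, since a Radon measure on $\upperdiag$ is determined by its action on $C_c(\upperdiag)$.

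For the almost sure statement in~(i), I would upgrade the a.s.\ $C_c$-vague convergence already given by Theorem~\ref{thm:duy} to $v_p$ convergence by proving a.s.\ analogues of the three tail estimates. The natural route is to combine the expectation bounds above with a concentration inequality: since changing one sample point (or one Poisson arrival) alters $\mu_q^n(\pers^\beta \mathbf 1\{\pers>M\})$ and the companion tail functionals by a controlled amount, a bounded-difference / Efron--Stein argument gives variance or exponential concentration, and Borel--Cantelli then forces the a.s.\ $\limsup_n$ of each tail functional to match its (small) expectation. Feeding this into the same cutoff decomposition yields $\mu_q^n(\phi)\to\mu_q^\kappa(\phi)$ a.s. For part~(ii), the uniform moment bound $\sup_n E[\mu_q^n(\pers^\beta)]<\infty$ with $\beta>\alpha$ makes the family $(\mu_q^n(\phi))_n$ uniformly integrable in every $L_p$; combined with the a.s.\ convergence from~(i), this upgrades the convergence to $L_p$ for every $p\ge 1$. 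The Poisson/binomial equivalence throughout is handled by the coupling already fixed in the model, controlling $N_n-n$ by its concentration around $n$.

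The \textbf{main obstacle} is the uniform persistence-tail and birth-tail bounds, i.e.\ showing that points of large persistence or large birth are exponentially rare uniformly in $n$. The geometric content is that a $q$-cycle with rescaled persistence or birth exceeding $t$ forces a macroscopic ``empty'' region of linear size proportional to $t$ around a seed configuration of $O(q)$ points; by the lower density bound $\inf\kappa>0$ the probability of such a void is at most $e^{-c t^{d}}$, and Palm theory turns this into an expected-count bound that decays fast enough to be integrated against $\pers^\beta$. Making this void argument rigorous, and uniform in $n$, is delicate, and it is genuinely harder for the Rips filtration than for the \v Cech filtration, where the nerve theorem renders the empty-region argument transparent; this is precisely where the restriction to the cube and the two-sided density bound~\eqref{eq:kappa} enter. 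A secondary difficulty is the a.s.\ (rather than in-expectation) form of these tail bounds, for which the concentration step must be quantitative enough to survive the Borel--Cantelli summation.
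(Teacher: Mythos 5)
Your overall route is the same as the paper's: a cutoff decomposition of $\phi\in C_{\mathrm{poly}}(\upperdiag)$, uniform tail estimates obtained from an empty-region argument (this is exactly the content of Proposition \ref{prop:bound_size_diagrams} and the cone construction behind it), Efron--Stein plus Borel--Cantelli for the almost sure statement, and control of $N_n/n$ for the Poisson/binomial equivalence. Two small remarks on the setup: your separate birth-tail bound is redundant, since a point with birth $r_1>R$ has death $r_2>R$, so $\{r_1>R\}\subset U_R=\R\times[R,\infty)$ and the single bound on $\mu_q^n(U_M)$ covers every non-diagonal escape direction; and the counting in the paper does not go through Lemma \ref{lem:geomineq} (which compares Betti numbers of nested complexes) but through the pairing of each death time with a negative $(q+1)$-simplex, combined with the cone radius $R_{\delta,\eta}(x,\X_n)$. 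These are cosmetic. However, two of your steps, as written, contain genuine gaps.

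First, the concentration step. You propose to run a bounded-difference/Efron--Stein argument directly on $\mu_q^n(\pers^\beta\ones\{\pers>M\})$, asserting that resampling one point changes it ``by a controlled amount.'' That is unjustified: via the negative-simplex pairing, moving one point near $X_i$ can create or destroy on the order of $\#(\X_n\cap C(X_i,R_{i,n}))^{q+1}$ persistence pairs, a random quantity with only exponential tails, so no deterministic increment bound (hence no McDiarmid-type exponential concentration) is available. This is precisely why the paper states it is \emph{unable} to control the diagram functional directly and instead proves concentration for the geometric majorant $Z_n^M=\sum_i \#(\X_n\cap C(X_i,R_{i,n}))^q\ones\{R_{i,n}\geq M\}$ via the moment Efron--Stein inequality (Lemma \ref{lem:concentration_bound}), with delicate correlation estimates for the events $\{X_n\in C(X_j,R_{j,n-1})\}$; the output is only a polynomial tail $n^{3/2+\varepsilon}t^{-3}e^{-cM^d}$, just barely summable for Borel--Cantelli after rescaling by $n$. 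Second, the $L_p$ step: the bound $\sup_n E[\mu_q^n(\pers^\beta)]<\infty$ with $\beta>\alpha$ controls only the \emph{first} moment of the random variable $\mu_q^n(\phi)$, whereas uniform integrability of $|\mu_q^n(\phi)|^p$ requires $\sup_n E[|\mu_q^n(\phi)|^{p'}]<\infty$ for some $p'>p$ --- higher moments of the random mass, not linear functionals of $E[\mu_q^n]$. The paper obtains these from the full exponential tail $P(\mu_q^n(U_M)>t)\leq c_1\exp(-c_2(M^d+t^{1/(q+1)}))$ of Proposition \ref{prop:bound_size_diagrams}, followed by a $p$-fold H\"older computation on $E[P_n(\varepsilon_1)\cdots P_n(\varepsilon_p)]$. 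Your void argument could in principle deliver such tails, but the proposal as stated stops at first-moment bounds, so the claimed uniform integrability does not follow.
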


\begin{remark} {\em (a)} Remark 2.4 together with Theorem 1.1 in \cite{goel2018strong} imply that the measure $\mu_q^\kappa$ has the following expression:
\begin{equation}\label{expressionmu}
\mu_q^\kappa(\phi) = E\left[\int_{\upperdiag} \phi(\rD\kappa(X)^{-1/d})d\mu_q(\rD)\right] \ \forall \phi \in C_c(\upperdiag),
\end{equation}
where $\mu_q = \mu_q^{\ones}$ is the $q$-th persistence diagram of uniform density on $[0,1]^d$, appearing in Theorem \ref{thm:duy}, and the expectation is taken with respect to a random variable $X$ having a density $\kappa$. 

{\em (b)} Assume $q=0$ and $d=1$. Then, the persistence diagram $\dgmmap_0[\KK(\X_n)]$ is simply the collection of the intervals $(X_{(i+1)}-X_{(i)})$ where $X_{(1)}<\cdots <X_{(n)}$ is the order statistics of $\X_n$. The measure $E[\mu_0^n]$ can be explicitly computed: it converges to a measure having density $u \mapsto E[\exp(-u\kappa(X))\kappa(X)]$ with respect to the Lebesgue measure on $\R_+$, where $X$ has density $\kappa$. Take $\kappa$ the uniform density on $[0,1]$: one sees that this is coherent with the basic fact that the spacings of a homogeneous Poisson process on $\R$ are distributed according to an exponential distribution. Moreover, the expression \eqref{expressionmu} is found again in this special case.\\[5pt]
{\em (c)} Theorem 1.9 in \cite{hiraoka2018limit} states that the support of $\mu^{\ones}_q$ is $\upperdiag$. Using equation \eqref{expressionmu}, the same holds for $\mu^{\kappa}_{q}$.\\[5pt]
{\em (d)} Theorem \ref{thm:main_thm} is a direct corollary of Theorem \ref{thm:main_cv}. Indeed, we have
\begin{align*}
n^{\frac{\alpha}{d}-1}  \Pers_\alpha(\dgmmap_q[\KK(\X_n)]) &\defeq n^{\frac{\alpha}{d}-1} \sum_{\rD \in\dgmmap_q[\KK(\X_n)]} \pers^\alpha(\rD) \\
&= n^{-1} \sum_{\rD \in\dgmmap_q[\KK(\X_n)]} \pers^\alpha(n^{-\frac{1}{d}}\rD) \\
&= \mu_q^n(\pers^\alpha),
\end{align*}

a quantity which converges to $\mu_q^\kappa(\pers^\alpha)$. The relevance of Theorem \ref{thm:main_thm} is illustrated in Figure \ref{fig:experiments}, where \v Cech complexes are computed on random samples on the torus.
\end{remark}

The core of the proof of Theorem \ref{thm:main_cv} consists in a control of the number of points appearing in diagrams. This bound is obtained thanks to geometric properties satisfied by the \v Cech and Rips filtrations. Finding good requirements to impose on a filtration $\KK$ for this control to hold is an interesting question. The following states some non-asymptotic controls of the number of points in diagrams which are interesting by themselves.
\begin{figure}
\centering
\begin{tabular}{c| >{\centering\arraybackslash}m{5cm} | >{\centering\arraybackslash}m{5cm} }
Weight function & $n=500$ & $n=2000$ \\
\hline
$\pers^0$ & \includegraphics[width=50mm]{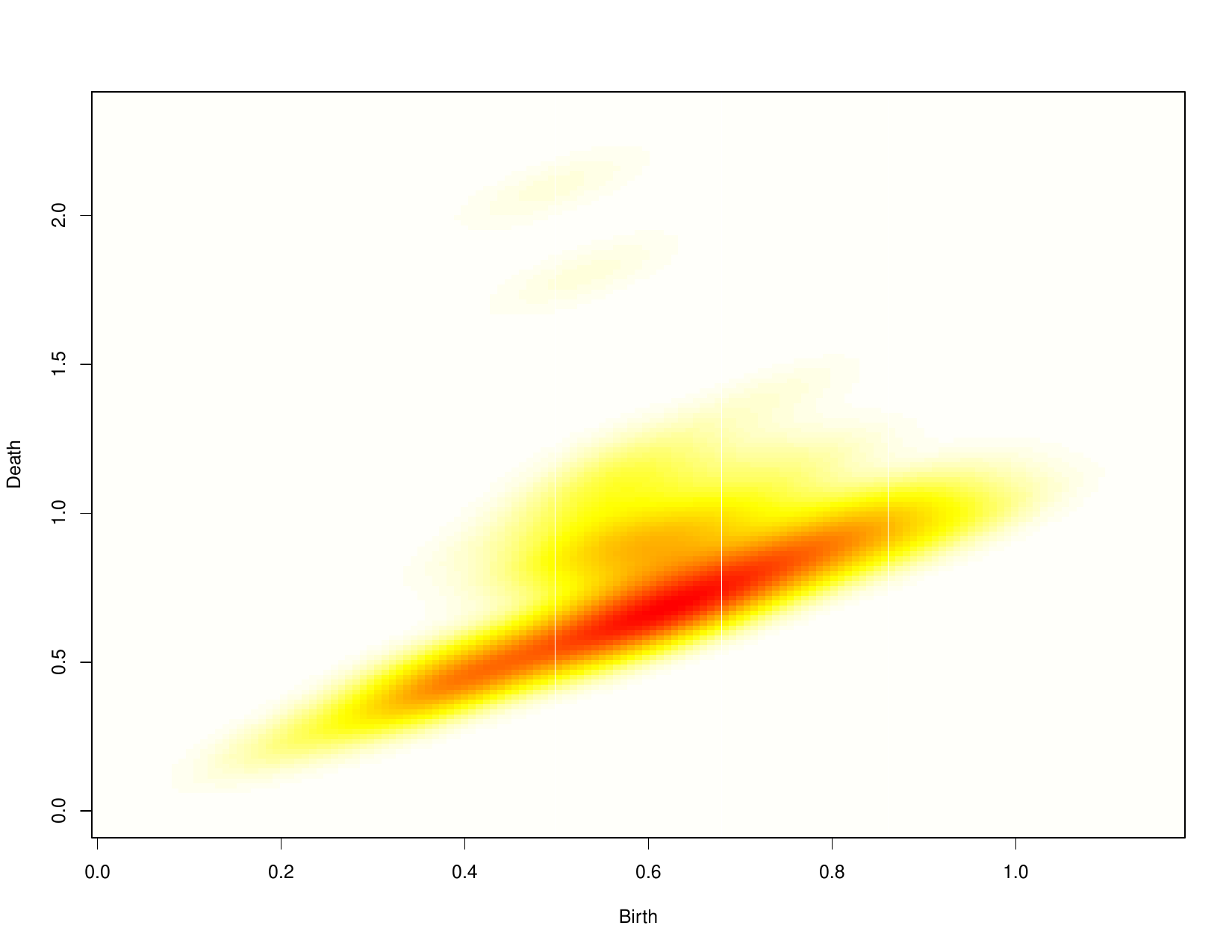} & \includegraphics[width=50mm]{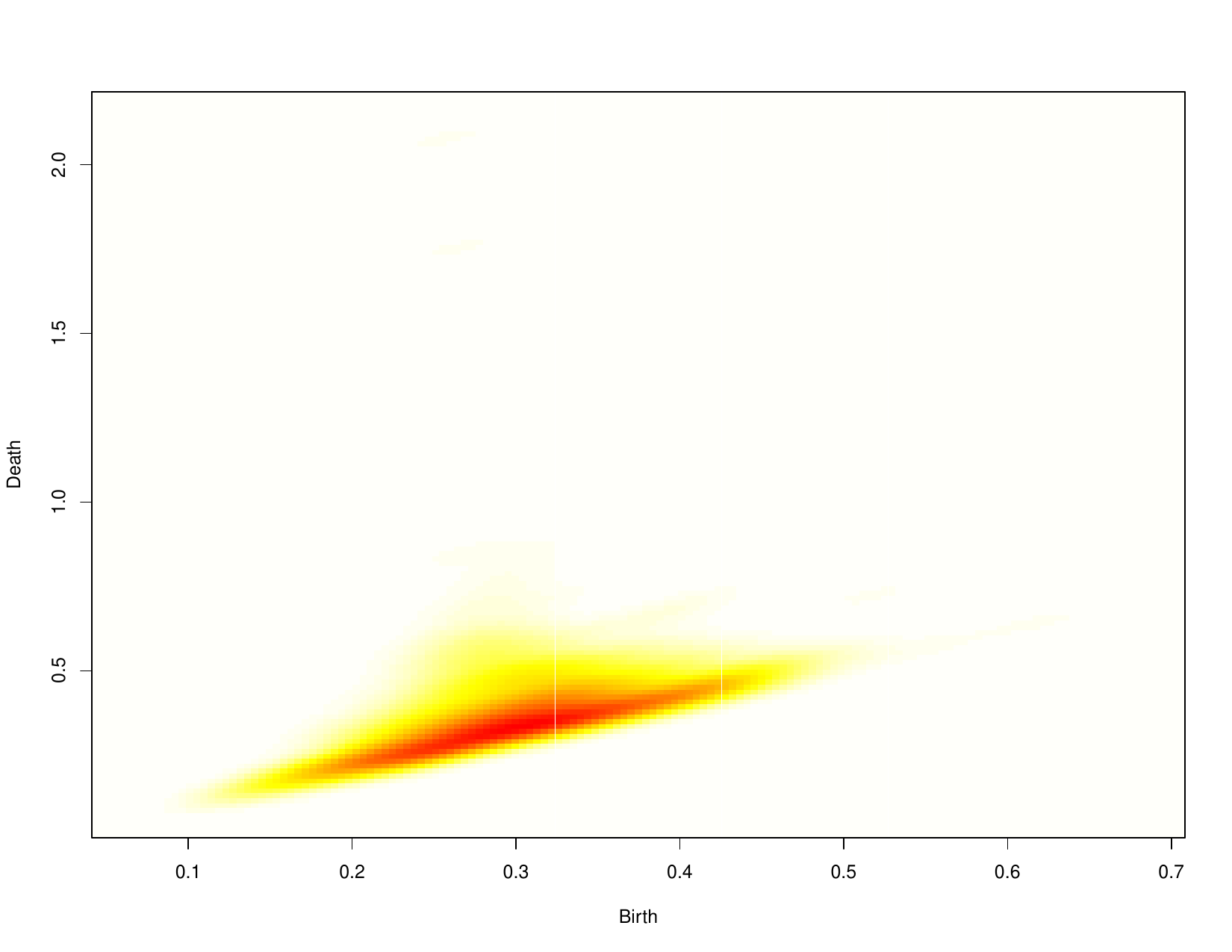} \\
\hline
$\pers^1$  & \includegraphics[width=50mm]{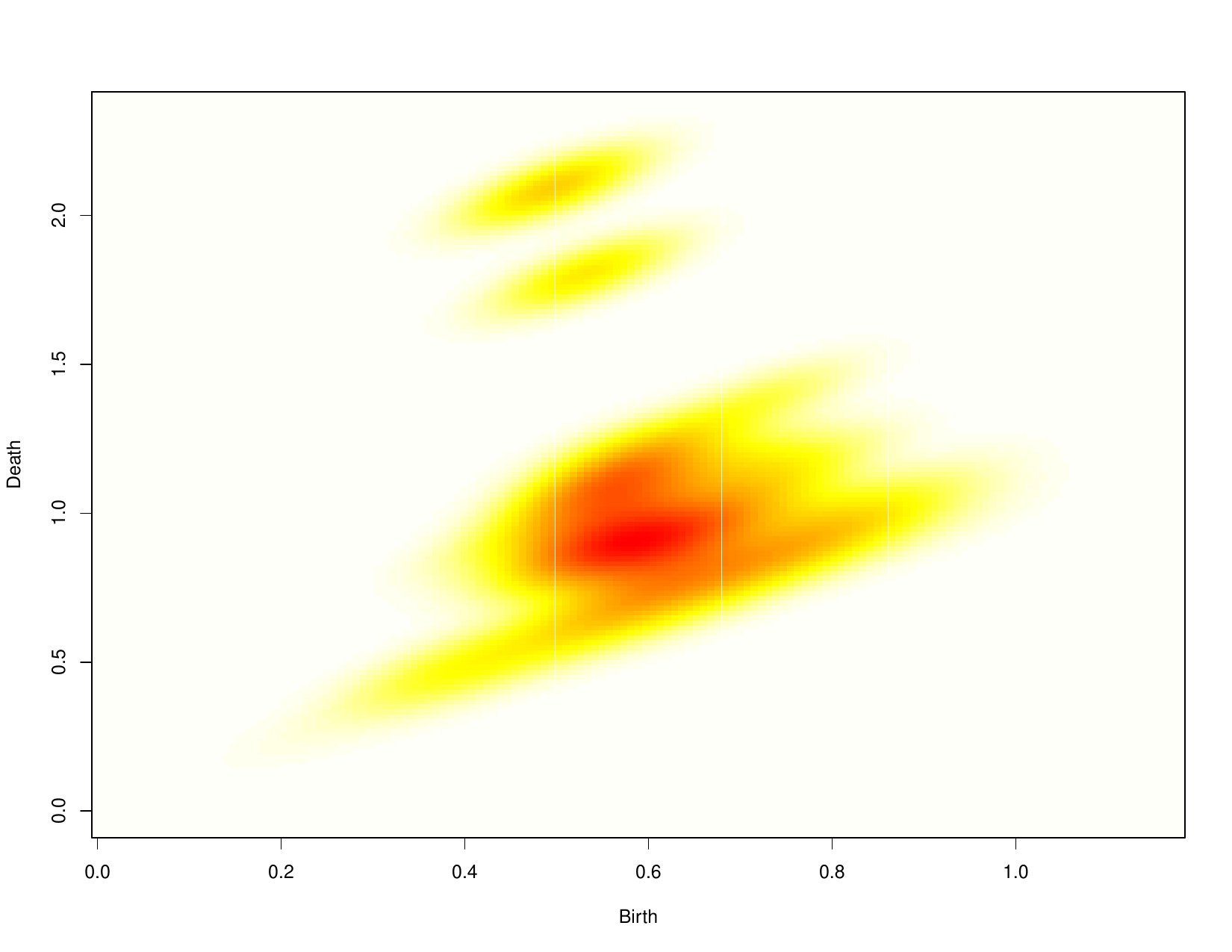} & \includegraphics[width=50mm]{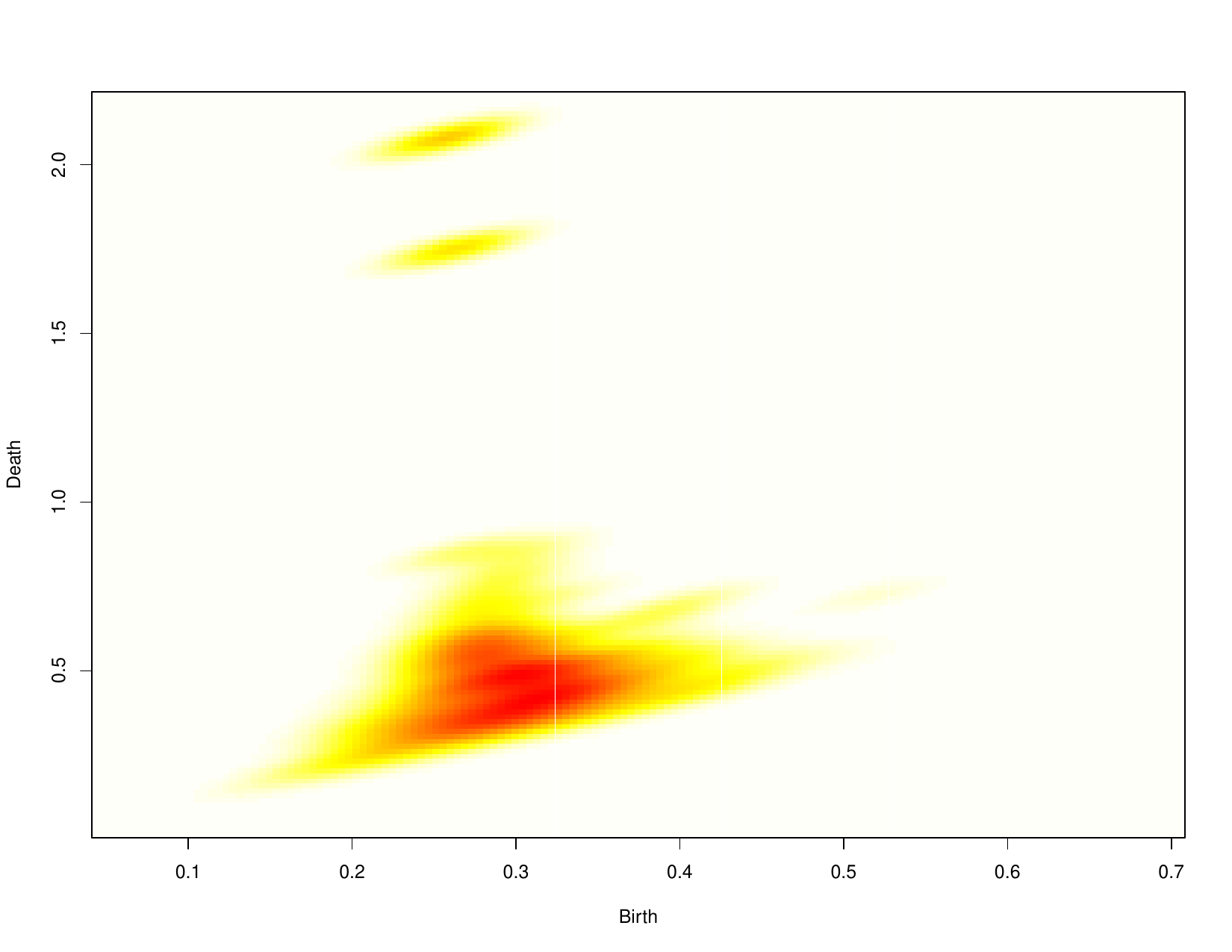} \\
\hline
$\pers^2$  & \includegraphics[width=50mm]{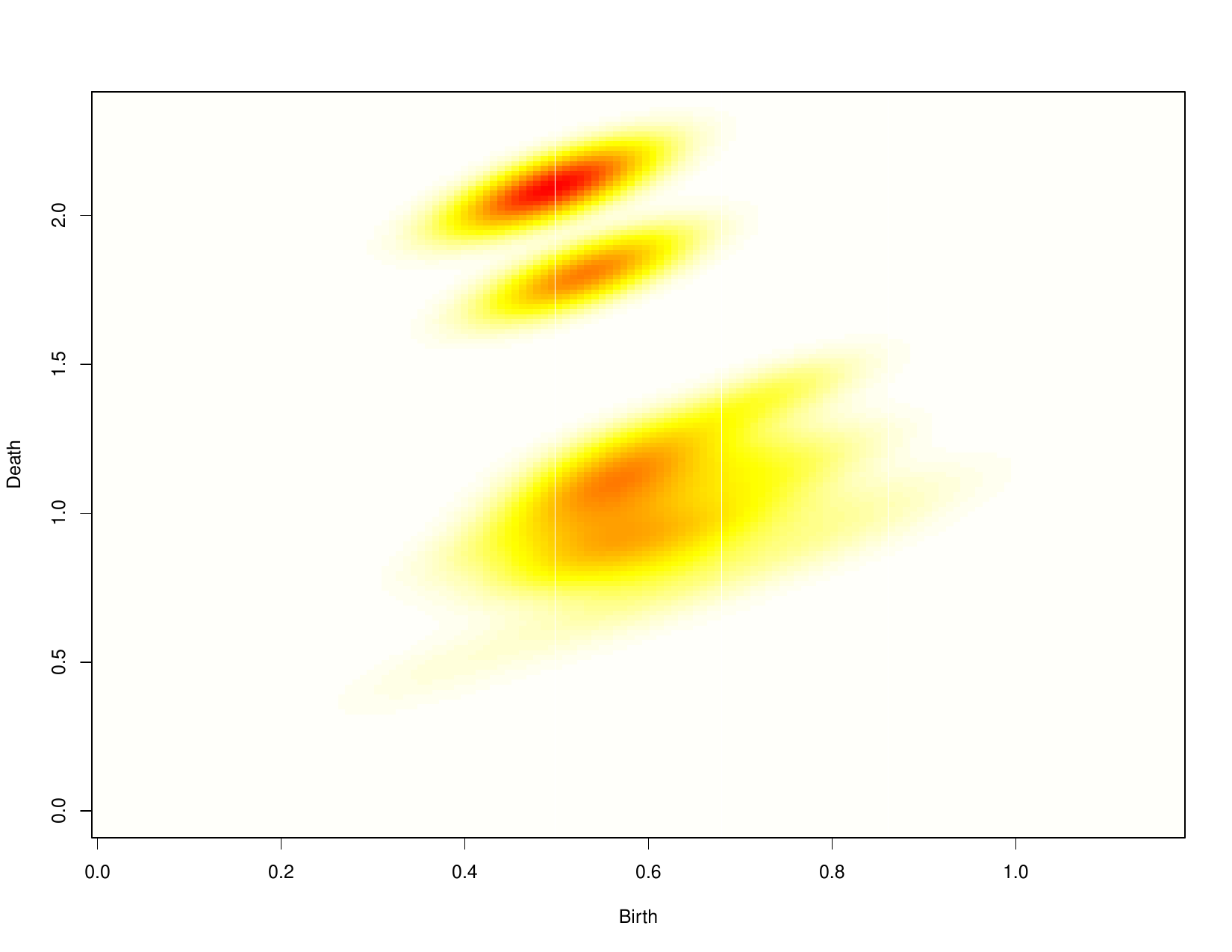} & \includegraphics[width=50mm]{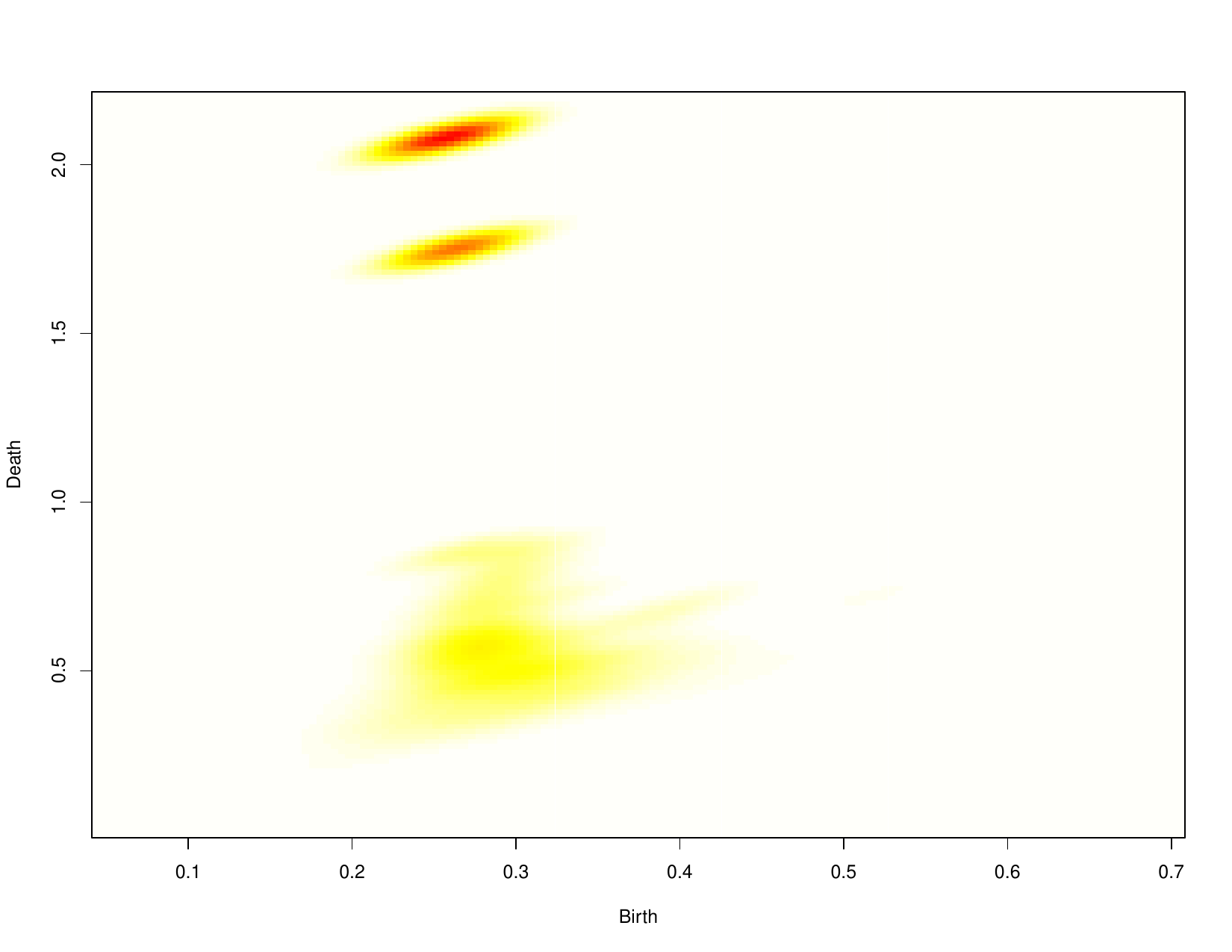} \\
\hline
$\pers^{100}$ & \includegraphics[width=50mm]{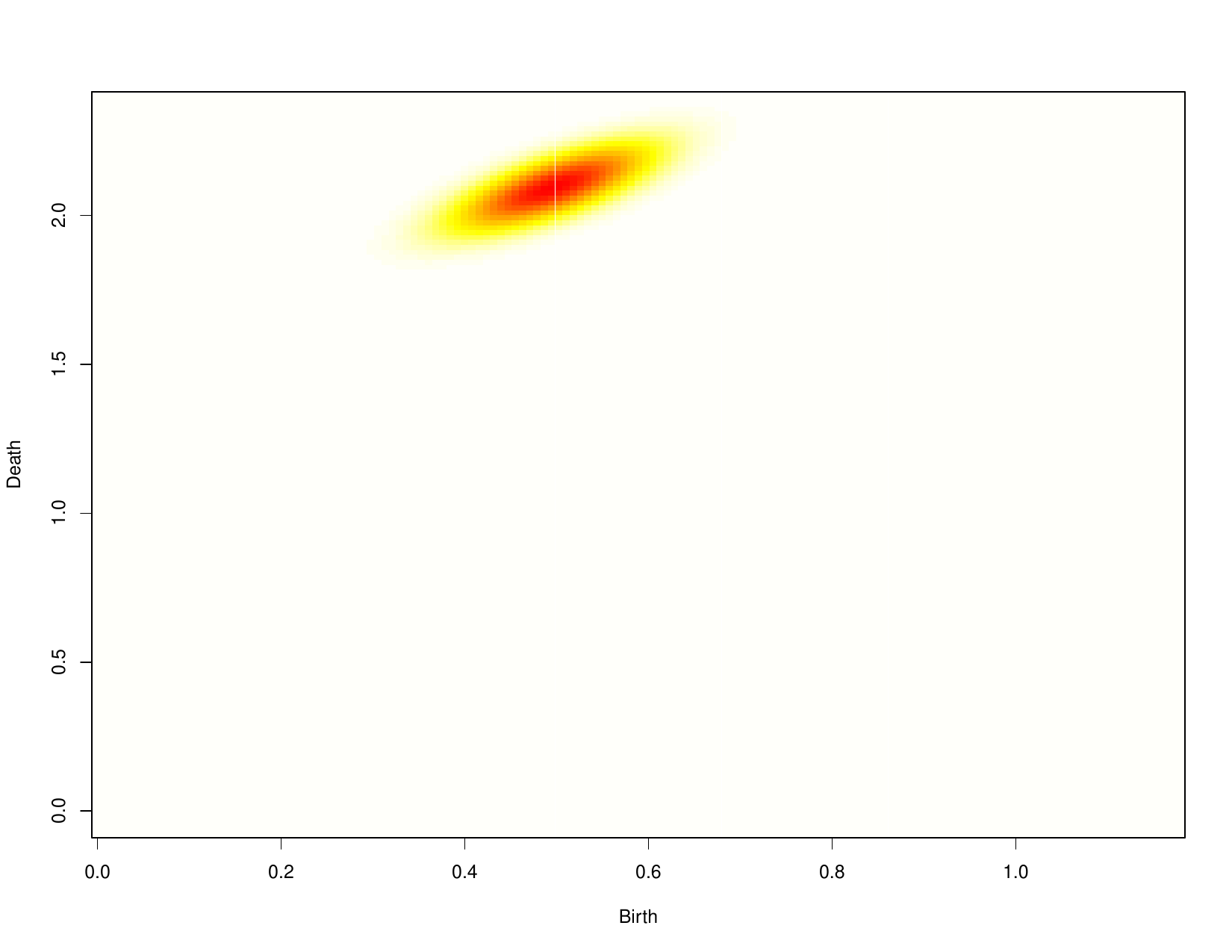} & \includegraphics[width=50mm]{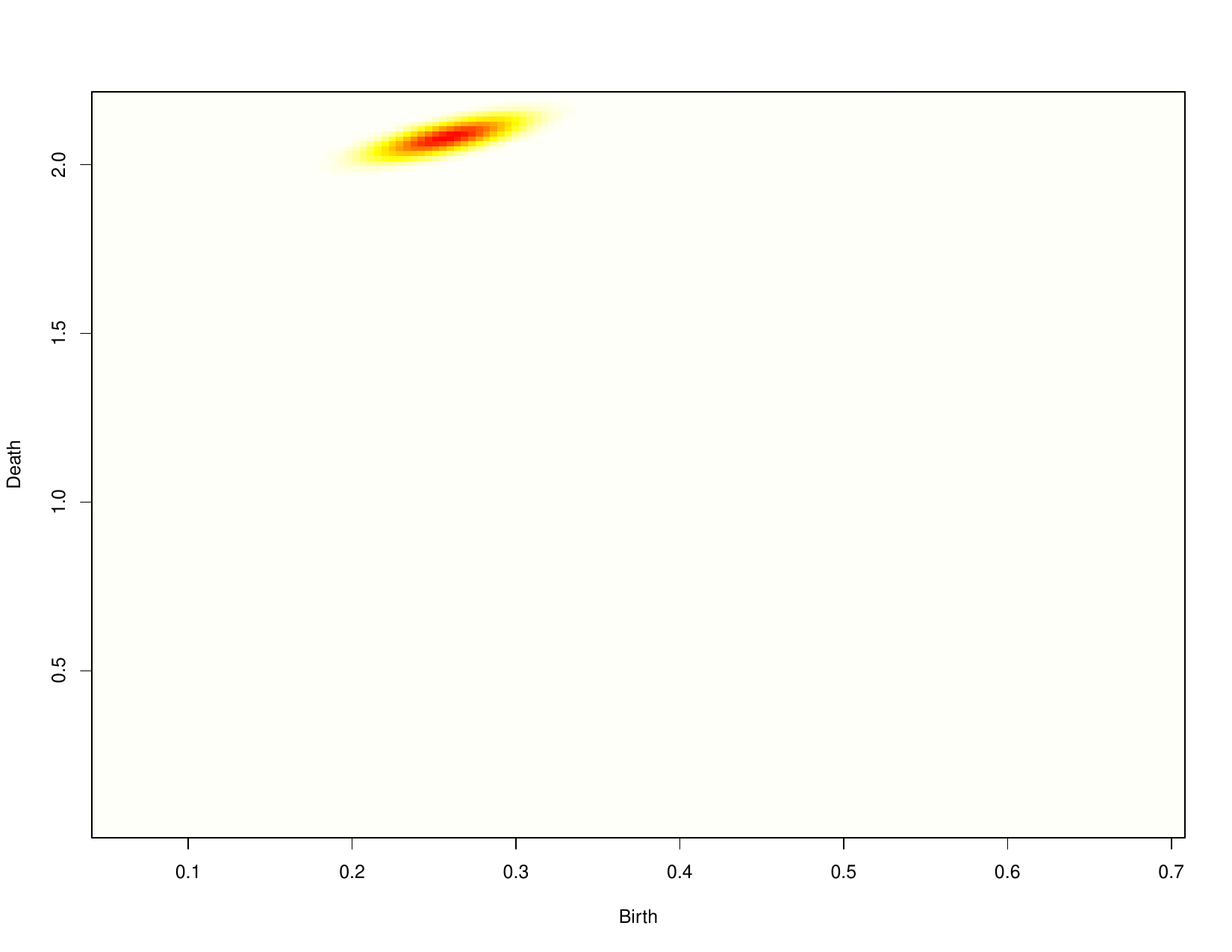} \\
\end{tabular}
\caption{For $n=500$ or $2000$ points uniformly sampled on the torus, persistence images \cite{adams2017persistence} for different weight functions are displayed. For $\alpha <2$, the mass of the topological noise is far larger than the mass of the true signal, the latter being comprised by the two points with high-persistence. For $\alpha=2$, the two points with high-persistence are clearly distinguishable. For $\alpha =100$, the noise has also disappeared, but so has one of the point with high-persistence.}
\label{fig:experiments}
\end{figure}

\begin{prop}\label{prop:bound_size_diagrams} Let $M\geq 0$ and define $U_M = \R \times [M,\infty)$. Then, there exists constants $c_1,c_2>0$ (which can be made explicit) depending on $\kappa$ and $q,$ such that, for any $t>0$,
\begin{equation}\label{eq:tail_bound}
P(\mu_q^n(U_M)>t) \leq c_1\exp(-c_2(M^d+t^{1/(q+1)})).
\end{equation}
\end{prop}
As an immediate corollary, the moments of the total mass $|\mu_n|$ are uniformly bounded. However, the proof of the almost sure finiteness of $\sup_n |\mu_n|$ is much more intricate. Indeed, we are unable to control directly this quantity, and we prove that a majorant of $|\mu_n|$ satisfies concentration inequalities. The majorant arises as the number of simplicial complexes of a simpler process, whose expectation is also controlled.

It is natural to wonder whether $\mu_q^\kappa$ has some density with respect to the Lebesgue measure on $\upperdiag$: it is the case for the for $d=1$, and it is shown in \cite{chazal_et_al:LIPIcs:2018:8739} that $E[\mu_q^n]$ also has a density. Even if those elements are promising, it is not clear whether the limit $\mu_q^\kappa$ has a density in a general setting. However, we are able to prove that the marginals of $\mu_q^\kappa$ have densities.
\begin{prop}\label{prop:density} Let $\pi_1$ (resp. $\pi_2$) be the projection on the $x$-axis (resp. $y$-axis). Then,  for $q>0$, the pushforwards $\pi_1^\star (\mu_q^\kappa)$ and $\pi_2^\star (\mu_q^\kappa)$ have densities with respect to the Lebesgue measure on $\R$. For $q=0$, $\pi_2^\star (\mu_q^\kappa)$ has a density.
\end{prop}

\section{Discussion}\label{sec:discussion}
The tuning of the weight functions in the representations of persistence diagrams is a critical issue in practice. When the statistician has good reasons to believe that the data lies near a $d$-dimensional structure, we give, through two different approaches, an heuristic to tune this weight function: a weight of the form $\pers^\alpha$ with $\alpha\geq d$ is sensible. The study carried out in this paper allowed us to show new results on the asymptotic structure of random persistence diagrams. While the existence of a limiting measure in a weak sense was already known, we strengthen the convergence, allowing a much larger class of test functions. Some results about the properties of the limit are also shown, namely that it has a finite mass, finite moments, and that its marginals have densities with respect to the Lebesgue measure. Challenging open questions include:
\begin{itemize}
	\item Convergence of the rescaled diagrams $\mu_n$ with respect to some transport metric: The main issue consists in showing that one can extend, in a meaningful way, the distance $W_p$ to general Radon measures.  This is the topic of a recent work (see Section 5.1 in \cite{divol2019understanding}).
	\item Existence of a density for the limiting measure: An approach for obtaining such results would be to control the numbers of points of a diagram in some square $[r_1,r_2]\times [s_1,s_2]$.
	\item Convergence of the number of points in the diagrams: The number of points in the diagrams is a quantity known to be not stable (motivating the use of bottleneck distances, which is blind to them). However, experiments show that this number, conveniently rescaled, converges in this setting. An analog of Lemma \ref{lem:geomineq} for the number of points in the diagrams with small persistence would be crucial to attack this problem.
	\item Generalization to manifolds: While the vague convergence of the rescaled diagrams is already proven in \cite{goel2018strong}, allowing test functions without compact support seems to be a challenge. Once again, the crucial issue consists in controlling the total number of points in the diagrams.
	\item Dimension estimation: We have proved that the total persistence of a diagram built on a given point cloud depends crucially on the intrinsic dimension of such a point cloud. Inferring the dependence of the total persistence with respect to the size of the point cloud (through subsampling) leads to estimators of this intrinsic dimension. Studying the properties of such estimators is the topic of an on-going work of Henry Adams and co-authors (personal communication).
\end{itemize}

\section{Proofs}\label{sec:proofs}

\subsection{Proof of Theorem \ref{thm:main_stab}}\label{subsec:proof_stab}
We only treat the case $p<\infty$, the proof being easily adapted to the case $p=\infty$. Introduce for $\mu,\nu$ two measures of mass  $m>0$ on  $\upperdiag$, the Monge-Kantorovitch distance between $\mu$ and $\nu$:
\begin{equation}
    d_{\mathrm{MK}}(\mu,\nu) \defeq \sup\left\{ \mu(\phi)-\nu(\phi),\ \phi:\upperdiag \to \R\ \text{1-Lipschitz}\right\}.
\end{equation}

Fix two persistence diagrams $\dgm_1$ and $\dgm_2$. Denote $\mu = \dgm_1(w \ \cdot)$ (resp. $\nu = \dgm_2(w \ \cdot)$) the measure having density $w$ with respect to $\dgm_1$ (resp. $\dgm_2$). For $\gamma$ a matching attaining the $p$-th Wasserstein distance between $\dgm_1$ and $\dgm_2$, denote $\tilde{\mu} =\sum_{\rD\in \dgm_1\cup \Delta}w(\gamma(\rD))\delta_{\rD}$.
We have
\begin{align}
\|\Phi_w(\dgm_1)-\Phi_w(\dgm_2)\|_{\mathcal{B}} &= \|\mu(\phi)-\nu(\phi)\|_\mathcal{B}\leq \|\mu(\phi)-\tilde{\mu}(\phi)\|_\mathcal{B} + \|\tilde{\mu}(\phi) - \nu(\phi)\|_\mathcal{B} \nonumber \\
&\leq \|\phi\|_\infty |\mu - \tilde{\mu}| + \Lip(\phi) d_{\mathrm{MK}}(\tilde{\mu},\nu).\label{eq:stab1}
\end{align}
We bound the two terms in the sum separately. Let us first bound $d_{\mathrm{MK}}(\tilde{\mu},\nu)$. The Monge-Kantorovitch distance is also the infimum of the costs of transport plans between $\tilde{\mu}$ and $\nu$ (see \cite[Chapter 2]{villani2008optimal} for details), so that
\[d_{\mathrm{MK}}(\tilde{\mu},\nu)\leq  \sum_{\rD\in \dgm_1 \cup \Delta} w(\gamma(\rD))\|r-\gamma(r)\|.\]
Define $q$ such that $\frac{1}{p}+\frac{1}{q}=1$. As condition \eqref{eq:grad_weight} implies that $\|w(\rD)\| \leq \frac{A}{\alpha} \pers(\rD)^{\alpha}$, the distance $d_{\mathrm{MK}}(\tilde{\mu},\nu)$ is bounded by
\begin{align}
\sum_{\rD\in \dgm_1 \cup \Delta} w(\gamma(\rD))\|r-\gamma(r)\| &\leq \left(\sum_{\rD\in \dgm_1 \cup \Delta} w(\gamma(\rD))^q\right)^{1/q} \left(\sum_{\rD\in \dgm_1 \cup \Delta} \|r-\gamma(r)\|^p \right)^{1/p} \nonumber \\
&\leq  \frac{A}{\alpha}\left(\sum_{\rD\in \dgm_1 \cup \Delta} \pers(\gamma(\rD))^{q\alpha}\right)^{1/q} W_p(\dgm_1,\dgm_2) \nonumber \\ 
&\leq \frac{A}{\alpha}\left(G\left\{q\alpha\right\} \right)^{1/q} W_p(\dgm_1,\dgm_2). \label{eq:stab2}
\end{align}
We now treat the first part of the sum in \eqref{eq:stab1}. For $\rD_1, \rD_2$, in $\upperdiag$ with $\pers(\rD_1) \leq \pers(\rD_2)$, define the path with unit speed $h : [\pers(\rD_1),\pers(\rD_2)] \to \upperdiag$ by 
\[ h(t) = \rD_2 \frac{t- \pers(\rD_1)}{\pers(\rD_2)-\pers(\rD_1)} + \rD_1 \frac{\pers(\rD_2)-t}{\pers(\rD_2)-\pers(\rD_1)},\]
so that it satisfies $\pers(h(t)) = t$. The quantity $|w(\rD_1) - w(\rD_2)|$ is bounded by
\begin{align*}
\int_{\pers(\rD_1)}^{\pers(\rD_2)} |\nabla w(h(t)).h'(t)|dt &\leq \int_{\pers(\rD_1)}^{\pers(\rD_2)} A\ \pers(h(t))^{\alpha-1}dt \\
&\leq \int_{\pers(\rD_1)}^{\pers(\rD_2)} A\ t^{\alpha-1}dt \\
&= \frac{A}{\alpha}(\pers(\rD_2)^\alpha-\pers(\rD_1)^\alpha).
\end{align*}
For $0<y<x$ and $0\leq a \leq 1$, using the convexity of $t\mapsto t^\alpha$, it is easy to see that $x^\alpha - y^\alpha \leq \alpha(x-y)^a x^{\alpha-a}$. Define $p' = \frac{p}{a}$, $q' = \frac{p'}{p'-1}$ and  $M(\rD) \defeq \max(\pers(\rD),\pers(\gamma(\rD)))$. We have,
\begin{align}
|\mu-\tilde{\mu}| &= \sum_{\rD\in \dgm_1 \cup \Delta} |w(\rD)-w(\gamma(\rD))| \nonumber \\
&\leq A \sum_{\rD \in \dgm_1 \cup \Delta}|\pers(\rD)-\pers(\gamma(\rD))|^a M(\rD)^{\alpha-a} \nonumber \\
&\leq  A\left(\sum_{\rD \in \dgm_1 \cup \Delta}|\pers(\rD) - \pers(\gamma(\rD))|^{ap'}\right)^{1/p'}   \left(\sum_{\rD \in \dgm_1 \cup \Delta}  M(\rD)^{q'(\alpha-a)} \right)^{1/q'} \nonumber \\
&\leq  A W_{ap'}(\dgm_1,\dgm_2)^{a}   \left(\sum_{\rD \in \dgm_1 \cup \Delta}  \left(\pers(\rD)^{q'(\alpha-a)}+\pers(\gamma(\rD))^{q'(\alpha-a)}\right) \right)^{1/q'} \nonumber \\
&\leq  A W_{ap'}(\dgm_1,\dgm_2)^{a}  2^{1/q'} \left(G\{q'(\alpha-a)\} \right)^{1/q'} \label{eq:stab3} 
\end{align}
Combining equations \eqref{eq:stab1}, \eqref{eq:stab2} and \eqref{eq:stab3} concludes the proof.

\subsection{Proof of Corollary \ref{cor:stab_bis}}
Corollary \ref{cor:stab_bis} follows easily by using the definition of a space with bounded $m$-th total persistence along with the inequality $G\{t_1+t_2\} \leq \ell^{t_1}G\{t_2\}$. 

\subsection{Proof of Corollary \ref{cor:first_asymptotics}}
As already discussed, Theorem \ref{thm:main_stab} can be applied with $f_n = d(\cdot,\X_n)$ and $f$ the null function on the manifold $\X$.  Take $p=\infty$, $d<\alpha$ and $0<a< \min(1,\alpha-d)$:
\begin{equation}\label{proofCorStab}
\begin{aligned}
\|\Phi_w(\dgmmap_q[f_n])-\Phi_w(\dgmmap_q[f])\|_{\mathcal{B}} &\leq \Lip(\phi)\frac{A}{\alpha}G\{\alpha\} W_\infty(\dgmmap_q[f_n],\dgmmap_q[f]) \\
 &\hspace*{-1.5cm}+2\|\phi\|_\infty A G\{\alpha-a\} W_\infty(\dgmmap_q[f_n],\dgmmap_q[f])^a.
\end{aligned}
\end{equation}
 It is mentioned at the end of Section 2 in \cite{cohen2010lipschitz} that, for $m>d$, $\Pers_m(\dgmmap_q[f_n]) \leq mC_{\X} \|f_n\|_\infty^{m-d}/(m-d)$ for some constant $C_{\X}$ depending only on $\X$. Moreover, the stability theorem for the bottleneck distance ensures that $W_\infty(\dgmmap_q[f_n],\dgmmap_q[f]) \leq \|f_n\|_\infty$. Therefore,
 \begin{align}
\|\Phi_w(\dgmmap_q[f_n])-\Phi_w(\dgmmap_q[f])\|_{\mathcal{B}}
&\leq \Lip(\phi)\frac{\alpha AC_{\X}}{\alpha-d} \|f_n\|_\infty^{\alpha-d +1}  + \|\phi\|_\infty \frac{2AC_{\X}(\alpha-a)}{\alpha-a-d} \|f_n\|_\infty^{\alpha-a-d+a} \nonumber \\
 &\leq \Lip(\phi)\frac{\alpha AC_{\X}}{\alpha-d} \|f_n\|_\infty^{\alpha-d +1}  + \|\phi\|_\infty \frac{2 A C_{\X}\alpha}{\alpha-d} \|f_n\|_\infty^{\alpha-d}, \label{eq:cor_sum}
 \end{align}
 where, in the last line, the second term was minimized over $a$. The quantity $\|f_n\|_\infty$ is the Hausdorff distance between $\X_n$ and $\X$. Elementary techniques of geometric probability (see for instance \cite{cuevas2009set}) show that if $\X$ is a compact $d$-dimensional manifold, then $E[\|f_n\|_\infty^\beta] \leq c \left( \frac{\ln n}{n}\right)^{\beta /d}$ for $\beta \geq 0$, where $c$ is some constant depending on $\beta, \X, \inf \kappa$ and $\sup \kappa$. Therefore, the first term of the sum \eqref{eq:cor_sum} being negligible,
\begin{align*}
E\left[ \|\Phi_w(\dgmmap_q[f_n]) - \Phi_w(\dgmmap_q[f]) \|_{\mathcal{B}}\right] 
& \leq \|\phi\|_\infty \frac{2 AC_{\X}\alpha}{\alpha-d} c \left( \frac{\ln n}{n}\right)^{(\alpha-d) /d} + o\left(\left( \frac{\ln n}{n}\right)^{(\alpha-d) /d}\right).
\end{align*}
In particular, the conclusion holds for any $C >  2 AC_{\X} c $, for $n$ large enough.\hfill $\square$

\paragraph{} We now prove the propositions of Section \ref{sec:convergence}. In the following proofs, $c$ is a constant, depending on $\kappa$, $d$ and $q,$ which can change from line to line (or even represent two different constants in the same line). A careful read can make all those constants explicit. If a constant depends also on some additional parameter $x$, it is then denoted by $c(x)$.

\subsection{Proof of Proposition \ref{prop:bound_size_diagrams}}\label{sec:proof_bound}
First, as the right hand side of the inequality \eqref{eq:tail_bound} does not depend on $n$, one may safely assume that $\mu_n$ is built with the binomial process. The proof is based on two observations.\\[5pt]
(i) Let $r(\sigma) \defeq \min \{r>0, \sigma \in K^r(\X_n)\}$ denote the \emph{filtration time} of $\sigma$. A simplex $\sigma$ is said to be \emph{negative} in the filtration $\KK(\X_n)$ if $\sigma$ is not included in any cycle of $K^{r(\sigma)}(\X_n)$. A basic result of persistent homology states that points in $\dgmmap_q[\KK(\X_n)]$ are in bijection with pair of simplexes, one negative and one positive (i.e.\ non-negative). Moreover, the death time $r_2$ of a point $\rD=(r_1,r_2)$ of the diagram is exactly $r(\sigma)$ for some negative $(q+1)$-simplex $\sigma$. Therefore, $n\mu_n(U_M)$ is equal to $N_q(\X_n,M)$, the number of negative $(q+1)$-simplexes in the filtration $\KK(\X_n)$ appearing after $M_n \defeq n^{-1/d}M$. More details about this pairing between simplexes of the filtration can, for instance, be found in \cite{edelsbrunner2010computational}, section VIII.1.\\[5pt]
(ii) The number of negative simplexes in the \v Cech and Rips filtration can be efficiently bounded thanks to elementary geometric arguments.

\subsubsection{Geometric arguments for the Rips filtration}
We have
\begin{equation} N_q(\X_n,M) = \frac{1}{q+2} \sum_{i=1}^n \#\Xi(X_i,\X_n),\end{equation}
where, for $x\in \X$, with $\X$ a finite set, $\Xi(x,\X)$ is the set of negative $(q+1)$-simplexes (and therefore of size $q+2$) in $\RR(\X)$ that are containing $x,$ and have a filtration time larger than $M_n$.  The following construction is inspired by the proof of Lemma 2.4 in \cite{mcgivney1999asymptotics}. 

The angle (with respect to $0$) of two vectors $x,y \in \R^d$ is defined as

\[ \angle xy \defeq \arccos \left(\frac{\langle x,y \rangle}{\|x\| \|y\|}\right).\]
The angular section of a cone $A$ is defined as $\sup_{x,y \in A} \angle xy$. Denote by $C(x,r)$ the cube centered at $x$ of side length $2r$. For $0<\delta <1$, and for each face of the cube $C(x,r),$ consider a regular grid with spacing $\delta r$, so that the center of each face is one of the grid points. This results in a partition of the boundary of the cube $C(x,r)$ into $(d-1)$-dimensional cubes $\big(C_j^\delta(x,r)\big)_{j=1\dots Q}$ of side length $\delta r.$ Using this partition of the boundary of $C(x,r),$ we construct a partition of $C(x,r)$ into closed convex cones $(A_j^\delta(x,r))_{j=1\dots Q}$, where each cone $A_j^\delta(x,r)$ is defined as a $d$-simplex spanned by $x$ and one of the $(d-1)$-dimensional cubes $C_j^\delta(x,r)$ of side length $\delta r$ on a face of $C(x,r)$. In other words, the point $x$ is the apex of each $A_j^\delta(x,r)$, and $C_j^\delta(x,r)$ is its base. We call two such cones $A_j^\delta(x,r)$ and $A_{j^\prime}^\delta(x,r)$ {\it adjacent,} if $A_j^\delta(x,r) \cap A_{j^\prime}^\delta(x,r) \ne \{x\}.$

Fix $0<\eta <1,$ and define $R_{\delta,\eta}(x,\X_n)$ to be the smallest radius $r$ so that each cone $A_j^\delta(x,\eta r)$ in $C(x,\eta r)$ either contains a point of $\X_n$ other than $x$, or is not a subset of $(0,1)^d$ (see Figure \ref{fig:cones} for an illustration).

\begin{figure}
\centering
      \begin{tabular}{cc}
        \includegraphics[width=70 mm]{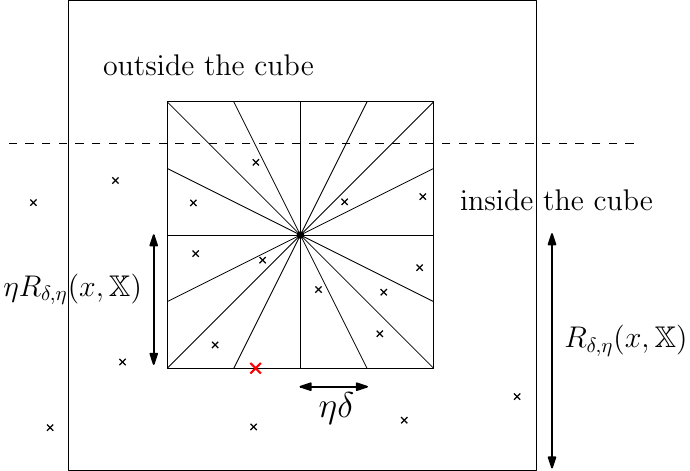}
    
    &
      
        \includegraphics[width=70 mm]{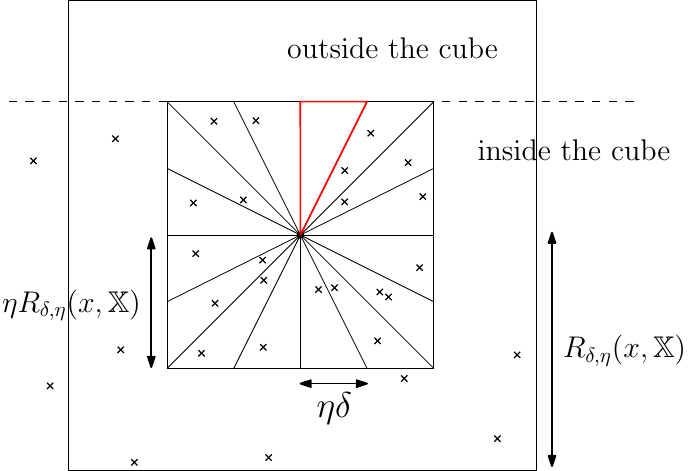}
    \end{tabular}
    \caption{\small \it Illustration of the definition of $R\defeq R_{\delta,\eta}(x,\X)$ for some two point clouds $\X$. The dashed line indicates the boundary of $[0,1]^d$. On the left display, the radius $R$ is such that there is a point (indicated in red) on $C_j^\delta(x,R)$. On the right display, there is a cone $A_j^\delta(x,R)$, indicated in red, for which $C_j^\delta(x,R)$ is on some face of the cube $[0,1]^d$.}
    \label{fig:cones}
\end{figure}

\begin{lemma}\label{lem:technical_geom} Let $x\in [0,1]^d$. Fix $\delta >0,$ and $0 < r \leq \frac{1}{2}$, and let $A_j^\delta(x, r)$ be a cone of $C(x,r)$ whose base $C_j^\delta(x,r)$ intersects $[0,1]^d$. Then, either $A_j^\delta(x, r)$ is a subset of $[0,1]^d$, or there exists a cone $A_{j'}^\delta(x,r)$ of $C(x,r)$ adjacent to $A_j^\delta(x, r)$ that is a subset of $[0,1]^d$.
\end{lemma}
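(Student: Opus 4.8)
The plan is to reduce the assertion about cones to an assertion about their bases and then slide the base into a neighbouring grid cell that lies inside the cube. First I would record the one consequence of convexity that drives everything: since $x\in[0,1]^d$ and $[0,1]^d$ is convex, the cone $A_j^\delta(x,r)=\mathrm{conv}(\{x\}\cup C_j^\delta(x,r))$ is contained in $[0,1]^d$ if and only if its base $C_j^\delta(x,r)$ is. Thus the first alternative of the lemma occurs exactly when $C_j^\delta(x,r)\subset[0,1]^d$, and in the remaining case it suffices to produce an \emph{adjacent} cell whose base lies inside $[0,1]^d$. After relabelling coordinates I may assume the base sits on the face of $C(x,r)$ normal to $e_d$, say at $y_d=x_d+r$, so that on this face $y_d$ is frozen while $(y_1,\dots,y_{d-1})$ range over a $(d-1)$-cube of side $\delta r$.

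The geometric input I would isolate next is that, because $x\in[0,1]^d$ and $r\le\tfrac12$, the cube $C(x,r)$ protrudes out of $[0,1]^d$ in at most one direction along each coordinate axis: protruding in both directions of axis $k$ would force $x_k-r<0$ and $x_k+r>1$, hence $r>\tfrac12$. Two consequences follow. First, the frozen normal coordinate of an intersecting base is automatically admissible: if $x_d+r>1$ the entire face would have $y_d>1$ and could not meet $[0,1]^d$, so necessarily $x_d+r\in[0,1]$. Second, any failure of $C_j^\delta(x,r)\subset[0,1]^d$ must come from a tangential coordinate $k\le d-1$ along which the base protrudes, say with some $y_k>1$ (the case $y_k<0$ being symmetric). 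For each such $k$ I would translate the base by one grid step toward the interior, replacing its $y_k$-range $[b_k,b_k+\delta r]$ by $[b_k-\delta r,b_k]$, carrying out this replacement simultaneously over all protruding tangential coordinates.

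The heart of the argument — and the step I expect to require the most care — is checking that this simultaneous inward shift (i) remains a genuine cell of the face, (ii) lands the base inside $[0,1]^d$, and (iii) produces an adjacent cone. For (i) the decisive observation is that the innermost cell cannot itself protrude: its $y_k$-range reaches at most $x_k-r+\delta r$, and $x_k-r+\delta r>1$ would give $x_k>1+r(1-\delta)>1$, contradicting $x_k\le1$; hence the protruding base is never the innermost cell and always has an inward neighbour to move into. For (ii), protrusion gives $b_k+\delta r>1$ while the base meeting $[0,1]^d$ forces $b_k\le1$, so the shifted range $[b_k-\delta r,b_k]$ has upper end $\le1$ and lower end $>1-2\delta r\ge1-2r\ge0$, hence lies in $[0,1]$; treating every protruding coordinate this way places the whole shifted base in $[0,1]^d$. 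For (iii), the new cell differs from $C_j^\delta(x,r)$ by one step in some coordinates, so the two cells share a corner $p$, and then $[x,p]$ lies in both cones, whence $A_j^\delta(x,r)\cap A_{j'}^\delta(x,r)\supsetneq\{x\}$ and the cones are adjacent. Combining (i)--(iii) with the convexity reduction yields an adjacent cone contained in $[0,1]^d$, which finishes the proof. The only real obstacle is the bookkeeping of ensuring these inequalities hold at once across the possibly several protruding coordinates, and it is precisely there that the uniform bound $r\le\tfrac12$ and the location $x\in[0,1]^d$ are used coordinate by coordinate; the dependence on the exact placement of the grid is inessential, since the innermost cell can never reach $y_k>1$ regardless of how the face is subdivided.
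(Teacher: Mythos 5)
Your proof is correct and takes essentially the same route as the paper's: reduce containment of the cone to containment of its base via convexity, then shift the base one grid step inward along each protruding coordinate, using $x\in[0,1]^d$ and $r\le \tfrac12$ (with $\delta<1$) to guarantee the shifted cell exists, stays in $[0,1]^d$, and does not overshoot the opposite face. Your checks (i)--(iii) simply make explicit the details (innermost cell cannot protrude, one-sided protrusion per axis, adjacency via a shared corner) that the paper's terser argument leaves implicit.
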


{\em Proof.}  A necessary and sufficient condition for a cone $A_j^\delta(x,r)$ to be a subset of $[0,1]^d$ is that $C_j^\delta(x,r) \subset [0,1]^d$. Suppose that this is not the case, i.e.\ we have $C_j^\delta(x,r) \setminus [0,1]^d \ne \emptyset.$ For each coordinate $i=1,\dots, d$ for which $C_j^\delta(x,r)$ extends beyond a face of $[0,1]^d$, move one step in the `opposite' direction, and find the corresponding adjacent cone. The fact that $r\leq 1/2$ ensures that these (at most $d$) steps, each of size $r\delta,$ do not make the exterior boundary of the corresponding adjacent cone extend beyond any of the opposite faces of the cube corresponding to the directions of the steps.  \hfill $\square$

\vspace*{0.3cm}
Note that the angular section (with respect to $x$) of the union of a cone $A_j^\delta(x,r)$ and its adjacent cones is bounded by $c \delta$ for some constant $c$.

\begin{lemma}\label{lem:geom_lemma} Let $\eta = \min\{1/\sqrt{d},1/2\}.$ There exists a $\delta=\delta(d)>0$, such that each simplex $\sigma$ of $\Xi(x,\X_n)$ is included in $C(x,R_{\delta,\eta}(x,\X_n))$. Furthermore, $\Xi(x,\X_n)$  is empty if $R_{\delta,\eta}(x,\X_n) > M_n$.
\end{lemma}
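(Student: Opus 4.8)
The plan is to derive both assertions from a single geometric fact: every negative $(q+1)$-simplex $\sigma$ of $\RR(\X_n)$ incident to $x$ is local at scale $R\defeq R_{\delta,\eta}(x,\X_n)$, in the quantitative sense that $\diam(\sigma)\le R$. Since $x$ is a vertex of $\sigma$ and the Rips filtration time equals the diameter, this at once gives $\sigma\subset C(x,R)$: every other vertex $v$ satisfies $\|x-v\|_\infty\le\|x-v\|\le\diam(\sigma)\le R$, so $v\in C(x,R)$. This is the first assertion. For the furthermore clause, any $\sigma\in\Xi(x,\X_n)$ has filtration time $\diam(\sigma)>M_n$; combined with $\diam(\sigma)\le R$ this forces $R>M_n$. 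Thus $R_{\delta,\eta}(x,\X_n)>M_n$ is the condition attached to $\Xi(x,\X_n)$ being nonempty, which is the content of the clause (a vertex surrounded by points in all directions, for which $R$ stays below $M_n$, anchors no such simplex).

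The core is therefore the diameter bound, which I would prove by contradiction. Suppose $\sigma$ is negative, $x\in\sigma$, and $D\defeq\diam(\sigma)>R$. Negativity means $\partial\sigma$ is a $q$-cycle that is not a boundary in $K^{D^-}(\X_n)$. By definition of $R$, at scale $\eta R$ every cone $A_j^\delta(x,\eta R)$ either contains a point of $\X_n\setminus\{x\}$ or leaves $(0,1)^d$; applying Lemma \ref{lem:technical_geom} at radius $\eta R$ (legitimate in the regime $\eta R\le 1/2$, the only one that matters, which is why $\eta$ carries the factor $1/2$) I replace each boundary cone by an adjacent interior one, obtaining in every direction a representative $p_j\in\X_n$. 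With $\eta=\min\{1/\sqrt d,1/2\}$ one has $p_j\in C(x,\eta R)$, hence $\|x-p_j\|\le\sqrt d\,\eta R\le R<D$ (this is the role of the factor $1/\sqrt d$ in $\eta$): all edges $\{x,p_j\}$ are present strictly before time $D$, and the $p_j$ form a $\delta$-fine covering of the directions around $x$ at a scale below $D$.

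I would then use this directional covering to contract $\partial\sigma$ inside $K^{D^-}$. Choosing $\delta=\delta(d)$ small enough that each cone together with its neighbors has angular section at most $c\delta$, every direction occupied by $\sigma$ is $\delta$-aligned with a representative $p_j$ lying strictly closer to $x$, and the star of the $(p_j)$ supplies a $(q+1)$-chain of diameter $<D$ whose boundary is $\partial\sigma$. This exhibits $\partial\sigma$ as a boundary in $K^{D^-}$, so $\sigma$ is positive, contradicting negativity; hence $D\le R$. The main obstacle is exactly this filling step. It is the higher-dimensional counterpart of the elementary ``no long bridge'' argument for the minimum spanning tree ($q=0$), where a covered direction rules out a long merging edge; for general $q$ one must instead show that a direction-wise covered vertex cannot anchor a persistent $q$-cycle, and keeping the intermediate simplices' diameters strictly below $D$ is precisely what the small angular section (small $\delta$) and the interior-cone rerouting of Lemma \ref{lem:technical_geom} are designed to guarantee.
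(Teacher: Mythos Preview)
Your overall architecture --- assume a putative ``too large'' negative simplex, then exhibit it as part of a $(q+1)$-cycle at its own filtration time to contradict negativity --- is exactly the paper's strategy. Two points, however, separate your sketch from a proof.

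\textbf{The target inequality is too strong.} You aim for $\diam(\sigma)\le R$, but the diameter of $\sigma$ may be realised by an edge $\{x_{i},x_{j}\}$ not incident to $x$, and nothing in the cone covering \emph{around $x$} controls that edge directly. The paper only proves (and only needs) that every vertex of a negative $\sigma$ lies within Euclidean distance $R$ of $x$, i.e.\ $\max_i\|x-x_i\|\le R$, which immediately gives $\sigma\subset C(x,R)$. The ``furthermore'' clause then follows (up to an irrelevant multiplicative constant) via $\diam(\sigma)\le 2\max_i\|x-x_i\|\le 2R$. Framing the contradiction hypothesis as ``the farthest vertex $x_1$ from $x$ has $\|x-x_1\|>R$'' is what makes the geometry work; framing it as ``$\diam(\sigma)>R$'' does not.

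\textbf{The filling step is the whole proof, and it is missing.} Your ``the star of the $(p_j)$ supplies a $(q+1)$-chain of diameter $<D$ with boundary $\partial\sigma$'' is precisely the assertion to be proved, not an argument. The paper's construction is much more concrete and uses a \emph{single} point, not the whole directional covering: take $z\in\X_n$ in (a cone adjacent to) the cone hit by the segment $[x,x_1]$, and cone $\sigma$ over $z$, i.e.\ consider the $(q+2)$-simplex $\sigma\cup\{z\}$. Its boundary is $\sigma+\sum_{t\in\sigma}\sigma_t$ with $\sigma_t=(\sigma\setminus\{t\})\cup\{z\}$, so positivity of $\sigma$ reduces to showing $r(\sigma_t)\le r(\sigma)$ for every $t$, which in the Rips case means $\|z-v\|\le r(\sigma)$ for every vertex $v$ of $\sigma$. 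The estimates $\|z-x\|\le\sqrt d\,\eta R\le\|x-x_1\|$ and $\|z-x_1\|\le\|x-x_1\|$ follow from the small-angle choice of $z$. The delicate part is $\|z-x_i\|\le r(\sigma)$ for $i\ge2$: here the paper introduces the region $I(z)=\{t:\|z-t\|\ge\|x-t\|\text{ and }\|z-t\|\ge\|x_1-t\|\}$ and shows that for $\delta$ small enough $d(x,I(z))>\sqrt d\ge\|x-x_1\|$, so no $x_i$ lies in $I(z)$ and hence $\|z-x_i\|\le\max(\|x-x_i\|,\|x_1-x_i\|)\le r(\sigma)$. None of this is visible in your sketch, and introducing many $p_j$'s instead of one coning point only complicates both the chain and the diameter control. (A minor side remark: you ask for the chain to live in $K^{D^-}$; the paper only needs $K^{r(\sigma)}$, which is the correct target given the definition of negativity used.)
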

{\em Proof.}  To ease notation, denote $R_{\delta,\eta}(x,\X_n)$ by $R$. We are going to prove that all negative simplexes containing $x$ are included in $C(x,R)$, a fact that proves the two assertions of the lemma. First, if $\eta R\ge1/2$, then $C(x,R)$ contains $[0,1]^d$ and the result is trivial. So, assume that $\eta R < 1/2$, and consider  a $(q+1)$-simplex $\sigma=\{x,x_1,\dots,x_{q+1}\}$ that is not contained in $C(x,R)$. Assume without loss of generality that $x_1$ is the point in $\sigma$ maximizing the distance to $x$, which in particular means that $x_1$ is not in $C(x,R)$. The line $[x,x_1]$ hits $C(x,\eta R)$ at some cone $A_j^\delta(x, \eta R)$. 
By Lemma \ref{lem:technical_geom} and the definition of $R$,  if $A^\delta$ denotes the  union of $A_j^\delta(x,\eta R)$ and its adjacent cones in $C(x,\eta R)$, then there exists a point $z$ of $\X_n$ in $A^\delta \subset C(x,\eta R)$ and the angle $\angle xzx_1$ formed by $[z,x]$ and $[z,x_1]$ is in smaller than $c\delta$. Let us prove that all the $(q+1)$-simplexes $\sigma_t$ of the form $(\sigma \backslash \{t\}) \cup \{z\}$, for $t\in \sigma,$ have a filtration time smaller than $r(\sigma)$. If this is the case, then the cycle formed by the $\sigma_t$'s and $\sigma$ is contained in the complex at time $r(\sigma)$, meaning that $\sigma$ is not negative, concluding the proof. Therefore, it suffices to prove that $\|z-x\| \leq r(\sigma)$ and that $\|z-x_i\| \leq r(\sigma)$ for all $i$:
	\begin{itemize}
	\item  $\|z-x\| \leq \sqrt{d}\eta R \leq \|x-x_1\|$.  
	\item If $\angle xz x_1 < c\delta \le \pi/3$,
	\begin{align*}
	\|z-x_1\|^2 &= \|z-x\|^2 + \|x_1-x\|^2 -2 \langle z-x, x_1-x \rangle \\ 
	 &\leq \|z-x\|^2 + \|x_1-x\|^2 - \|z-x\|\|x_1-x\| \leq \|x_1-x\|^2 \leq r(\sigma)^2.
\end{align*}
\end{itemize}
\begin{figure}
\centering
\includegraphics[scale=.55]{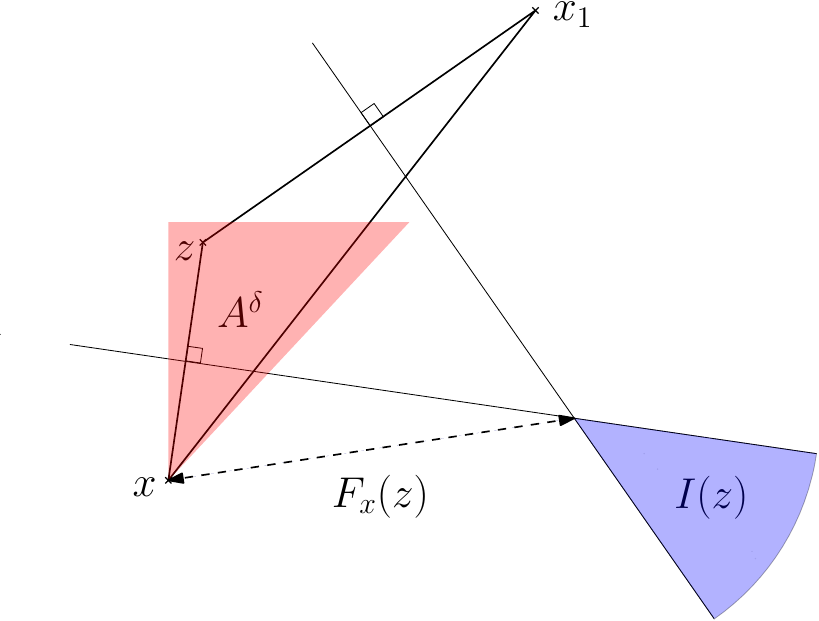}
\caption{\small \it The geometric construction used in the proof of Lemma \ref{lem:geom_lemma}. The red region represents $A^\delta$ whereas the blue region represents $I(z)$ for some point $z$ in $A^\delta$. If $\delta$ is made sufficiently small, the distance $F_x(z)$ between $x$ and $I(z)$ can be made arbitrarily large.}
\label{fig:rips_case}
\end{figure}
For $i\geq 2$, we have $\|x-x_i\| \leq \|x-x_1\|$ by assumption. Let $I(z)$ denote the set of all $t \in \R^d$ with $\|z-t\| \geq \|x-t\|$ and $\|z-t\| \geq \|x_1-t\|$, i.e.\  $I(z)$ is the intersection of two half spaces (see Figure \ref{fig:rips_case}). Let $F_x(z) = d(I(z),x)$. If we find a $\delta$ with $F_x(z) > \|x - x_1\|$ for all $z\in A^\delta$, then no $x_i$ is in $I(z)$, whatever the position of $z \in A^\delta$ is, meaning that all $x_i$'s satisfy $\|z-x_i\| \leq \max\{\|x-x_i\|, \|x_1-x_i\|\} \leq r(\sigma)$, concluding the proof. The method of Lagrange multipliers shows that $F_x(z)^2$ is a continuous function of $z$, with a known (but complicated) expression. A straightforward study of this expression shows that for $\delta$ small enough, the minimum of $F_x$ on $A^\delta$ can be made arbitrarily large: therefore, there exists $\delta$ such that $F_x(z)>\sqrt{d}\geq \|x-x_1\|,$ for all $z \in A^\delta$. 
\hfill $\square$

\subsubsection{Construction for the \texorpdfstring{\v Cech}{Cech} filtration}
A similar construction works for the \v Cech filtration, but the arguments are slightly different. First, note that each negative simplex $\sigma$ in the \v Cech filtration is such that there exists a subsimplex $\sigma^\prime$ of $\sigma$ that enters in the filtration at the same time $r(\sigma)$ as $\sigma$, and so that $r(\sigma)$ is the circumradius of $\sigma^\prime$. Then
\begin{align*}
N_q(\X_n,M) &= \sum_{\sigma \in \CC_{q+1}(\X_n)} \ones\{ \sigma \text{ negative},\ r(\sigma)\geq M\} \\
&= \sum_{\sigma \in \CC_{q+1}(\X_n)} \frac{1}{\# \sigma^\prime}\sum_{i=1}^n\ones\{ \sigma \text{ negative},\ r(\sigma)\geq M,\ X_i \in \sigma^\prime \} \\
&\leq \sum_{i=1}^n \#\Xi'(X_i,\X_n),
\end{align*}
where $\Xi'(x,\X_n)$ the set of negative $(q+1)$-simplexes $\sigma$ in the \v Cech filtration $\CC(\X_n)$ with $r(\sigma) \geq M$ and $x \in \sigma^\prime$.
\begin{lemma}\label{lem:geom_lemma_cech} For $\eta = \min\{1/\sqrt{d},1/2\}$ and some $\delta=\delta(d)>0$, each simplex $\sigma$ of $\Xi'(x,\X_n)$ is included in $C(x,R_{\delta,\eta}(x,\X_n))$. Furthermore, $\Xi'(x,\X_n)$  is empty if $R_{\delta,\eta}(x,\X_n) > M_n$.
\end{lemma}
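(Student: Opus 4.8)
\textbf{Plan of proof for Lemma \ref{lem:geom_lemma_cech}.} The plan is to adapt the argument of Lemma \ref{lem:geom_lemma} to the \v Cech setting, using the same cone construction around $x$ and the same radius $R \defeq R_{\delta,\eta}(x,\X_n)$ with $\eta = \min\{1/\sqrt d, 1/2\}$. The key structural difference is that for the \v Cech filtration the filtration time $r(\sigma)$ of a negative simplex equals the circumradius of a subsimplex $\sigma'$ with $x \in \sigma'$, rather than the diameter of $\sigma$; this forces the geometric collapsing argument to be phrased in terms of circumradii and circumcenters. As before, the two assertions of the lemma both follow once we show that every negative simplex $\sigma \in \Xi'(x,\X_n)$ is contained in $C(x,R)$: indeed, the emptiness claim when $R > M_n$ is immediate from $r(\sigma) \geq M = n^{1/d} M_n$ combined with containment in a cube of half-side $R < M_n$, which caps the circumradius below $M$.

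First I would handle the trivial case $\eta R \geq 1/2$, where $C(x,R) \supset [0,1]^d$ and containment is automatic. Assuming $\eta R < 1/2$, take a simplex $\sigma = \{x, x_1, \dots, x_{q+1}\}$ not contained in $C(x,R)$ and let $x_1$ realize the largest distance from $x$, so $x_1 \notin C(x,R)$. Following the Rips proof verbatim, the segment $[x,x_1]$ meets $C(x,\eta R)$ in some cone $A_j^\delta(x,\eta R)$; by Lemma \ref{lem:technical_geom} and the definition of $R$, the union $A^\delta$ of that cone with its adjacent cones contains a point $z \in \X_n$, and the angle $\angle xzx_1$ can be made smaller than any prescribed $c\delta$. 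The goal is again to exhibit, for each $t \in \sigma$, a filtered cycle showing $\sigma$ is not negative: I would show that every simplex $\sigma_t = (\sigma \setminus \{t\}) \cup \{z\}$ has filtration time at most $r(\sigma)$, so that $\sigma$ together with the $\sigma_t$ bounds a chain already present at time $r(\sigma)$.

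The hard part will be controlling circumradii rather than pairwise distances. In the Rips case the filtration time is simply the maximal pairwise distance, so it suffices to bound $\|z - x_i\| \leq r(\sigma)$; for \v Cech one must instead argue that inserting $z$ does not increase the circumradius of the relevant subsimplex beyond $r(\sigma)$. I would recycle the two geometric estimates from Lemma \ref{lem:geom_lemma}: the bound $\|z-x\| \leq \sqrt d\, \eta R \leq \|x - x_1\|$, and the angle estimate giving $\|z - x_1\| \leq \|x_1 - x\|$ once $\angle xzx_1 < c\delta \leq \pi/3$, together with the region $I(z)$ of points farther from $z$ than from both $x$ and $x_1$, and the continuity/Lagrange-multiplier analysis showing $F_x(z) = d(I(z),x) > \sqrt d \geq \|x-x_1\|$ for $\delta$ small enough; this keeps every $x_i$ outside $I(z)$, so $\|z - x_i\| \leq \max\{\|x - x_i\|, \|x_1 - x_i\|\}$. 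The remaining step, specific to \v Cech, is to translate these distance bounds into a circumradius bound: since replacing a vertex by $z$ keeps all mutual distances within those already attained by the points of $\sigma$, the smallest enclosing ball of each $\sigma_t$ has radius at most that of $\sigma'$, hence $r(\sigma_t) \leq r(\sigma)$. I expect verifying this monotonicity of the circumradius under the vertex swap — ensuring that the enclosing ball of $\sigma$ at radius $r(\sigma)$ also covers $z$ and the swapped simplices — to be the only genuinely new point, with the rest carrying over from the Rips argument.
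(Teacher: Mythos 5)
Your high-level architecture matches the paper's: reduce both assertions to showing every $\sigma\in\Xi'(x,\X_n)$ lies in $C(x,R)$ with $R\defeq R_{\delta,\eta}(x,\X_n)$, dismiss the case $\eta R\geq 1/2$ as trivial, produce a witness point $z\in\X_n$ via the cone construction and Lemma \ref{lem:technical_geom}, and conclude that $\sigma$ is positive because the boundary of $\sigma\cup\{z\}$ provides a cycle containing $\sigma$ at time $r(\sigma)$. But the step you yourself flag as ``the only genuinely new point'' is exactly where your argument breaks, and it cannot be patched as stated. For the \v Cech filtration, $r(\tau)$ is the radius of the smallest enclosing ball of $\tau$, and this radius is \emph{not} monotone under pairwise-distance control: three points at mutual distance $s$ have enclosing radius $s/\sqrt{3}$, strictly larger than the radius $s/2$ realized by a configuration with the same diameter. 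So ``replacing a vertex by $z$ keeps all mutual distances within those already attained'' does not imply $r(\sigma_t)\leq r(\sigma)$. What is actually needed is a single point within distance $r(\sigma)$ of every vertex of $\sigma\cup\{z\}$; the natural candidate is the point $y$ with $\{y\}=\bigcap_i \overline{B}(x_i,r(\sigma))$ (the circumcenter of $\sigma'$), and your construction gives no control on $\|z-y\|$. Indeed, you choose $z$ in a thin angular sector around the segment $[x,x_1]$, but the direction from $x$ to $y$ can differ from the direction to the farthest vertex $x_1$ by a large angle (a Jung-type computation allows roughly $\arccos\bigl(1/(2\sqrt{2})\bigr)\approx 69^\circ$), in which case $\|z-y\|>r(\sigma)$ is perfectly possible and the swapped simplex can have strictly larger filtration time.

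The paper's proof repairs precisely this by aiming the cone argument at the circumcenter instead of the farthest vertex. Since $x\in\sigma'$, the point $y$ satisfies $\|x-y\|=r(\sigma)$, and $r(\sigma)>R/2\geq \eta R$ when $\sigma\not\subset C(x,R)$, so the segment $[x,y]$ (not $[x,x_1]$) exits $C(x,\eta R)$ through some cone $A_j^\delta(x,\eta R)$ whose base meets $[0,1]^d$; Lemma \ref{lem:technical_geom} then yields $z\in\X_n\cap C(x,\eta R)$ making a small angle with the ray toward $y$, and the same cosine estimate as in the Rips case, using $\|z-x\|\leq\sqrt{d}\,\eta R\leq\|x-y\|$, gives $\|z-y\|\leq r(\sigma)$. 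Then $y$ lies in $\overline{B}(v,r(\sigma))$ for every $v\in\sigma\cup\{z\}$, so by the nerve theorem all faces of $\sigma\cup\{z\}$ are present at time $r(\sigma)$ and $\sigma$ is positive. Note also that once $y$ is the target, the $I(z)$/$F_x$ Lagrange-multiplier analysis you import from the Rips proof is unnecessary: it only bounds distances to the other vertices $x_i$, which is the wrong currency here. A minor further point: your justification of the emptiness claim is internally inconsistent on the scaling (you invoke both $R>M_n$ and $R<M_n$ in one sentence, mixing $M$ with $M_n=n^{-1/d}M$); the intended content is simply that once every $\sigma\in\Xi'(x,\X_n)$ is trapped in $C(x,R)$, its filtration time is at most a constant multiple of $R$, which is incompatible with the persistence threshold defining $\Xi'$ unless $R$ exceeds (a constant times) $M_n$.
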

\begin{proof}
Recall the definition of $C(x,r)$ and the partition of $C(x,r)$ into the cones $(A_j^\delta(x,r))_{j=1\dots Q}$ with corresponding  bases $(C_j^\delta(x,r))_{j=1\dots Q}$. As above, denote $R_{\delta,\eta}(x,\X_n)$ by $R$. Let $\sigma = \{x,x_1,\dots, x_{q+1}\}$ denote a $(q+1)$-simplex not included in $C(x,R)$, with $r(\sigma) \ge M$. As in the Rips case, the result is trivial if $\eta R\geq 1/2$. By definition of the \v Cech filtration, the intersection $\bigcap_{i=0}^{q+1} \overline{B}(x_i,r(\sigma))$ consists of a singleton $\{y\}$. If there is a point $z$ of $\X_n$ in $B(y,r(\sigma))$, then, by the nerve theorem applied to $\sigma \cup \{z\}$, we can conclude with similar arguments as in Lemma \ref{lem:geom_lemma} that $\sigma$ is positive in the filtration, meaning that every negative $\sigma \in \Xi(x,\X_n)$ has to be included in $C(x,R)$. 

Let us prove the existence of such a $z$. As $x \in \sigma^\prime$, the distance between $x$ and $y$ is equal to $r(\sigma) \geq R$. Therefore, the line $[x,y]$ hits $C(x,\eta R)$ in some cone $A_j^\delta(x, \eta R)$, whose base $C_j^\delta(x,r)$ intersects $[0,1]^d$, as it intersects $[x_0,y].$ As in the Rips case, there exists a point $z$ of $\X_n$ in $C(x,\eta R)$ such that the angle made by $z$, $x$ and $y$ is smaller than $c\delta$. As before, it can then be argued that $\|y-z\| \leq r(\sigma)$, concluding the proof.
  \end{proof}

\begin{remark} Note that the fact that the support of $\kappa$ is the cube only enters the picture through the geometric arguments used here and in the above proof. Some more refine work is needed to show that a similar construction holds when the cube is replaced by a convex body.
\end{remark}

\paragraph{} In the following, fix $\eta = \min\{1/\sqrt{d},1/2\},$ choose $\delta$ sufficiently small, and let $R_{\delta,\eta}(x,\X_n), A_j^\delta(x, r)$ and $C_j^\delta(x,r)$ be denoted by $R(x,\X_n), A_j(x, r)$ and $C_j(x,r)$ respectively. Both $\Xi(x,\X_n)$ and $\Xi'(x,\X_n)$ are included in the set of $(q+1)$-tuples of $\X_n \cap C(x,R(x,\X_n))$, so that the following inequality holds for either the Rips or the \v Cech filtration:
\begin{equation}
N_q(\X_n,M) \leq \sum_{i=1}^n \ones\{R(X_i,\X_n) >M_n\} \left(\#(\X_n \cap C(X_i,R(X_i,\X_n)))\right)^{q+1}.
\end{equation}
Denote $R(X_1,\X_n)$ by $R_n$. As we will see, an estimate of the tail of $R_n$ is sufficient to get a control of $N_q(\X_n,M)$. The probability $P(R_n>t)$ is bounded by the probability that one of the cones pointing at $X_1$, of radius $t/2$, wholly included in the cube $[0,1]^d,$ is empty. Conditionally on $X_1$, this probability is exactly the probability that a binomial process with parameters $n-1$ and $\kappa$ does not intersect this cone. Therefore,
\begin{equation}\label{eq:unifBoundR}
P(R_n>t) \leq c \exp(-cn t^d),
\end{equation}
and we obtain, for $\lambda>0$, 
\begin{align}
 E\left[e^{\lambda  nR_n^d}\right] &= \int_1^\infty P(\lambda  nR_n^d > \ln(t))dt \nonumber \\
 &\leq \int_1^\infty c\exp\left(-c \frac{\ln(t)}{\lambda}\right) dt < \infty \mbox{ if } \lambda < c/2. \label{eq:MGF_R}
\end{align}
\begin{lemma}\label{lem:bound_num_points}
The random variable $\#(\X_n \cap C(X_1,R_n))$ has exponential tail bounds: for $t>0$,
\begin{equation*}
P(\#(\X_n \cap C(X_1,R_n))>t )\leq c\exp(-ct).
\end{equation*}
\end{lemma}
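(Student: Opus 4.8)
The plan is to decouple the random radius $R_n = R(X_1,\X_n)$ from the random count by comparing $R_n$ against a single, well-chosen deterministic threshold, and then applying a union bound. Since the right-hand side of \eqref{eq:tail_bound} is independent of $n$, recall that we may assume $\X_n$ is the binomial process. Write $N = \#(\X_n \cap C(X_1,R_n))$, and for a \emph{deterministic} radius $r$ set $V_r = \#(\X_n \cap C(X_1,r))$, so that $N = V_{R_n}$ and $r \mapsto V_r$ is pathwise nondecreasing. Fix a scale $r^*$ with $(r^*)^d = \beta t/n$ for a small constant $\beta>0$ to be chosen, and split according to whether $R_n$ exceeds $r^*$:
\[
\P(N > t) \le \P(R_n > r^*) + \P\big(V_{R_n} > t,\ R_n \le r^*\big) \le \P(R_n > r^*) + \P(V_{r^*} > t),
\]
where the second inequality is the purely pathwise inclusion $\{V_{R_n} > t,\ R_n \le r^*\} \subseteq \{V_{r^*} > t\}$ coming from monotonicity of $V$. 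The crucial point is that this step requires no independence between $R_n$ and the configuration, only monotonicity.

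For the first term, \eqref{eq:unifBoundR} applies directly with radius $r^*$, giving $\P(R_n > r^*) \le c\exp(-c n (r^*)^d) = c\exp(-c\beta t)$. For the second term, I would condition on $X_1 = x$ and observe that $V_{r^*}$ is then $1$ plus a $\mathrm{Bin}(n-1,p_x)$ variable, where $p_x = \int_{C(x,r^*)}\kappa \le \sup\kappa\,(2r^*)^d$ uniformly in $x$. Hence $V_{r^*}$ is stochastically dominated, uniformly in $X_1$, by $1 + \mathrm{Bin}\big(n-1,\,\sup\kappa\,(2r^*)^d\big)$, a variable whose mean is at most $1 + 2^d\sup\kappa\, n(r^*)^d = 1 + 2^d\sup\kappa\,\beta t$. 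Choosing $\beta$ small enough that this mean stays below $t/2$ for $t$ large (small $t$ being absorbed into the constant $c$ since probabilities are at most $1$), a standard Chernoff bound for the upper tail of a binomial yields $\P(V_{r^*} > t) \le \exp(-ct)$. Combining the two estimates gives $\P(N>t) \le c\exp(-c't)$, as claimed.

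The only delicate point, and the one I would watch most carefully, is the calibration of $\beta$: the radius tail pushes toward $r^*$ \emph{large} (it improves the exponent $c\beta t$), whereas the count tail forces $r^*$ \emph{small} enough that the expected number of points of $\X_n$ in $C(X_1,r^*)$ remains safely below $t$. These demands are compatible precisely because any positive rate suffices in the first term, so $\beta$ may be taken as small as the density bound $\sup\kappa$ requires without destroying the exponential decay of $\P(R_n>r^*)$. In other words, it is the genuinely exponential tail of $R_n$ established in \eqref{eq:unifBoundR} that makes this threshold argument close.
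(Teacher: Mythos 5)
Your proof is correct, and it takes a genuinely different route from the paper's. You decouple the random radius from the count via a deterministic threshold $r^*$ with $(r^*)^d=\beta t/n$, use the purely pathwise inclusion $\{V_{R_n}>t,\ R_n\le r^*\}\subseteq\{V_{r^*}>t\}$, and then only ever need the \emph{unconditional} law of $\X_n$: the tail bound \eqref{eq:unifBoundR} evaluated at $r^*$ (legitimate, since that bound is uniform in $t$, $n$ and in the conditioning on $X_1$), and a plain binomial Chernoff bound for $V_{r^*}$ given $X_1$. The paper argues differently: it conditions on $X_1$ and $R_n$, distinguishes the two scenarios that can realize $R_n$ (a sample point on the boundary cone $C_{j_0}(X_1,\eta R_n)$, or an empty cone not wholly contained in the cube), identifies the conditional configuration as a conditioned binomial process so that $\#(\X_n\cap C(X_1,R_n))=Q_0+Z$ with $Z$ binomial with success probability at most $cR_n^d$, applies a Chernoff bound conditionally on $R_n$, and finally integrates out $R_n$ using the exponential-moment bound \eqref{eq:MGF_R} for $nR_n^d$. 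Your version buys simplicity and robustness: the thresholding step requires no independence, only monotonicity, so it entirely sidesteps the delicate description of the law of $\X_n$ conditioned on $R_n$ (the cone bookkeeping with $Q_0$ and the event $E$, and the second boundary case, which is the most error-prone part of the paper's argument). The paper's route, at the price of that conditioning, establishes the stronger statement \eqref{eq:MGF_R} along the way and avoids any $t$-dependent choice of scale. Your calibration of $\beta$ is also sound: with the mean of $1+\mathrm{Bin}(n-1,p_x)$ kept below $t/2$, the moment generating function at a fixed parameter (say $\theta=1$) gives $\exp\left(\mu(e-1)-t\right)\le\exp(-ct)$, and small $t$ is absorbed into the multiplicative constant, so both terms decay exponentially uniformly in $n$, as the lemma requires.
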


\begin{proof}
Conditionally on $X_1$ and $R_n$, two possibilities may occur. In the first one, the cube centered at $X_1$ of radius $\eta R_n$ contains a point on its boundary, in the cone $A_{j_0}(X_1,\eta R_n)$. Denote this event $E$ and let $Q_0$ be the number of cones wholly included in the support.  The configuration of $\X_n$ is a binomial process conditioned to have at least one point in the cones $A_j(X_1,\eta R_n)$ wholly included in the cube, except for $j=j_0$, and a point on $C_{j_0}(X_1,\eta R_n)$. In this case, $\#(\X_n \cap C(X_1,R_n))$ is equal to $Q_0+ Z$, where $Z$ is a binomial variable of parameters $n-Q_0$ and 
\[\int_{C(X_1,R_n)\backslash A_{j_0}(X_1,\eta R_n)} \kappa(x)dx \leq c R_n^d.\]Therefore, for $\beta>0$, using a Chernoff bound and a classical bound on the moment generating function of a binomial variable:
\begin{align*}
P(\#(\X_n \cap C(X_1,R_n))>t|R_n,Q_0,E ) &\leq P(Q_0+Z>t|R_n,Q_0) \\
 &\leq \frac{e^{\beta Q_0}E[e^{\beta Z}|R_n]}{e^{\beta t}} \\
&\leq \frac{e^{\beta Q}e^{cnR_n^d(e^\beta-1)}}{e^{\beta t}},
\end{align*}
where $Q$ is the number of elements in the partition of $C(x,R)$.
 Take $\beta$ sufficiently small so that $E[e^{cnR_n^d(e^\beta-1)}]<\infty$ (such a $\beta$ exists by equation \eqref{eq:MGF_R}). We have the conclusion in this first case.

   The other possibility is that there exists a cone not wholly included in the cube containing no point of $\X_n$. In this case, the configuration of $\X_n$ is a binomial process conditioned on having at least one point in the cones $A_j(X_1,R_n)$ wholly included in cube and no point in a certain cone not wholly included in the cube. Likewise, a similar bound is shown.
  \end{proof}

We are now able to finish the proof of Proposition~\ref{prop:bound_size_diagrams}: for $p\geq 1$,
\begin{align*}
&E[\mu_n(U_M)^p]= E\left[\left(\frac{N_q(\X_n,M)}{n}\right)^p\right] \\
&\leq\frac{1}{(q+2)^p}E\left[\left(\frac{ \sum_{i=1}^n \ones\{R(X_i,\X_n) >M_n\} \left(\#(\X_n \cap C(X_i,R(X_i,\X_n)))\right)^{q+1}}{n}\right)^p\right] \\
&\leq \frac{1}{(q+2)^p}E\left[\ones\{R_n> M_n\} \#(\X_n \cap C(X_1,R_n))^{p(q+1)}\right] \text{ by Jensen's inequality}\\
&\leq \frac{1}{(q+2)^p}\left(P(R_n> M_n) E[\#(\X_n \cap C(X_1,R_n))^{2p(q+1)}]\right)^{1/2}.
\end{align*}
Lemma \ref{lem:bound_num_points} implies that, for $p'>0$,
\begin{align*}
E\left[\#(\X_n \cap C(X_1,R_n))^{p'}\right]&\leq \int_1^\infty ce^{-Ct^{1/p'}}dt = \frac{p'c}{C^{p'}}\int_1^\infty u^{p'-1}e^{-u}du = \frac{c}{C^{p'}} (p')!.
\end{align*}
Therefore, for $q\geq 1$,
\begin{align*}
E[\mu_n(U_M)^{p/(q+1)}] &\leq c\frac{1}{(q+2)^{p/(q+1)}}\exp(-cM^d)\left((2p)!C^{-2p}\right)^{1/2} \leq \exp(-cM^d)c^pp!.
\end{align*}
To finish the proof, we use a simple lemma relating the moments of a random variable to its tail.
\begin{lemma}\label{lem:chernof}
Let $X$ be a positive random variable such that there exists constants $A,C>0$ with
\begin{equation}
E[X^k]\leq AC^k k!.
\end{equation}
Then, there exists a constant $c>0$ such that $\forall x>0, P(X>x)\leq A \exp(-cx)$.
\end{lemma}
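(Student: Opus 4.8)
The plan is to prove the standard moment-to-tail implication of Lemma~\ref{lem:chernof} via a Chernoff-type argument: exponentiate, use the moment hypothesis to control the moment generating function on a suitable interval, and then optimize. First I would fix $\lambda>0$ to be chosen later with $\lambda < 1/C$, and expand the exponential moment using the series
\begin{equation*}
E[e^{\lambda X}] = \sum_{k=0}^\infty \frac{\lambda^k}{k!} E[X^k] \leq \sum_{k=0}^\infty \frac{\lambda^k}{k!} A C^k k! = A \sum_{k=0}^\infty (\lambda C)^k = \frac{A}{1-\lambda C},
\end{equation*}
where the geometric series converges precisely because $\lambda C < 1$. The interchange of expectation and sum is justified by monotone convergence since $X$ is positive and all terms are nonnegative. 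This bounds the MGF of $X$ by a finite constant on the whole interval $(0,1/C)$.

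Next I would apply Markov's inequality to the random variable $e^{\lambda X}$: for any $x>0$,
\begin{equation*}
P(X>x) = P(e^{\lambda X} > e^{\lambda x}) \leq e^{-\lambda x} E[e^{\lambda X}] \leq \frac{A}{1-\lambda C} e^{-\lambda x}.
\end{equation*}
To obtain a clean statement it suffices to pick one admissible value of $\lambda$, for example $\lambda = 1/(2C)$, which lies in $(0,1/C)$ and gives $E[e^{\lambda X}] \leq 2A$ and hence $P(X>x) \leq 2A\,e^{-x/(2C)}$. Setting $c \defeq 1/(2C)$ and absorbing the factor $2$ into the constant (or noting that $2A\,e^{-cx} \le A\,e^{-c'x}$ for a slightly smaller $c'$ once $x$ is not too small) yields the desired bound of the form $A\exp(-cx)$; the cleanest route is simply to keep the constant $2A$ in front, since the statement only requires \emph{some} constant $c>0$ and the leading constant $A$ up to an absolute factor.

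There is essentially no hard obstacle here, which is consistent with the lemma being a routine technical tool invoked to convert the factorial-growth moment bounds derived above (such as $E[\mu_n(U_M)^{p/(q+1)}] \leq \exp(-cM^d)c^p p!$) into the exponential tail bound \eqref{eq:tail_bound}. The only point requiring a word of care is the justification of the termwise summation when bounding $E[e^{\lambda X}]$, which follows from Tonelli/monotone convergence for the nonnegative series; once the MGF is finite on an interval containing $0$ in its closure from the right, the Chernoff bound is immediate. If one prefers to match the exact form $A\exp(-cx)$ with leading constant exactly $A$, one can instead track the dependence more carefully by choosing $\lambda$ as a function of $x$ and optimizing, but this is unnecessary for the application, where only exponential decay with the stated constant $A$ up to a universal factor is needed.
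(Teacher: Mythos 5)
Your proof is correct and follows essentially the same route as the paper: expand $E[e^{\lambda X}]$ as a series, bound it via the moment hypothesis and a geometric series at $\lambda = 1/(2C)$, then apply a Chernoff/Markov bound. You are in fact slightly more careful than the paper, whose displayed computation asserts $A\sum_{k\geq 0}(\lambda C)^k = A$ at $\lambda = 1/(2C)$ when the sum is actually $2A$ --- a harmless slip, since (as you note) the factor $2$ is absorbed by adjusting $c$, and only some exponential tail bound is needed for the application.
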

\begin{proof}
Fix $\lambda=\frac{1}{2C}$. The moment generating function of $X$ in $\lambda$ is bounded by:
\[ E\left[e^{\lambda X}\right] = \sum_{k\geq 0} \frac{\lambda^k E[X^k]}{k!} \leq  A\sum_{k\geq 0} \lambda^k C^k=A.\]
Therefore, using a Chernoff bound, $P(X>x) \leq A\exp(-\lambda x)$.
  \end{proof}
Apply Lemma~\ref{lem:chernof} to $X=\mu_n(U_M)^{1/(q+1)}$ to obtain the assertion of Proposition~\ref{prop:bound_size_diagrams}. 

\subsection{Proof of Theorem \ref{thm:main_cv}}
\subsubsection{Step 1: Convergence for functions vanishing on the diagonal}

The first step of the proof is to show that the convergence holds  $C_0(\upperdiag)$, the set of continuous bounded functions vanishing of the diagonal.  The crucial part of the proof consists in using Proposition \ref{prop:bound_size_diagrams}, which bounds the total number of points in the diagrams. An elementary lemma from measure theory is then used to show that it implies the a.s. convergence for vanishing functions. We say that a sequence of measures $(\mu_n)_{n\geq 0}$ converges $C_0$-vaguely to $\mu$ if $\mu_n(\phi)\to \mu(\phi)$ for all functions $\phi$ in $C_0(\upperdiag)$.

\begin{lemma}\label{lem:measureLemma}
Let $E$ be a locally compact Hausdorff space. Let $(\mu_n)_{n\geq 0}$ be a sequence of Radon measure on $E$ which converges $C_c$-vaguely to some measure $\mu$. If $ \sup_{n}|\mu_n| < \infty$, then $(\mu_n)_{n\geq 0}$ converges $C_0$-vaguely to $\mu$.
\end{lemma}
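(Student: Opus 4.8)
The plan is a standard uniform-approximation argument: upgrade from test functions with compact support to test functions vanishing at infinity by cutting off and controlling the truncation error uniformly in $n$, using the mass bound $S \defeq \sup_n |\mu_n|(E) < \infty$. Throughout I read $C_0(E)$ as the space of continuous functions vanishing at infinity, so that each $\phi \in C_0(E)$ is a uniform limit of elements of $C_c(E)$, and the only additional input beyond $C_c$-vague convergence is the hypothesis on the masses.

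First I would record that the limit $\mu$ also has finite total variation, with $|\mu|(E) \le S$. Indeed, for every $\psi \in C_c(E)$ we have $|\mu(\psi)| = \lim_n |\mu_n(\psi)| \le \|\psi\|_\infty \sup_n |\mu_n|(E) = S\,\|\psi\|_\infty$; since the total variation of a Radon measure equals the norm of the induced functional $\psi \mapsto \mu(\psi)$ on $C_c(E)$ (Riesz--Markov, or simply $\mu(E) = \sup\{\mu(\psi): \psi\in C_c(E),\ 0\le\psi\le1\}$ in the positive case), this gives $|\mu|(E) \le S$. This finiteness is what makes the truncation error controllable on the limiting side as well.

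Next I would fix $\phi \in C_0(E)$ and $\varepsilon > 0$. By definition there is a compact $K \subset E$ with $|\phi| < \varepsilon$ off $K$, and by Urysohn's lemma for locally compact Hausdorff spaces there is $\chi \in C_c(E)$ with $0 \le \chi \le 1$ and $\chi \equiv 1$ on $K$. Then $\phi\chi \in C_c(E)$ and $\|\phi - \phi\chi\|_\infty = \|\phi(1-\chi)\|_\infty \le \varepsilon$, since $1-\chi$ vanishes on $K$ while $|\phi| < \varepsilon$ off $K$. I would then split
\begin{equation}
|\mu_n(\phi) - \mu(\phi)| \le |\mu_n(\phi - \phi\chi)| + |\mu_n(\phi\chi) - \mu(\phi\chi)| + |\mu(\phi\chi - \phi)|,
\end{equation}
bounding the two outer terms by $\|\phi-\phi\chi\|_\infty\,|\mu_n|(E) \le \varepsilon S$ and $\|\phi-\phi\chi\|_\infty\,|\mu|(E) \le \varepsilon S$ respectively, while the middle term tends to $0$ by $C_c$-vague convergence, as $\phi\chi \in C_c(E)$. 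Hence $\limsup_n |\mu_n(\phi) - \mu(\phi)| \le 2\varepsilon S$, and letting $\varepsilon \to 0$ yields $\mu_n(\phi) \to \mu(\phi)$.

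There is no deep obstacle; the scheme is routine density/approximation. The one place where the hypothesis is genuinely needed — and the step I would treat most carefully — is the uniform control of the truncation error, both along the sequence (first term, via $\sup_n |\mu_n|(E) < \infty$) and in the limit (third term, via $|\mu|(E) \le S$). Without the mass bound the cutoff $\phi \mapsto \phi\chi$ could discard an unbounded amount of mass escaping toward infinity and the conclusion would fail; the finiteness of $S$ is precisely what excludes this.
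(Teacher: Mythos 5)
Your proof is correct and follows essentially the same route as the paper's: a Urysohn-type cutoff $\chi$ (the paper uses a sequence $h_q \to 1$), the same three-term triangle inequality, the uniform mass bound controlling the two truncation terms, and $C_c$-vague convergence handling the middle term. Your preliminary verification that $|\mu|(E) \le \sup_n |\mu_n|$ is a small improvement in rigor, since the paper uses the finiteness of $|\mu|$ tacitly in its bound $(\sup_n |\mu_n| + |\mu|)\varepsilon$ without justifying it.
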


\begin{proof}Let $(h_q)$ be a sequence of functions with compact support converging to $1$ and let $\phi \in C_0(E)$. Fix $\varepsilon >0$. By definition of $C_0(E)$, there exists a compact set $K_\varepsilon$ such that $f$ is smaller than $\varepsilon$ outside of $K_\varepsilon$. For $q$ large enough, the support of $h_q$ includes $K_\varepsilon$. Let $\phi_q = \phi \cdot h_q$. Then,
\begin{align*}
|\mu_n(\phi)-\mu(\phi)| &\leq |\mu_n(\phi) - \mu_n(\phi_q)| + |\mu_n(\phi_q) - \mu(\phi_q)| + |\mu(\phi_q) - \mu(\phi)| \\
&\leq (\sup_{n}|\mu_n| +|\mu|) \varepsilon + |\mu_n(\phi_q) - \mu(\phi_q)|.
\end{align*}
As $(\mu_n)_n$ converges vaguely to $\mu$, the last term of the sum converges to $0$ when $\varepsilon$ is fixed. Hence, we have $\limsup_{n\to \infty}|\mu_n(\phi)-\mu(\phi)| \leq \left(\sup_{n}|\mu_n| +|\mu|\right) \varepsilon $. As this holds for all $\varepsilon > 0$, $\mu_n(\phi)$ converges to $\mu(\phi)$.
  \end{proof}

Taking $M=0$ in Proposition \ref{prop:bound_size_diagrams}, we see that $\sup_n E[|\mu_n|] <\infty$. Therefore, the $C_0$-vague convergence of $E[\mu_n]$ is shown in the binomial setting. To show that the convergence also holds almost surely for $|\mu_n|$, we need to show that $\sup_n |\mu_n| <\infty$. For this, we use concentration inequalities. We do not show concentration inequalities for $|\mu_n|$ directly. Instead, we derive concentration inequalities for $\sum_{i=1}^n \#(\X_n \cap C(X_i,R(X_i,\X_n)))^{q+1}$, which is a majorant of $|\mu_n|$. Recall that $R(X_i,\X_n)$ is defined as the smallest radius $R$ such that, for some fixed parameter $\eta>0$, and for each $j=1\dots Q$, $A_j(X_i,\eta R)$, either contains a point of $\X_n$ different than $X_i$, or is not contained in the cube. To ease the notations, we denote $R(X_i,\X_n)$ by $R_{i,n}$.

\begin{lemma}\label{lem:concentration_bound}
Fix $M \geq 0$ and define $Z_n^M = \sum_{i=1}^n \#(\X_n \cap C(X_i,R_{i,n}))^q \ones\{R_{i,n}\geq M\}$. Then, for every $\varepsilon >0$, there exists a constant $c_\varepsilon >0$ such that
\begin{equation}\label{eq:concentration_bound}
    P(|Z_n^M-E[Z_n^M]|>t) \leq \frac{n^{\frac{3}{2}+\varepsilon}}{t^3}\exp(-c_\varepsilon n^{-1}M^d).
\end{equation}
The constant $c_{\varepsilon}$ depends on $\varepsilon,d,q$ and $\kappa$.
\end{lemma}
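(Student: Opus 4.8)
The plan is to obtain the polynomial tail in $t$ from Markov's inequality applied to a centered moment of order three, and to extract the exponential factor in $M^d$ from the radius tail bound \eqref{eq:unifBoundR}. Writing $Y_i \defeq \#(\X_n \cap C(X_i, R_{i,n}))^q \ones\{R_{i,n}\geq M\}$ and $\bar Y_i \defeq Y_i - E[Y_i]$, so that $Z_n^M - E[Z_n^M] = \sum_{i=1}^n \bar Y_i$, I would first reduce the statement to a bound of the form $E\big|\sum_i \bar Y_i\big|^3 \leq n^{3/2+\varepsilon}\exp(-c_\varepsilon n^{-1}M^d)$; then \eqref{eq:concentration_bound} follows at once from $P(|Z_n^M - E[Z_n^M]|>t)\leq t^{-3}E|Z_n^M - E[Z_n^M]|^3$.

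The heart of the argument is the \emph{local dependence} of the scores $Y_i$. By the geometric analysis of Lemmas \ref{lem:geom_lemma} and \ref{lem:geom_lemma_cech}, each $Y_i$ is determined by the restriction of $\X_n$ to $C(X_i, R_{i,n})$ (together with $R_{i,n}$); hence $\bar Y_i$ and $\bar Y_j$ are independent whenever their regions of dependence are disjoint. This endows $\{1,\dots,n\}$ with a random interaction graph in which $i$ and $j$ are adjacent only if $X_j$ lies within a distance controlled by $R_{i,n}+R_{j,n}$ of $X_i$. I would estimate the third moment through the fourth, using Lyapunov's inequality $E|\sum_i\bar Y_i|^3\leq (E(\sum_i\bar Y_i)^4)^{3/4}$, and expand $E(\sum_i\bar Y_i)^4=\sum_{i,j,k,l}E[\bar Y_i\bar Y_j\bar Y_k\bar Y_l]$. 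Centering kills every term in which some index is isolated from the remaining three in the interaction graph, so that the only surviving contributions come from four-tuples splitting into two mutually adjacent pairs or forming a single connected cluster. Since the light tails of $R_{i,n}$ in \eqref{eq:unifBoundR} force the expected degree of the interaction graph to be $O(1)$, there are $O(n)$ adjacent pairs and $O(n^2)$ relevant four-tuples; combined with the factorial moment bounds $E[\#(\X_n\cap C(X_1,R_n))^{p'}]\leq c\,C^{-p'}(p')!$ from the proof of Proposition \ref{prop:bound_size_diagrams}, this yields $E(\sum_i\bar Y_i)^4=O(n^2)$ and hence $E|\sum_i\bar Y_i|^3=O(n^{3/2})$, up to the exponential factor. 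The indicators $\ones\{R_{i,n}\geq M\}$ carried by the scores, fed into \eqref{eq:unifBoundR}, produce the factor $\exp(-c_\varepsilon n^{-1}M^d)$, while the extra $\varepsilon$ in the exponent absorbs the H\"older slack needed to decouple the merely exponentially decaying counts from the geometric events defining adjacency.

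The main obstacle is that the interaction graph is \emph{random} and its degrees are \emph{not uniformly bounded}: the stabilization radii $R_{i,n}$ fluctuate and have only exponential, not compactly supported, tails, so one cannot invoke an off-the-shelf dependency-graph moment inequality with a fixed maximal degree. The real work is to show that the light-tailed control of $R_{i,n}$ from \eqref{eq:unifBoundR} and the moment generating function estimate \eqref{eq:MGF_R} is strong enough to bound simultaneously the expected number of adjacent index-tuples and the joint moments of the counts $\#(\X_n\cap C(X_i,R_{i,n}))^q$ sitting on them. Concretely, for each surviving four-tuple one must split, by Cauchy--Schwarz or H\"older, the geometric event that the cubes overlap from the factorial moments of the counts, and then verify that summing over all $O(n^2)$ clustered tuples preserves the $n^2$ scaling --- this is precisely where the arbitrarily small loss $\varepsilon$ is incurred. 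An equally valid route would replace the explicit moment expansion by a generalized Efron--Stein (resampling) inequality for the binomial process, bounding the effect of replacing a single $X_i$ by an independent copy; the same local-dependence and tail inputs would then control the resulting sum of squared differences.
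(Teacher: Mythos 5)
Your reduction via Markov's inequality to a centered third-moment bound is exactly how the paper closes its argument, but your primary route to that moment bound --- Lyapunov up to the fourth moment plus a dependency-graph expansion --- has a genuine gap that you yourself flag but do not repair. The claim that ``centering kills every term in which some index is isolated from the remaining three in the interaction graph'' is not correct in this setting: whether $i$ is isolated is itself a random event depending on the configuration, so you cannot condition on it while keeping the product structure of $E[\bar Y_i\bar Y_j\bar Y_k\bar Y_l]$, and since the stabilization radii $R_{i,n}$ have only exponential (not compactly supported) tails, $\bar Y_i$ is never exactly independent of the other scores --- every one of the $n^4$ terms is nonzero and must be decoupled \emph{quantitatively}. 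Moreover, for a binomial process the fixed sample size induces global dependence even between scores with disjoint cubes, and the heuristic ``expected degree $O(1)$, hence $O(n^2)$ relevant four-tuples'' silently replaces the random interaction graph by a deterministic one; making it honest requires joint overlap-probability estimates of the type $P(G_{j_1}\cap G_{j_2}\cap G_{j_3})\leq c/n^3$, whose proof (via the conditional radius bound of Lemma \ref{lem:prob_bound} and the chained integrations over the inter-point distances $D_{ij}$) is the actual technical core and is entirely absent from your sketch. Calling this ``the real work'' and deferring it by Cauchy--Schwarz or H\"older is precisely the part of the lemma that cannot be deferred.

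It is worth noting that the route you mention only in passing at the end --- a generalized Efron--Stein resampling inequality --- is the paper's actual proof: it applies the moment inequality of Boucheron et al.\ (Theorem \ref{massart}) with $p=3$, writes the one-point perturbation $Z-Z_n$ as $S(X_n,\X_n)+\sum_{j<n}Y_j$ with $Y_j\neq 0$ only on the event $G_j=\{X_n\in C(X_j,R_{j,n-1})\}$, expands the resulting cube, and applies H\"older with exponent $p'=3/\varepsilon$ to separate the overlap events $G_{j_1}\cap G_{j_2}\cap G_{j_3}$ (bounded by $c n^{-\#\{\text{distinct indices}\}}$) from the moments of the $Y_j$'s (bounded via \eqref{eq:unifBoundR} and Lemma \ref{lem:bound_num_points}, which is also where the indicator $\ones\{R\geq M\}$ yields the exponential factor in $M^d$). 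This resampling formulation is what lets one avoid the exact-independence issue entirely: one never needs the scores to be independent, only that a single resampled point perturbs few scores, and the $n^\varepsilon$ loss arises transparently from the H\"older exponent --- consistent with your intuition about where $\varepsilon$ comes from, but realized by a mechanism your primary argument does not contain. As written, the proposal identifies the right ingredients and the right obstruction but does not prove the lemma.
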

 As a consequence of the concentration inequality, $n^{-1}Z_n^0$ is almost surely bounded. Indeed, choose $\varepsilon < 1/2$:
\begin{align*}
P(n^{-1}|Z_n^0-E[Z_n^0]|>t) \leq \frac{n^{\frac{3}{2}+\varepsilon}}{(nt)^3} = \frac{n^{-\frac{3}{2}+\varepsilon}}{t^3}.
\end{align*}
By Borel-Cantelli lemma, almost surely, for $n$ large enough, we have  $n^{-1}|Z_n^0-E[Z_n^0]|\leq 1$. Moreover, $\sup_n n^{-1}E[Z_n^0]$ is finite. As a consequence, $\sup_n n^{-1}Z_n^0$ is almost surely finite. As this is an upper bound of $\sup_n |\mu_n|$, we have proven that $\sup_n |\mu_n| < \infty$ almost surely. By Lemma \ref{lem:measureLemma}, the sequence $\mu_n$ converges $C_0$-vaguely to $\mu$. The proof of Lemma \ref{lem:concentration_bound} is based on an inequality of the Efron-Stein type and is rather long and technical. It can be found in Section \ref{sec:technical_lemma}.

We now briefly consider the Poisson setting. Define $\mu_n' = \mu_{N_n} \times (N_n/n)$, where $(N_i)_{i\geq 1}$ is some sequence of independent Poisson variables of parameter $n$, independent of $(X_i)_{i \geq 1}$. \\[5pt]
%
$\bullet$\;\; $E[|\mu'_n|] = E\left[\frac{N_n}{n}E[|\mu_{N_n}||N_n]\right] \leq \sup_n E[|\mu_n|] <\infty$. Therefore, $C_0$-convergence of the expected diagram holds in the Poisson setting.\\[3pt]
$\bullet$\;\; Likewise, it is sufficient to show that $\sup_n \frac{N_n}{n} <\infty$ to conclude to the $C_0$-convergence of the diagram in the Poisson setting. Fix $t>1$. It is shown in the chapter 1 of the monograph \cite{penrose2003random} that $P(N_n>nt) \leq \exp(-nH(t))$, where $H(t) = 1-t+t\ln t$. This gives us
\begin{align}
P\left(\sup_n \frac{N_n}{n} \leq t\right) &= \prod_{n\geq 0} (1-P(N_n>nt)) \nonumber \\
&\geq \prod_{n\geq 0}(1-\exp(-nH(t))) = \exp\left(\sum_{n\geq 0} \ln(1-\exp(-nH(t)))\right)  \label{tedious}
\end{align}
The series $\sum_n \ln(1-x^n)$ is equal to $-\sum_n \sigma(n)/n x^n$ when $|x|<1$, and where $\sigma(n)$ is the sum of the proper divisors of $n$. Therefore it is a power series, and is continuous on $]-1,1[$. Since $t$ tends to infinity, $\exp(-H(t))$ converges to $0$, and thus the quantity appearing in the right hand side of \eqref{tedious} converges to $1$ as $t$ tends to infinity.

\subsubsection{Step 2: Convergence for functions with polynomial growth}
The second step consists in extending the convergence to functions $\phi \in C_{\mathrm{poly}}(\upperdiag)$. We only show the result for binomial processes. The proof can be adapted to the Poisson case using similar techniques as at the end of Step 1. The core of the proof is a bound on the number of points in a diagram with high persistence. For $M>0$, define $T_M = \{\rD=(r_1,r_2)\in \upperdiag \mbox{ s.t. } \pers(\rD) \geq M\}$. Let $P_n(M) = n\mu_n(T_M)$ denote the number of points in the diagram with persistence larger than $M$.

 First, we show that the expectation of $P_n(M)$ converges to $0$ at an exponential rate when $M$ tends to $\infty$. The random variable $P_n(M)$ is bounded by $n\mu_n(U_M)$. By Proposition \ref{prop:bound_size_diagrams}, recalling that $q$ is the degree of homology,
\begin{align}
E\big[P_n(M)\big] &\leq \int_0^\infty P\big(n\mu_n(U_M) \geq t \big)dt  \nonumber\\ 
&\leq \int_0^\infty c\exp\big(-c(M^d+(t/n)^{1/(q+1)}\big)dt \nonumber\\
&\leq cn\exp(-cM^d) \int_0^\infty \exp(-u) qu^qdu =cn\exp\big(-cM^d\big) \label{eq:bound_PnM}
\end{align}

Fix a sequence $(g_M)$ of continuous functions with support inside the complement of $T_M$ taking their values in $[0,1]$, equal to $1$ on $T_{M-1}^c$. Let $\phi$ be a function with polynomial growth, i.e.\ satisfying \eqref{polyGrowth} for some $A,\alpha>0$. Define $\phi_M = \phi \cdot g_M$. We have the decomposition:
\begin{equation}
    E[\mu_n(\phi)] = (E[\mu_n(\phi)]-E[\mu_n(\phi_M)]) + E[\mu_n(\phi_M)]. 
\end{equation}
As $\phi_M \in C_0(\upperdiag)$, the second term on the right converges to $\mu(\phi_M)$. The first term on the right is bounded by 
%
\begin{align}
E[\mu_n(\phi)]-E[\mu_n(\phi_M)] &\leq E[\mu_n(A(1+\pers^\alpha)(1-g_M))] \nonumber \\
&\leq AE[P_n(M)]/n + AE[\Pers_\alpha(M;\dgmmap[\X_n])]/n \nonumber \\
&\leq cA \exp\left(-cM^d \right) + AE[\Pers_\alpha(M;\dgmmap[\X_n])]/n, \label{ineqTHM3}
\end{align}
using inequality \eqref{eq:bound_PnM}. It is shown in \cite{cohen2010lipschitz} that
\[ \Pers_\alpha(M;\dgmmap[\X_n]) \leq M^\alpha P_n(M) + \alpha \int_{M}^\infty P_n(\varepsilon)\varepsilon^{\alpha-1}d\varepsilon.\]
Hence, by Fubini's theorem and inequality \eqref{eq:bound_PnM}:
\begin{equation}\label{ineqTHM3.2}
E[\Pers_\alpha(M;\dgmmap[\X_n])]/n \leq c M^\alpha \exp\left(-cM^d \right) + c \alpha \int_{M}^\infty \exp\left(-c \varepsilon^d \right)\varepsilon^{\alpha-1}d\varepsilon.
\end{equation} 
and this quantity goes to $0$ as $M$ goes to infinity.  Moreover, applying this inequality to $M=0$, we get that $C_0=\sup_n E[\mu_n(\phi)] < \infty$. Therefore, $\lim_{n\to \infty} E[\mu_n(\phi_M)] = \mu(\phi_M) \leq C_0$. By the monotone convergence theorem, $\mu(\phi_M)$ converges to $\mu(\phi)$ when $\phi$ is non negative, with $\mu(\phi)$ finite by the latter inequality. If $\phi$ is not always non negative, we conclude by separating its positive and negative parts. Finally, looking at the bounds \eqref{ineqTHM3} and \eqref{ineqTHM3.2},
\begin{align*}
\limsup_{n\to \infty} |E[\mu_n(\phi)]-\mu(\phi)| &\leq \limsup_{n\to\infty} (E[\mu_n(\phi)]-E[\mu_n(\phi_M)]) + |\mu(\phi_M)-\mu(\phi)| \to_{M\to \infty} 0.
\end{align*}

We now prove that $\mu_n(\phi) - E[\mu_n(\phi)]$ converges a.s. to $0$. Similar to the above, it is enough to show that $P_n(M)$ is almost surely bounded by a quantity independent of $n$, which converges to $0$ at an exponential rate when $M$ goes to $\infty$. The random variable $(q+2)P_n(M)$ is bounded by $Z_n^M$, which is defined in Lemma \ref{lem:concentration_bound}, and whose expectation is controlled. Therefore, it remains to show that $Z_n^M$ is close to its expectation. We have
 \begin{align}
 &\limsup_{n\to \infty} |\mu_n(\phi) - E[\mu_n(\phi)]| \leq \limsup_{n\to \infty}\left(|\mu_n(\phi-\phi_M)|+ |(\mu_n-E[\mu_n])(\phi_M)| + |E[\mu_n](\phi-\phi_M)| \right) \nonumber \\
 &\leq \limsup_{n\to \infty} |\mu_n(\phi-\phi_M)| +0+cA \exp\left(-cM^d \right)+ c M^\alpha \exp\left(-cM^d \right) + c \alpha \int_{M}^\infty \exp\left(-c \varepsilon^d \right)\varepsilon^{\alpha-1}d\varepsilon \label{eq:boundas}
 \end{align}
 by inequalities \eqref{ineqTHM3} and \eqref{ineqTHM3.2}. The random variable $|\mu_n(\phi-\phi_M)|$ is bounded by 
\begin{align}
An^{-1}&\left(P_n(M) + M^\alpha P_n(M)+ \alpha \int_{M}^\infty P_n(\varepsilon)\varepsilon^{\alpha -1}d\varepsilon.\right) \nonumber \\
&\leq An^{-1} \left(Z_n^{M_n} + M^\alpha Z_n^{M_n}+ \alpha \int_{M}^\infty Z_n^{\varepsilon_n} \varepsilon^{\alpha-1}d\varepsilon,\right) \label{ineqTHM3.3}
\end{align} 
where $M_n=n^{-1/d}M$ and $\varepsilon_n = n^{-1/d}\varepsilon$. As a consequence of Lemma \ref{lem:concentration_bound}, by choosing $\varepsilon$ so that $-3/2+\varepsilon <-1$, 
\[P\left(\sup_n n^{-1}|Z_n^{M_n}-E[Z_n^{M_n}]|>t \right) \leq c\frac{\exp(-cM^d)}{t^3}.\]
Fixing $t=\exp(-(c/6)M^d)$ and using Borel-Cantelli lemma, for $M \in \mathbb{N}$ large enough, $\sup_n n^{-1}|Z_n^{M_n}-E[Z_n^{M_n}]| < \exp(-(c/6)M^d)$.
Also, $E[Z_n^{M_n}] \leq  n c \exp(-cM^d)$. Therefore, for $\alpha\geq 0$, \[\lim_{M\to \infty} \limsup_{n \to \infty} n^{-1}M^\alpha Z_n^{M_n} = 0 \mbox{ a.s.}\] The third term in the sum \eqref{ineqTHM3.3} is less straightforward to treat. As $Z_n^M$ is a decreasing function of $M$, for $M\in \mathbb{N}$ large enough and with $k_n = n^{-1/d}k$:
\begin{align*}
\int_{M}^\infty n^{-1}Z_n^{\varepsilon_n}\varepsilon^{\alpha-1}d\varepsilon &\leq \sum_{k \geq M}n^{-1}Z_n^{k_n} \int_{k}^{k+1} \varepsilon^{\alpha-1}d\varepsilon \\
&\leq \sum_{k \geq M}c\exp(-ck^d) \frac{1}{\alpha} \left((k+1)^\alpha-k^\alpha\right) \\
&\leq \sum_{k \geq M}c\exp(-c k^d) \frac{k^\alpha 2^\alpha}{\alpha} = o(1).
\end{align*}
As a consequence, $\lim_{M\to \infty} \limsup_n |\mu_n(\phi-\phi_M)| = 0$. As the three last terms appearing in inequality \eqref{eq:boundas} also converges to $0$ when $M$ goes to infinity, we have proven that $\mu_n(\phi)-E[\mu_n(\phi)]$ converges a.s. to $0$. Therefore, $\mu_n(\phi)$ converges a.s. to $\mu(\phi)$.

Finally, we have to prove assertion (ii) in Theorem \ref{thm:main_cv}, i.e.\ that the convergence holds in $L_p$. As the convergence holds in probability, it is sufficient to show that $(\mu_n(\phi)^p)_n$ is uniformly integrable. Observing that $E[\mu_n(\phi)^p \ones\{\mu_n(\phi)^p >M\}]  \leq \left( E[\mu_n(\phi)^{2p}] P(\mu_n(\phi)^p>M)\right)^{1/2}$, uniform integrability follows from $\sup_n E[\mu_n(\phi)^{p}] <\infty$ for any $p>1$. We have
\begin{align*}
 \mu_n(\phi)^p &\leq \mu_n(A(1+\pers^\alpha))^p \leq 2^{p-1}(A^p|\mu_n|^p + \mu_n(\pers^\alpha)^p),
\end{align*}
and from Proposition \ref{prop:bound_size_diagrams} we easily obtain that $E[|\mu_n|^p]$ is uniformly bounded. We treat the other part by assuming without loss of generality that $p$ is an integer:
\begin{align*}
E[\mu_n(\pers^\alpha)^p] &\leq \frac{1}{n^p}E\left[ \left(\int_0^\infty P_n(\varepsilon) \varepsilon^{\alpha-1} d\varepsilon\right)^p \right]\\
 &= \frac{1}{n^p}E\left[ \int_{[0,\infty[^p}P_n(\varepsilon_1) \cdots P_n(\varepsilon_p) (\varepsilon_1\cdots \varepsilon_p)^{\alpha-1} d\varepsilon_1\cdots \varepsilon_p\right]\\
 &= \frac{1}{n^p} \int_{[0,\infty[^p}E[P_n(\varepsilon_1) \cdots P_n(\varepsilon_p) ](\varepsilon_1\cdots \varepsilon_p)^{\alpha-1} d\varepsilon_1\cdots \varepsilon_p \\
 &\leq \frac{1}{n^p} \int_{[0,\infty[^p}\left(E[P_n(\varepsilon_1)^p] \cdots E[P_n(\varepsilon_p) ^p]\right)^{1/p}(\varepsilon_1\cdots \varepsilon_p)^{\alpha-1} d\varepsilon_1\cdots \varepsilon_p\\
 &\leq \frac{1}{n^p} \int_{[0,\infty[^p}\left(n^{p^2} c\exp(-c(\varepsilon_1^d+\cdots + \varepsilon_p^d))\right)^{1/p}(\varepsilon_1\cdots \varepsilon_p)^{\alpha-1} d\varepsilon_1\cdots \varepsilon_p\\
  &= c \int_{[0,\infty[^p}\exp\left(-\frac{c}{p}(\varepsilon_1^d+\cdots + \varepsilon_p^d)\right)(\varepsilon_1\cdots \varepsilon_p)^{\alpha-1} d\varepsilon_1\cdots \varepsilon_p\\
  &= c \left( \int_0^\infty \exp\left(-\frac{c}{p}\varepsilon^d\right) \varepsilon^{\alpha-1}d\varepsilon \right)^p <\infty.
\end{align*}

\subsection{Proof of Proposition \ref{prop:density}}

The proof relies on the regularity of the number of simplexes appearing at certain scales.
\begin{lemma}\label{lem:LLNN_k} Let $q\geq 0$. For $r_1<r_2$, let $F_q(\X_n,r_1,r_2)$ be the number of $q$-simplexes $\sigma$ in the filtration $\KK(\X_n)$ with $r(\sigma)\in [r_1,r_2]$. Assume that $\X_n$ is a binomial $n$-sample of density $\kappa$. Then,
\begin{equation}\label{appliLLN}
 n^{-1}F_q(n^{1/d}\X_n,r_1,r_2) \xrightarrow[n\to \infty]{L_2} F_q(r_1,r_2),
\end{equation}
where $F_q(r_1,r_2)\leq cr_2^{2dq-1} |r_2-r_1|$.
\end{lemma}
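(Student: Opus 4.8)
The plan is to treat $F_q(n^{1/d}\X_n,r_1,r_2)=\sum_S h_S$ as a local $U$-statistic of order $q+1$ and establish $L_2$ convergence by the classical first-and-second-moment method, while reading off the bound on the limit from the scale-homogeneity of the filtration time. Here $S$ runs over the $(q+1)$-subsets of $\{1,\dots,n\}$ and $h_S=\ones\{r(n^{1/d}\X_S)\in[r_1,r_2]\}$ with $\X_S=\{X_i:i\in S\}$. First I would compute the expectation: since the $X_i$ are i.i.d., $E[F_q]=\binom{n}{q+1}E[h_{\{0,\dots,q\}}]$, and fixing the first vertex, passing to difference coordinates $z_i=n^{1/d}(X_i-X_0)$ and rescaling $x=X_0$ gives, writing $\tilde h(\vec z)=\ones\{r(\{0,z_1,\dots,z_q\})\in[r_1,r_2]\}$, that $n^{-1}E[F_q]$ converges as $n\to\infty$ to
\[
F_q(r_1,r_2)=\frac{1}{(q+1)!}\Big(\int_{[0,1]^d}\kappa(x)^{q+1}\,dx\Big)\,I(r_1,r_2),\qquad I(r_1,r_2):=\int_{(\R^d)^q}\tilde h(\vec z)\,d\vec z.
\]
The only subtlety is that $\kappa$ is merely bounded, so I cannot use pointwise convergence $\kappa(x+n^{-1/d}z_i)\to\kappa(x)$; instead I would telescope $\prod_i\kappa(x+n^{-1/d}z_i)-\kappa(x)^q$ and invoke continuity of translation in $L^1$, $\|\kappa(\cdot+t)-\kappa\|_{L^1}\to0$, uniformly over the bounded support of $\tilde h$.

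Next I would show $\operatorname{Var}(n^{-1}F_q)\to0$. The decisive structural fact is that for a binomial process $h_S$ and $h_{S'}$ are \emph{independent} whenever $S\cap S'=\emptyset$, so only overlapping pairs contribute and, crucially, the would-be leading $O(n^2)$ term vanishes identically. Grouping pairs by $j=|S\cap S'|\ge1$, the number of such pairs is $O(n^{2q+2-j})$, while a scaling estimate analogous to the first-moment computation — both simplices small and sharing $j$ vertices confines $2q+2-j$ points to a bounded region — gives $E[h_Sh_{S'}]=O(n^{-(2q+1-j)})$ and $E[h_S]E[h_{S'}]=O(n^{-2q})$. Each group then contributes $O(n)$ to $\operatorname{Var}(F_q)$, whence $\operatorname{Var}(F_q)=O(n)$ and $\operatorname{Var}(n^{-1}F_q)=O(1/n)$; combined with the first moment this yields the claimed $L_2$ convergence. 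I expect this variance bookkeeping to be the main obstacle: it is routine but requires careful accounting of the overlap structure and of the edge effects near $\partial[0,1]^d$, which are of lower order.

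For the bound on $F_q(r_1,r_2)$ I would exploit that the filtration time is positively homogeneous, $r(\lambda\sigma)=\lambda\,r(\sigma)$ for $\lambda>0$, which holds for both the Rips filtration ($r=\diam$) and the \v Cech filtration ($r=$ circumradius of the smallest enclosing ball). Hence $\{\vec z:r(\{0,\vec z\})\le\rho\}=\rho\cdot\{\vec z:r(\{0,\vec z\})\le1\}$, so $\operatorname{Vol}\{r\le\rho\}=V_1\rho^{dq}$ with $V_1:=\operatorname{Vol}\{r\le1\}<\infty$ (the constraint $r\le1$ confines the $q$ free vertices to a fixed bounded set). Differentiating and applying the mean value theorem,
\[
I(r_1,r_2)=V_1\,(r_2^{dq}-r_1^{dq})\le dq\,V_1\,r_2^{dq-1}(r_2-r_1),
\]
and combining with $\int_{[0,1]^d}\kappa^{q+1}\le(\sup\kappa)^{q}$ gives $F_q(r_1,r_2)\le c\,r_2^{dq-1}|r_2-r_1|$. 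This is of the asserted form, namely a finite power of $r_2$ times the crucial linear factor $|r_2-r_1|$ (the homogeneity argument is precisely what makes this linear dependence transparent and uniform over both filtrations); the linear factor is all that the application in Proposition \ref{prop:density} requires.
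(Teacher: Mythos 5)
Your argument is correct, and it takes a genuinely different route from the paper's. The paper writes $F_q(\X_n,r_1,r_2)=\sum_{x\in\X_n}\xi^{r_1,r_2}(x,\X_n)$ for the local functional $\xi^{r_1,r_2}(x,\X)=\frac{1}{q+1}\sum_{\sigma\in\KK_q(\X)}\ones\{r(\sigma)\in[r_1,r_2],\ x\in\sigma\}$ and invokes the general law of large numbers for stabilizing functionals of \cite{penroseLLN}: stabilization holds with radius $2r_2$, the moment condition follows by dominating $\xi^{r_1,r_2}(n^{1/d}X_1,n^{1/d}\X_n)$ by $\#(\X_n\cap B(X_1,2n^{-1/d}r_2))^{q}$, and the limit is identified abstractly as $E[\xi^{r_1,r_2}_\infty(\P)]$ for a homogeneous Poisson process $\P$. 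The linear factor $|r_2-r_1|$ is then obtained by conditioning on $N=\#(\P\cap B(0,2r_2))$ and citing Theorem 7 of \cite{chazal_et_al:LIPIcs:2018:8739} for the existence of a density for the filtration time $r(\{0,X_1,\dots,X_q\})$. You bypass both external inputs: the $U$-statistic first/second-moment computation replaces Penrose's theorem (your bookkeeping is sound — under binomial sampling $h_S$ depends on $\X_S$ alone since the Rips and \v Cech filtration times of a simplex are intrinsic to its vertex set, so disjoint pairs are exactly independent, and each overlap class $j\geq 1$ contributes $O(n^{2q+2-j})\cdot O(n^{-(2q+1-j)})=O(n)$ to the variance), and the exact homogeneity identity $\operatorname{Vol}\{\vec z: r(\{0,\vec z\})\leq\rho\}=V_1\rho^{dq}$ plus the mean value theorem replaces the density result of \cite{chazal_et_al:LIPIcs:2018:8739}. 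What each buys: the paper's route avoids all variance accounting and generalizes to functionals without your explicit combinatorial structure; your route is elementary and self-contained, and it identifies the limit constant explicitly as $\frac{1}{(q+1)!}\bigl(\int\kappa^{q+1}\bigr)V_1(r_2^{dq}-r_1^{dq})$ rather than as an abstract Poisson expectation. Your treatment of the merely bounded density via $L^1$-continuity of translation is also adequate under the standing assumption \eqref{eq:kappa}.

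One discrepancy deserves mention: you prove $F_q(r_1,r_2)\leq c\,r_2^{dq-1}|r_2-r_1|$, whereas the statement asserts the exponent $2dq-1$; for $r_2<1$ your bound is formally weaker. In fact your exponent is the correct one. In the paper's own computation, $E[N!/(N-q)!]=c\,r_2^{dq}$, but the displayed integral for $P(r(\{0,X_1,\dots,X_q\})\in[r_1,r_2])$ omits the normalization $\operatorname{Vol}(B(0,2r_2))^{-q}=c\,r_2^{-dq}$ of the uniform density on $B(0,2r_2)$; restoring it gives $E[\xi_\infty^{r_1,r_2}]\leq c\,r_2^{dq-1}(r_2-r_1)$, matching yours. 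Your exact-limit formula confirms this (for $q=1$ and the Rips filtration, the limit is proportional to $r_2^{d}-r_1^{d}\asymp r_2^{d-1}(r_2-r_1)$, so the stated $r_2^{2d-1}$ order cannot hold as $r_2\to 0$ with, say, $r_1=r_2/2$). As you note, the application in Proposition \ref{prop:density} uses the bound only for fixed $r_1>0$ with $r_2\to r_1$, so only the linear factor $|r_2-r_1|$ is relevant and the discrepancy is immaterial there.
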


\begin{proof} For a finite set $\X \subset \R^d$, define
\[\xi^{r_1,r_2}(x,\X)= \frac{1}{q+1} \sum_{\sigma \in \KK_q(\X)} \ones\{r(\sigma)\in [r_1,r_2] \mbox{ and } x\in \sigma\}.\]
Then, $F_q(\X_n,r_1,r_2) = \sum_{x\in \X} \xi^{r_1,r_2}(x,\X_n)$. The paper \cite{penroseLLN} shows convergence in $L_2$ of such functionals $\xi(x,\X)$ under two conditions. The first one of them is called stabilization. Let $\P$ be a homogeneous Poisson process in $\R^d$. A quantity $\xi(x,\X)$ is stabilizing if, with probability one, there exists some random radius $R<\infty$ such that, for all finite sets $A$ which are equal to $\P$ on $B(0,R)$,
\[ \xi(0,\P\cap B(0,R)) = \xi(0,A),\]
Denote this quantity by $\xi_\infty(\P)$. In our case, $\xi^{r_1,r_2}$ is stabilizing with $R=2r_2$. The second condition is a moment condition: there exists some number $\beta>2$ such that
\[ \sup_n E[\xi(n^{1/d}X_1,n^{1/d}\X_n)^\beta]<\infty .\]
Once again, $\xi^{r_1,r_2}$ possesses this property: the random variable $\xi^{r_1,r_2}(n^{1/d}X_1,n^{1/d}\X_n)$ is bounded by the number of $q$-simplexes of $\KK(\X_n)$ containing $X_1$ and being included in $B(X_1,2n^{-1/d}r_2)$. This number of $q$-simplexes is bounded by $\#(\X_n \cap B(X_1,2n^{-1/d}r_2))^{q}$, which, in turn, is stochastically dominated by a binomial random variable with parameters $n$ and $cn^{-1}r_2^d$. In particular, its moment of order $3q$ is smaller than a constant independent of $n.$ This means that the moment condition is satisfied. Applying the main theorem of \cite{penroseLLN}, convergence \eqref{appliLLN} is obtained, with $F_q(r_1,r_2)=E[\xi_\infty^{r_1,r_2}(\P)]$, where $\xi^{r_1,r_2}_{\infty}(\P) = \xi^{r_1,r_2}(0,\P\cap B(0,2r_2))$. The set $\P\cap B(0,2r_2)$ can be expressed as $\{X_1,\dots,X_N\},$ where $(X_i)_{i\geq 0}$ is a sequence of i.i.d.\ uniform random variables on $B(0,2r_2)$, and $N$ is an independent Poisson variable with parameter $c r_2^d$. Therefore,
\begin{align*}
E[\xi^{r_1,r_2}_{\infty}(\P)] &= E\left[ \sum_{i_1,\dots,i_q}  \ones\{ r(\{0,X_{i_1},\dots,X_{i_q}\}) \in [r_1,r_2]\} \right] \\
&= E\left[\frac{N!}{(N-q)!}\right] P(r(\{0,X_1,\dots,X_q\}) \in [r_1,r_2])\\
&\leq c r_2^{2dq-1}|r_2-r_1|.
\end{align*} 
The last inequality is a consequence of (i) the fact that the $q$-th factorial moment of $N$ equals $cr_2^{dq},$ and (ii) of the following lemma.
  \end{proof}

\begin{lemma}\label{lem:density_cech_bounded}
If $X_1,\dots,X_q$ is a $q$-sample of the uniform distribution on $B(0,2)\subset \R^d$, and $r$ is either the filtration time of the \v Cech or Rips filtration, then, for any $0<a<b\leq 2$, 
\begin{equation}
P(r(0,X_1,\dots,X_q)\in [a,b]) \leq C_{q,d}|a-b|,
\end{equation}
for some constant depending on $d$ and $q$.
\end{lemma}

\begin{proof}
Having such an inequality is equivalent to having the filtration time $r(0,X_1,\dots,X_q)$ having a bounded density on $[0,2]$. We treat separately the case of the Rips and of the \v Cech filtration.

\paragraph*{\textbf{Rips filtration}} 
\ The quantity $r(0,X_1,\dots,X_q)$ is equal to $|X_i|$ or $|X_i-X_j|$ for some indexes $i,j$. Hence,
one has $P(r(0,X_1,\dots,X_q)\in [a,b])\leq q P(|X_1|\in [a,b]) + \frac{q(q-1)}{2} P(|X_1-X_2|\in [a,b])$. The random variables $|X_1|$ and $|X_1-X_2|$  have bounded densities on $[0,2]$, so that the result follows.

\paragraph*{\textbf{\v Cech filtration}}\  Let $r'(x_1,\dots,x_k)$ be the radius of the circumsphere of $x_1,\dots,x_k$. Then, $r(x_1,\dots,x_q)=r'(x_{i_1},\dots,x_{i_k})$ for a certain subset of $\{1,\dots,q\}$. Hence,
\begin{equation}
P(r(0,X_1,\dots,X_q)\in [a,b]) \leq \hspace{-.2cm}\sum_{\sigma\subset\{1,\dots,q\}}( P(r'(0,X_\sigma)\in [a,b]) +P(r'(X_\sigma)\in [a,b]) ).
\end{equation}
We are going to show that $r'(X_\sigma)$ has a bounded density on $[0,2]$ by induction on $k$, and it is then shown likewise that $r'(0,X_\sigma)$ has a bounded density. For $k=2$, $r'(X_\sigma)$ is the distance between $X_1$ and $X_2$, which  has a bounded density. If $k>2$, we let $r_k$ be the circumradius of $\{X_1,\dots,X_k \}$, $r_{k-1}$ the circumradius of $\{X_2,\dots,X_k\}$, with associated circumcenters $z_k$, $z_{k-1},$ respectively, and $U$ 
be  the affine $(k-2)$-dimensional space spanned by  $\{X_2,\dots,X_k\}$.  The vector  $z_k-z_{k-1}$ is orthogonal to $U$ and therefore $r_k^2=|z_k-z_{k-1}|^2+r_{k-1}^2$. 
For any subspace $E$, we let $\pi_E$ be the orthogonal projection onto $E$, $E^\bot$ be the orthogonal space from $E$ and $S_E$ be the unit sphere in $E$. Without loss of generality we assume that $z_{k-1}=0$, so that $U$ is a subspace of $\R^d$. For any $\theta \in S_{U^\bot}$, we let $V(\theta) = U+\R \theta$. 
Let $\theta_0$ be any vector in $S_{U^\bot}$, with $V_0 = V(\theta_0)$ and introduce the function $\Phi : \R^+ \times S_{U^\bot}\times S_{V_0} \to \R^d$ defined by
\[ \Phi(t,\theta,x) = t\theta + \sqrt{t^2+r_{k-1}^2}R(\theta)x, \]
 where $R(\theta)$ is an isometry from $V_0$ to $V(\theta)$ defined by $R(\theta)x = \pi_{U}(x) + (x \cdot \theta_0)\theta$ for $x\in V_0$. Notice that, for each $\theta \in S_{U^\top}$, we have $\Phi(t,\theta,x) \in V(\theta).$ See also Figure \ref{fig:construction}.
\begin{figure}
\centering
\includegraphics[scale=.6]{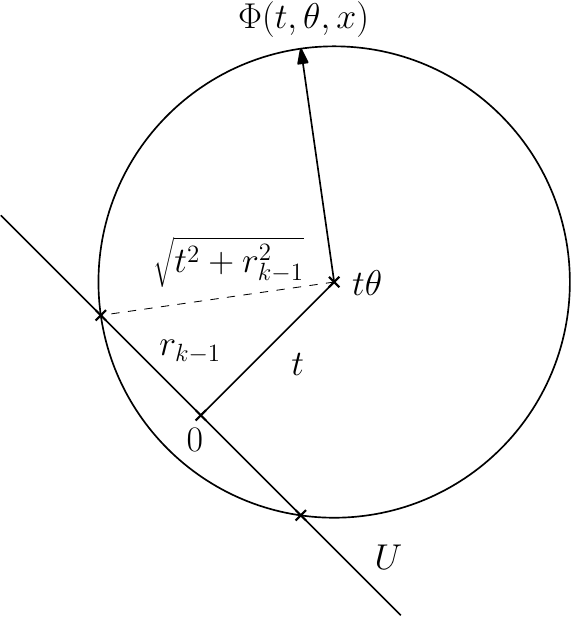}
\caption{Definition of $\Phi$. We display the plane $V(\theta)$.}\label{fig:construction}
\end{figure}

{\bf Fact.} {\em
The function $\Phi$ is injective and we have $\{r_k\in [a,b]\} \subset \{ X_1\in \Phi(A_{r_{k-1}})\}$, where $A_{r_{k-1}}= M_{a,b}\times S_{U^\bot}\times S_{V_0}$ and $M_{a,b} = \{t\geq 0,\ a^2\leq t^2+r_{k-1}^2\leq b^2\}$.}\\

The proof of this fact is given below. We continue the proof of the lemma by assuming that the fact holds.  
%
Letting $\lambda_d$ denote the Lebesgue measure on $\R^d$, and $c_d^{-1}$ the $d$-dimensional volume of $B(0,2),$ we have
\begin{align}
&P(r'(X_\sigma)\in [a,b]) = E[P(r_k \in [a,b]|r_{k-1})] \le E\left[ P(X_1\in \Phi(A_{r_{k-1}})|r_{k-1}) \right]\nonumber \\
&\leq c_dE\left[ \lambda_d(\Phi(A_{r_{k-1}})  )  \right] = c_dE\left[ \int_{A_{r_{k-1}}} J\Phi(t,\theta_1,\theta_2)\dd t \dd \theta_1\dd\theta_2 \right]  \label{eq:need_to_bound_jacob}
\end{align}

Let us compute the Jacobian $J\Phi(y)$ of $\Phi$ at some point $y=(t,\theta,x)\in A_{r_{k-1}}$. The tangent space of $S_{U^\bot}$ at $\theta$ is equal to $U^\bot \cap (\R\theta)^\bot=V(\theta)^\bot$ and the tangent space of $S_{V_0}$ at $x$ is equal to $V_0 \cap (\R x)^\bot$. We compute the partial derivatives:
\begin{align*}
&\partial_t \Phi(y)[h_0] = \theta h_0 + \frac{th_0}{ \sqrt{t^2+r_{k-1}^2}}R(\theta)x, &&h_0\in \R\\
&\partial_{\theta} \Phi(y)[h_1]= h_1t+ \sqrt{t^2+r_{k-1}^2}(x \cdot \theta_0)h_1 &&h_1\in V(\theta)^\bot \\
&\partial_{x}\Phi(y)[h_2] = \sqrt{t^2+r_{k-1}^2}R(\theta)h_2 && h_2\in  V_0 \cap (\R x)^\bot.
\end{align*}
We decompose the space $\R^d$ as follows. Let $g =R(\theta)x \in V(\theta)$, $G=(\R g)^\bot \cap V(\theta)$, and $H=V(\theta)^\bot$. Then, $\pi_{\R g} + \pi_G + \pi_H= \id$ (recall that $\pi_E$ denotes the orthogonal projection onto $E$). Also, note that, as $h_2\in  V_0 \cap (\R x)^\bot$ is orthogonal to $x,$ and $R(\theta)$ is an isometry, the vector $R(\theta)h_2$ is orthogonal to $g=R(\theta)x$. We have with $s_{k-1} = \sqrt{t^2 + r_{k-1}^2}$ that
\begin{alignat*}{3}
    &\pi_{\R g}\partial_t\Phi(y)[h_0] =\left(( \theta \cdot g)h_0 +  \frac{th_0}{ s_{k-1}} \right)g
; \  &&\pi_{\R g}\partial_{\theta} \Phi(y)[h_1]= 0 
; \  &&\pi_{\R g}\partial_{x}\Phi(y)[h_2] = 0 \\
  &\pi_{G}\partial_t\Phi(y)[h_0] = \pi_G\theta h_0; \  && \pi_{G}\partial_{\theta} \Phi(y)[h_1]= 0 
 ; \  && \pi_{G}\partial_{x}\Phi(y)[h_2] = s_{k-1}\pi_G R(\theta)h_2 \\
 &\pi_{H}\partial_t\Phi(y)[h_0] = 0;  \  && \pi_{H}\partial_{\theta} \Phi(y)[h_1]=  h_1t+ s_{k-1}(x \cdot \theta_0)h_1;  \ &&\pi_{H}\partial_{x}\Phi(y)[h_2] = 0 .
\end{alignat*}
Hence, remarking that $\pi_GR(\theta)$ is an isometry from $V_0\cap (\R x)^\bot$ to $G=V(\theta)\cap(\R g)^\bot$,
\begin{align*}
J \Phi(y) &=  \left|\theta\cdot g  + \frac{t}{ s_{k-1}} \right| \times \left|\det\p{ s_{k-1}\pi_G R(\theta)_{|V_0\cap(\R x)^\bot}}\right|\times \left|\det\p{\p{t + s_{k-1} x \cdot \theta_0 }id_{H}}\right|\\
&\leq 2 \times  s_{k-1}^{k-2} \times \p{t + s_{k-1}x \cdot \theta_0}^{d-k+1} \\
&\leq 2^{d-k+2}s_{k-1}^{d-1}.
\end{align*}
 Therefore, letting $t_0= \sqrt{a^2-r_{k-1}^2}$ and $t_1= \sqrt{b^2-r_{k-1}^2}$, we may bound \eqref{eq:need_to_bound_jacob} as follows
\begin{align*}
P(r'(X_\sigma)\in [a,b]) &\leq c_dE\left[ \int_{A_{r_{k-1}}} 2^{d-k+2}\p{t^2+r_{k-1}^2}^{(d-1)/2}\dd t \dd \theta_1\dd\theta_2 \right] \\
& \leq c_{d,k} E\left[\int_{t_0}^{t_1}(t^2 + r_{k-1}^2)^{(d-1)/2}\dd t \right]
\end{align*}
Introduce $F:x \mapsto \int_0^{\sqrt{x^2-r_{k-1}^2}}(t^2 + r_{k-1}^2)^{(d-1)/2}\dd t$. Then, \[F'(x) =x^{d-1} \times \frac{x}{\sqrt{x^2-r_{k-1}^2}}= \frac{x^{d}}{\sqrt{x^2-r_{k-1}^2}} \]
and, letting $f_{k-1}$ be the density of $r_{k-1}$ on $[0,2]$,
\begin{align*}
P(r'(X_\sigma)\in [a,b])&\leq  c_{d,k} E\left[\int_{a}^{b}F'(x)\dd x \right] \\
&\leq c_{d,k}  \int_{a}^{b}\int_0^b \ones\{x\geq r_{k-1}\}f_{k-1}(r_{k-1})\frac{x^{d}}{\sqrt{x^2-r_{k-1}^2}}\dd x \dd r_{k-1}\\
&\leq c'_{d,k}  \int_{a}^{b}x^{d}\int_0^x \frac{1}{\sqrt{x^2-r_{k-1}^2}} \dd r_{k-1}\dd x \\
&\leq c'_{d,k,}  \int_{a}^{b}x^{d}\int_0^1 \frac{1}{\sqrt{1-u^2}}\dd u \dd x \leq c''_{d,k} |b-a|,
\end{align*}
where at the last line we used that the function $x\mapsto x^{d}$ is bounded on $[0,2]$. Hence, $r'(X_\sigma)$ has a bounded density on $[0,2]$, and the induction step is proven. It remains to verify the Fact. 

{\em Proof of Fact.}
We first prove the injectivity. Let $v=\Phi(t,\theta,x)$ for some $y=(t,\theta,x)\in  \R^+ \times S_{U^\bot}\times S_{V_0}$. Then, $\pi_{U^\bot}(y)$ is colinear with $\theta$, so that $\theta$ is determined up to a sign by $y$. Let $S_1$ be the sphere in $U$, centered at $0$, of radius $r_{k-1}$, and let $S_2$ be the unique sphere in $V(\theta)$ containing both $y$ and $S_1$. Then $y\in S_2$, while the center of $S_2$ is $t\theta$, so that $t$ and $\theta$ are uniquely determined by $y$.
It follows that $R(\theta)x=\frac{y-t\theta}{\sqrt{t^2+r_{k-1}^2}}$ is also uniquely determined by $y$, and so is $x$, showing the injectivity of $\Phi$.

Let $t=|z_k-z_{k-1}|$ and $\theta= (z_k-z_{k-1})/t$. As $z_k-z_{k-1}$ is orthogonal to $U$, we have $\theta \in S_{U^\bot}$. The point $X_1$ lies inside the sphere of the space spanned by $U$ and $z_k-z_{k-1}$, centered at $z_k$, of radius $r_k = \sqrt{t^2+r_{k-1}^2} \in [a,b]$. Therefore, $X_1= t \theta + \sqrt{t^2+r_{k-1}^2}y$, where $y$ is some unit vector in $V(\theta)$, which can be written as $R(\theta)x$ for some $x\in V_0$. Hence, $X_1\in \Phi(A_{r_{k-1}})$. This completes the proof of the Fact and of the lemma.
 \end{proof}

We may now prove Proposition \ref{prop:density}. Fix $0<r_1<r_2$. We wish to show that, as $r_1$ and $r_2$ get closer, $\pi^\star_1 \mu(]r_1,r_2[)$ goes to $0$. By the Portemanteau Theorem, $\pi^\star_1 \mu([r_1,r_2]) \leq \liminf_n \pi^\star_1 \mu_n(]r_1,r_2[)$. It is shown in Lemma \ref{lem:LLNN_k} that this quantity is smaller than $cr_2^{2dq-1}|r_2-r_1|$, a quantity which converges to $0$ when $r_2$ goes to $r_1$. A similar proof holds for $\pi^\star_2 \mu$.


\subsection{Proof of Lemma \ref{lem:concentration_bound}}\label{sec:technical_lemma}
The lemma is based on an inequality of the Efron-Stein type, combined with Markov's inequality.

\begin{theorem}[Theorem 2 in \cite{boucheron2005moment}]\label{massart} Let $\X$ be a measurable set and $F:\X^n \to \R$ a measurable function. Define a $n$-sample $\X_n = \{X_1,\dots,X_n\}$ and let $Z = F(\X_n)$. If $\X_n'$ is an independent copy of $\X_n$, denote $Z_i' = F(X_1,\dots,X_{i-1},X_i',X_{i+1},\dots,X_n)$. Define 
\[ V = \sum_{i=1}^n E[(Z-Z_i')^2|\X_n]. \]
Then, for $p \geq 2$, there exists a constant $C_p$ depending only on $p$ such that
\[E[|Z-E[Z]|^p] \leq C_p E[V^{p/2}]. \]
\end{theorem}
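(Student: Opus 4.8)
The plan is to prove this polynomial moment version of the Efron--Stein inequality by the \emph{entropy method}, reducing everything to the classical ($p=2$) case. Write $\bar Z \defeq Z - E[Z]$ and, for a real exponent $p \ge 2$, set $M_p \defeq \big(E[|\bar Z|^p]\big)^{1/p}$. The quantity I would aim to establish, for \emph{every} real $p \ge 2$, is the bound
\[ M_p^2 \;\le\; \kappa\, p\, \|V\|_{p/2}, \]
with $\kappa$ an absolute constant; indeed this yields $E[|\bar Z|^p] = M_p^p \le (\kappa p)^{p/2}\,\|V\|_{p/2}^{p/2} = (\kappa p)^{p/2}\,E[V^{p/2}]$, which is exactly the asserted inequality with $C_p = (\kappa p)^{p/2}$. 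Treating $p$ as a continuous parameter throughout, no separate interpolation step is needed to reach non-integer exponents.

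First I would dispose of the base case $p=2$, which is the classical Efron--Stein inequality. By the tensorization (sub-additivity) of the variance, $\mathrm{Var}(Z) \le \sum_{i=1}^n E[\mathrm{Var}^{(i)}(Z)]$, where $\mathrm{Var}^{(i)}$ is the variance in the single coordinate $X_i$ with the others frozen; and since $X_i'$ is an independent copy of $X_i$, one has $\mathrm{Var}^{(i)}(Z) = \tfrac12\,E[(Z - Z_i')^2 \mid (X_j)_{j\ne i}]$. Summing over $i$ gives $M_2^2 = \mathrm{Var}(Z) \le \tfrac12 E[V] = \tfrac12\|V\|_1 \le \tfrac12 \|V\|_{p/2}$ by Jensen, which both settles $p=2$ and anchors the recursion used for larger exponents.

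For $p > 2$ the same philosophy is applied to a convex functional of $\bar Z$ of order $p$ rather than to the variance. Sub-additivity bounds the global functional by a sum of conditional functionals, each involving only the coordinate $X_i$; a pointwise calculus lemma then controls the $i$-th conditional term by $E[(Z - Z_i')^2 \mid \X_n]$ multiplied by a factor of order $p\,|\bar Z|^{p-2}$. Summing over $i$ produces a right-hand side of the form $\kappa\,p\,E[V\,|\bar Z|^{p-2}]$, and H\"older's inequality with exponents $\tfrac{p}{2}$ and $\tfrac{p}{p-2}$ splits this as $\kappa\,p\,\|V\|_{p/2}\,M_p^{p-2}$. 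Since the functional is itself comparable to $M_p^p$, dividing through by $M_p^{p-2}$ yields the target $M_p^2 \le \kappa p\,\|V\|_{p/2}$ (in implementations this is organized as a downward recursion $M_p^2 \le M_{p-2}^2 + \kappa p\,\|V\|_{p/2}$ anchored at the base case).

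The main obstacle is precisely this middle step, and it is genuine. For exponents $p>2$ the $\Phi$-entropy attached to $x\mapsto |x|^p$ does \emph{not} satisfy a clean tensorization inequality, so one must instead work with a symmetrized (or one-sided) functional and prove a sharp pointwise comparison keeping the dependence on $p$ linear, so that the final constant $C_p$ depends on $p$ alone. A tempting shortcut through the martingale differences $\Delta_i = E[Z - Z_i' \mid \mathcal{F}_i]$, with $\mathcal{F}_i = \sigma(X_1,\dots,X_i)$, and the Burkholder--Davis--Gundy inequality gives $M_p \le C_p\,\big\|\sum_i \Delta_i^2\big\|_{p/2}^{1/2}$ together with $\sum_i \Delta_i^2 \le \sum_i E[(Z-Z_i')^2 \mid \mathcal{F}_i]$; but here the conditioning is on $\mathcal{F}_i$ rather than on the full sample $\X_n$, so the resulting quantity is not directly comparable to $V$. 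Circumventing this conditioning mismatch is exactly what forces the use of the entropy method over the martingale route.
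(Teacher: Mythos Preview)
The paper does not prove this statement. Theorem~\ref{massart} is quoted verbatim as ``Theorem~2 in \cite{boucheron2005moment}'' and is used as a black box in the proof of Lemma~\ref{lem:concentration_bound}; immediately after the statement the paper writes ``We will apply Theorem~\ref{massart} to $F(\X_n) = \sum_{i=1}^n S(X_i,\X_n)$'' and proceeds to bound $E[V^{p/2}]$ for that specific functional. There is therefore nothing in the paper to compare your proposal against.

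As for the proposal itself: your outline is the correct high-level strategy of the original Boucheron--Lugosi--Massart argument (symmetrized Efron--Stein at $p=2$, then a tensorized entropy/difference inequality for higher moments leading to a recursion $M_p^2 \le M_{p-2}^2 + \kappa\,p\,\|V\|_{p/2}$, solved to give $C_p$ of order $p^{p/2}$). You have also correctly identified both the genuine obstacle (tensorization of $\Phi$-entropy fails for $|x|^p$ when $p>2$, forcing one to work with a modified/symmetrized functional and a careful pointwise inequality) and the reason the martingale/BDG route does not directly deliver $V$ with conditioning on the full sample $\X_n$. What remains a sketch rather than a proof is exactly the ``pointwise calculus lemma'' you invoke: in the original paper this is the step where one shows, for each coordinate, an inequality of the type
\[
E\big[\,|\bar Z|^p - |\bar Z_i'|^p \,\big|\, \X_n\big] \;\le\; c\,p\,|\bar Z|^{p-2}\,E\big[(Z-Z_i')^2 \,\big|\, \X_n\big]
\]
(after symmetrization and an appropriate convexity argument), and getting the conditioning to land on $\X_n$ rather than on a filtration is precisely the technical content. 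Your write-up names this step but does not carry it out, so as a proof it is incomplete; as a plan it is sound and faithful to the source.
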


Denote $\X_n^i = \X_n \backslash \{X_i\}$ and $S(X_i,\X_n) = \#(\X_n \cap C(X_i,R_{i,n}))^q \ones\{R_{i,n}\geq M\}$. We will apply Theorem \ref{massart} to $F(\X_n) = \sum_{i=1}^n S(X_i,\X_n)$. The quantity $(Z-Z_i')^2$ is bounded by $2(Z-Z_i)^2 + 2 (Z_i'-Z_i)^2$, where $Z_i = F(\X_n^i).$ For most $X_j$'s, $S(X_j,\X_n) = S(X_j,\X_n^i)$, and therefore $V$ can be efficiently bounded. More precisely,
\begin{align}
E&[V^{p/2}] = E\left[\left(\sum_{i=1}^n E[(Z-Z_i')^2|\X_n]\right)^{p/2} \right] \nonumber \\
&\leq n^{p/2} E[(Z-Z_n')^p] \tag{by Jensen's inequality}\nonumber \\[5pt]
&\leq n^{p/2} 2^{p-1} E[(Z-Z_n)^p + (Z'_n-Z_n)^p] \nonumber \\[5pt]
&= n^{p/2} 2^p E[(Z-Z_n)^p ] \mbox{ as } (Z,Z_n) \sim (Z'_n,Z_n)\nonumber  \\[5pt]
&= 2^pn^{p/2} E\left[\left(S(X_n,\X_n) + \sum_{j=1}^{n-1} (S(X_j,\X_n)- S(X_j,\X_{n-1}))\right)^p\right] \nonumber \\
&\leq2^pn^{p/2} 2^{p-1}\left( E[S(X_n,\X_n)^{p}] + E\left[\left(\sum_{j=1}^{n-1} (S(X_j,\X_n)- S(X_j,\X_{n-1}))\right)^p\right] \right). \label{ineqMassart}
\end{align}
Fix $p=3$. Lemma \ref{lem:bound_num_points} shows that for $p\geq 1$, $B_p = \sup_n E[S(X_n,\X_n)^p] <\infty$. Define $Y_j = (S(X_j,\X_n) - S(X_j,\X_{n-1}))$. Denote $G_j$ the event that $X_n \in C(X_j,R_{j,n-1})$. If $G_j$ is not realized, then $Y_j=0$. Expanding the product, 
\begin{align}
E\Bigg[\bigg(\sum_{j=1}^{n-1} (S(X_j,\X_n) - &S(X_j,\X_{n-1}))\bigg)^3\Bigg] \leq \sum_{j_1,j_2,j_3} E\left[\ones\{ G_{j_1} \cap G_{j_2} \cap G_{j_3}\} Y_{j_1}Y_{j_2}Y_{j_3}\right] \nonumber \\
&\leq \sum_{j_1,j_2,j_3} P\left( G_{j_1} \cap G_{j_2} \cap G_{j_3}\right)^{1/q'}E\left[(Y_{j_1}Y_{j_2}Y_{j_3})^{p'}\right]^{1/p'}, \label{eq:bound_1}
\end{align}
where$ \frac{1}{p'} + \frac{1}{q'} = 1$ and $p'\geq 1$ is some quantity to be fixed later. \\[5pt]
$\bullet$\;\; We first bound $E\left[(Y_{j_1}Y_{j_2}Y_{j_3})^{p'}\right]$. If $Y_{j_1} \neq 0$, then $R_{j_1,n-1}>M$. Therefore, \[E\left[(Y_{j_1}Y_{j_2}Y_{j_3})^{p'}\right] \leq \sqrt{P(R_{j_1,n-1}>M) E\left[(Y_{j_1}Y_{j_2}Y_{j_3})^{2p'}\right]}.\]
Also, $E\left[(Y_{j_1}Y_{j_2}Y_{j_3})^{2p'}\right] \leq E\left[Y_{j_1}^{6p'}\right] \leq B_{6p'}$, as $0 \le Y_{j_1} \le S(X_{j_1},\X_n)$. Therefore, using inequality \eqref{eq:unifBoundR}:
\[E\left[(Y_{j_1}Y_{j_2}Y_{j_3})^{p'}\right] \leq c\exp(-c M^d).\]
$\bullet$\;\; We now bound the probability $P( G_{j_1} \cap G_{j_2} \cap G_{j_3})$.\\[5pt]
If $j_1=j_2=j_3$, then it is clear that $P( G_{j_1} \cap G_{j_2} \cap G_{j_3}) \leq c/n$. However, in the general case, the joint law of the different $R_{j_i,n-1}$s becomes of interest. To ease the notation, assume that $j_i=i$ and denote $R_{i,n-1}$ simply by $R_i$. Also, define $D_{ij}$ the distance between $X_i$ and $X_j$. The fact that inequality \eqref{eq:unifBoundR} still holds conditionally on $X_1,X_2$ and $X_3$, and with the joint laws of $R_1, R_2$ and $R_3,$ will be repeatedly used.

\begin{lemma}\label{lem:prob_bound} The following bound holds:
\begin{equation} P(R_1 \geq t_1, R_2\geq t_2, R_3 \geq t_3| X_1,X_2,X_3) \leq c \exp\big(-cn(t_1^d+t_2^d+t_3^d)\big)
\end{equation}
\end{lemma}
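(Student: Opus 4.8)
The plan is to condition on $X_1,X_2,X_3$, reduce each event $\{R_i\ge t_i\}$ to the existence of a single empty cone emanating from $X_i$, and then bound the probability that three such cones are simultaneously empty by a union bound over the finitely many cone indices, controlling each term through the volume of the \emph{union} of the chosen cones. The mechanism is exactly the one behind the marginal bound \eqref{eq:unifBoundR}, applied to three points at once.

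First I would unwind the definition of $R_i=R(X_i,\X_{n-1})$. For each fixed cone index $j$, the predicate ``$A_j(X_i,\eta r)$ contains a point of $\X_{n-1}\setminus\{X_i\}$ or is not contained in $[0,1]^d$'' is monotone in $r$, so $R_i$ is the largest of the $Q$ thresholds at which the individual cones become admissible. Consequently, up to a null event (no point lies exactly on the measure-zero boundary of a cone), $\{R_i\ge t_i\}$ coincides with the union over $j$ of the events $\{A_j(X_i,\eta t_i)\subseteq[0,1]^d \text{ and } A_j(X_i,\eta t_i)\cap(\X_{n-1}\setminus\{X_i\})=\emptyset\}$. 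In particular, on $\{R_i\ge t_i\}$ there is a witnessing cone $C_i$ that is \emph{wholly contained} in the cube, so it carries its full geometric volume $\mathrm{Vol}(C_i)\ge c_0\,t_i^d$, with $c_0=c_0(d,\delta,\eta)>0$.

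Next, conditionally on $X_1,X_2,X_3$ the remaining points $X_4,\dots,X_{n-1}$ are i.i.d.\ with density $\kappa$, so for a fixed triple of indices $(j_1,j_2,j_3)$ the cones $C_1,C_2,C_3$ are deterministic, and the probability that all three avoid $X_4,\dots,X_{n-1}$ is $\bigl(1-\int_{C_1\cup C_2\cup C_3}\kappa\bigr)^{n-4}\le \exp\bigl(-c\,n\,\mathrm{Vol}(C_1\cup C_2\cup C_3)\bigr)$, using $\inf\kappa>0$ and $n-4\ge n/2$ for $n$ large. The key geometric observation is that overlaps never hurt too much: since the union contains the largest single cone,
\[
\mathrm{Vol}(C_1\cup C_2\cup C_3)\ \ge\ \max_i \mathrm{Vol}(C_i)\ \ge\ c_0\max_i t_i^d\ \ge\ \frac{c_0}{3}\,(t_1^d+t_2^d+t_3^d),
\]
the last step being the elementary fact that the maximum of three nonnegative numbers dominates their average. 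Summing over the at most $Q^3$ choices of $(j_1,j_2,j_3)$ then gives $P(R_1\ge t_1,R_2\ge t_2,R_3\ge t_3\mid X_1,X_2,X_3)\le Q^3\exp\bigl(-c\,n(t_1^d+t_2^d+t_3^d)\bigr)$, which is the claim (recall the two occurrences of $c$ are allowed to differ).

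The only genuine obstacle is the overlap of the three cones, which is what makes the naive independence heuristic fail and what the remark preceding the lemma alludes to when it mentions the joint law of $R_1,R_2,R_3$: when $X_1,X_2,X_3$ are close and the $t_i$ comparable, the witnessing cones can nearly coincide and the union volume collapses to that of a single cone. The $\max\ge\text{average}$ bound dissolves this difficulty with no case analysis on the pairwise distances $D_{ij}$, at the harmless cost of a factor $1/3$ in the exponent; a sharper but far more tedious route would instead order the $t_i$ and split on the magnitudes of the $D_{ij}$ to extract three disjoint empty regions. The remaining points are routine: the monotonicity reduction for $R_i$, the measure-zero boundary issue, and the verification that the witnessing cone, being forced inside the cube, has volume bounded below by $c_0\,t_i^d$.
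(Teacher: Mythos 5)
Your proof is correct and follows essentially the same route as the paper's: the paper simply keeps the event with the largest $t_i$, invokes the conditional version of the marginal tail bound \eqref{eq:unifBoundR}, and concludes via $\max_i t_i^d \geq \tfrac{1}{3}(t_1^d+t_2^d+t_3^d)$ --- exactly the max-dominates-average step at the heart of your argument. Your union-of-cones volume bound is only ever used through the volume of the single largest witnessing cone, so it amounts to re-deriving the conditional marginal bound inline rather than citing it; the decisive inequality and the conclusion are identical.
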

\begin{proof}
Suppose that $\max t_i = t_1$. Inequality \eqref{eq:unifBoundR} states that $P(R_1 \geq t_1) \leq c \exp(-cnt_1^d)$. Likewise, it is straightforward to show that a similar bound holds conditionally on $X_1,X_2$ and $X_3$. As $t_1^d \geq \frac{t_1^d+t_2^d+t_3^d}{3}$, the result follows.
  \end{proof}

Let us prove that $P( G_{1} \cap G_{2}) \leq c/n^2$. If the event is realized, then $X_n$ is in the intersection of $C(X_1,R_1)$ and $C(X_2,R_2)$. Therefore, this intersection is non empty and $D_{12} \leq \sqrt{d}(R_1+R_2) $. Hence,
\begin{align*}
 P( G_{1} \cap G_{2}) &\leq P(D_{12} \leq \sqrt{d}(R_1+R_2)  \mbox{ and } X_n \in C(X_{1},R_{1})  \cap 
 C(X_{2},R_{2})) \\
 &\leq 2 P(D_{12} \leq 2\sqrt{d}R_1 \mbox{ and } X_n \in C(X_{1},R_1)) \\
 &= 2 E\left[ \ones\{D_{12} \leq 2\sqrt{d}R_1\} P(X_n \in C(X_{1},R_1) | \X_{n-1})\right]\\
 &\leq 2c E\left[ \ones\{D_{12} \leq 2\sqrt{d}R_1\} R_1^d \right] \\
 &\leq 2c E\left[ \int_{\left( D_{12}/(2\sqrt{d}) \right)^d} P(R_{1,n-1}^d\geq t | X_1,X_2)dt \right] \\
  &\leq 2c E\left[ \int_{\left( D_{12}/(2\sqrt{d}) \right)^d} \exp(-cnt) dt \right] \\
  &\leq \frac{c}{n} E\left[ \exp(-cnD_{12}^d)\right] \\
  &= \frac{c}{n} \int_0^1 P(cnD_{12}^d \leq - \ln(t)) dt \\
  &\leq \frac{c}{n} \int_0^1 \frac{- \ln(t)}{cn} dt = \frac{c}{n^2}.\\
\end{align*}
Finally, we bound $P( G_{1} \cap G_{2} \cap G_3)$. If the event is realized, then 
\[
\begin{array}{ccc}
\left\lbrace \begin{array}{c}
D_{12} \leq \sqrt{d}(R_1+R_2) \\
 D_{23} \leq \sqrt{d}(R_2+R_3) \\ 
D_{13} \leq \sqrt{d}(R_1+R_3) 
\end{array} \right.
& \implies & 
\left\lbrace \begin{array}{c}
D_{12} \leq 2\sqrt{d}R_1 \text{ or } D_{12} \leq 2\sqrt{d}R_2 \\
D_{23} \leq 2\sqrt{d}R_2 \text{ or } D_{23} \leq 2\sqrt{d}R_3 \\
D_{13} \leq 2\sqrt{d}R_1 \text{ or } D_{13} \leq 2\sqrt{d}R_3
\end{array} \right.
\end{array}
\]
This last event is an union of eight events. Each of these event is either bounded by an event of the form ($D_{12} \leq 2\sqrt{d}R_1$ and $D_{13} \leq 2\sqrt{d}R_1$) (six events), or by an event of the form ($D_{12} \leq 2\sqrt{d}R_1$ and $D_{23} \leq 2\sqrt{d}R_2$ and $D_{13} \leq 2\sqrt{d}R_3$) (two events). Using this, we obtain
\begin{align*}
P( &G_{1} \cap G_{2} \cap G_3) \\
&\leq 6P(X_n \in C(X_1,R_1) \mbox{ and } D_{12} \leq 2\sqrt{d}R_1 \mbox{ and } D_{13} \leq 2\sqrt{d}R_1) \\
&  + 2P(X_n \in C(X_1,R_1) \mbox{ and } D_{12} \leq 2\sqrt{d}R_1 \mbox{ and } D_{23} \leq 2\sqrt{d}R_2 \mbox{ and } D_{13} \leq 2\sqrt{d}R_3) \\
&\leq c E[R_1^d\ones\{ D_{12} \leq 2\sqrt{d}R_1 \mbox{ and } D_{13} \leq 2\sqrt{d}R_1\} ] \\
& +c E[R_1^d \ones\{ D_{12} \leq 2\sqrt{d}R_1 \mbox{ and } D_{23} \leq 2\sqrt{d}R_2 \mbox{ and } D_{13} \leq 2\sqrt{d}R_3 \}] \\
&= c E\left[ \int_{\left(\frac{\max(D_{12},D_{13})}{2\sqrt{d}}\right)^d}^{\infty} P(R_1^d\geq u| X_1,X_2,X_3)du \right] \\
&+ cE\left[ \int_{\left(\frac{D_{12}}{2\sqrt{d}}\right)^d}^{\infty} P\left(\left.R_1^d\geq u \mbox{ and } R_2 \geq \frac{D_{23}}{2\sqrt{d}} \mbox{ and } R_3 \geq \frac{D_{13}}{2\sqrt{d}} \right| X_1,X_2,X_3 \right)du \right] \\
&\leq c E\left[ \int_{\left(\frac{\max(D_{12},D_{13})}{2\sqrt{d}}\right)^d}^{\infty} e^{-cnu}du \right] + cE\left[ \int_{\left(\frac{D_{12}}{2\sqrt{d}}\right)^d}^{\infty} e^{\left(-cn(u+D_{23}^d+D_{13}^d)\right)} du \right] \\
&=\frac{c}{n} E\left[e^{-cn\max(D_{12},D_{13})^d} \right] + \frac{c}{n} E\left[ e^{-cn(D_{12}^d+D_{23}^d+D_{13}^d)} \right] \\
&\leq 2\frac{c}{n} E\left[e^{-cn\max(D_{12},D_{13})^d} \right] = \frac{c}{n} \int_0^1 P\left(\max(D_{12},D_{13})^d \leq \frac{-\ln(t)}{cn} \right)dt \\
&\leq \frac{c}{n} \int_0^1 \left( \frac{-\ln(t)}{cn} \right)^2 dt = \frac{c}{n^3}.
\end{align*}

Finally, inequality \eqref{eq:bound_1} becomes
\begin{align*}
E[V^{3/2}] &\leq c n^{3/2}(\exp(-cM^d) + \exp(-cM^d)(n^{1-1/q'} + n^{2-2/q'} + n^{3-3/q'}) \\
&\leq c n^{3/2+3(1-1/q')}\exp(-cM^d).
\end{align*} 
Choose $p'=3/\varepsilon$ and apply Markov inequality to conclude.


\bibliographystyle{abbrv}
\bibliography{biblio}   

\end{document}